\renewcommand{\subjclass}[1]{\thanks{\emph{2010 Mathematics Subject Classification:}~#1}}
\renewcommand{\keywords}[1]{\thanks{\emph{Keywords and Phrases:}~#1}}
\renewcommand{\date}{\thanks{\today}}
\newtheorem{theorem}{Theorem}[section]
\newtheorem{corollary}[theorem]{Corollary}
\newtheorem{lemma}[theorem]{Lemma}
\newtheorem{proposition}[theorem]{Proposition}
\theoremstyle{definition}
\theoremstyle{remark}
\numberwithin{equation}{section}
\newcommand{\xv}{{\bf x}}
\newcommand{\yv}{{\bf y}}
\newcommand{\Yv}{{\bf Y}}
\newcommand{\zv}{{\bf z}}
\newcommand{\av}{{\bf a}}
\newcommand{\bv}{{\bf b}}
\newcommand{\uv}{{\bf u}}
\newcommand{\vv}{{\bf v}}
\newcommand{\nullv}{{\bf 0}}
\newcommand{\kv}{{\bf k}}
\renewcommand{\AA}{\mathcal{A}}
\newcommand{\NN}{\mathcal{N}}
\newcommand{\GG}{\mathcal{G}}
\newcommand{\FF}{\mathcal{F}}
\newcommand{\VV}{\mathcal{V}}
\newcommand{\PP}{\mathcal{P}}
\newcommand{\HH}{\mathcal{H}}
\newcommand{\WW}{\mathcal{W}}
\renewcommand{\SS}{\mathcal{S}}
\newcommand{\ve}{\varepsilon}
\newcommand{\fp}{\mathfrak{p}}
\newcommand{\fa}{\mathfrak{a}}
\newcommand{\kdots}{,\ldots ,}
\newcommand{\Qq}{\mathbb{Q}}
\newcommand{\OQq}{\overline{\mathbb{Q}}}
\newcommand{\Zz}{\mathbb{Z}}
\newcommand{\GL}{{\rm GL}}
\newcommand{\half}{\mbox{$\textstyle{\frac{1}{2}}$}}
\newcommand{\textfrac}[2]{\mbox{$\textstyle{\frac{#1}{#2}}$}}
\renewcommand{\gcd}{{\rm gcd}}
\newcommand{\expr}{\exp O(r)}
\newcommand{\Deg}{\overline{{\rm deg}}\,}
\newcommand{\Height}{\overline{h}}
\newcommand\irS{\varphi}
\title[Unit equations over finitely generated domains]
{Effective results for unit equations over finitely generated domains}
\subjclass{Primary 11D61; Secondary: 11J86} 
\keywords{Unit equations, finitely generated domains, effective finiteness results}
\date
\author[J.-H. Evertse]{Jan-Hendrik Evertse}
\address{J.-H. Evertse \newline
         \indent Universiteit Leiden, Mathematisch Instituut, \newline
         \indent Postbus 9512, 2300 RA Leiden, The Netherlands}
\email{evertse\char'100math.leidenuniv.nl}
\author[K. Gy\H{o}ry]{K\'{a}lm\'{a}n Gy\H{o}ry}
\address{K. Gy\H{o}ry \newline
         \indent Institute of Mathematics, University of Debrecen \newline
         \indent Number Theory Research Group, Hungarian Academy of Sciences and \newline
         \indent University of Debrecen \newline
         \indent H-4010 Debrecen, P.O. Box 12, Hungary}
\email{gyory\char'100science.unideb.hu}
\begin{document}

\maketitle

\begin{abstract}
Let $A\supset \Zz$ be a commutative domain which is finitely
generated over $\Zz$ as a $\Zz$-algebra and let $a,b,c$ be non-zero elements of $A$.
Extending earlier work of Siegel \cite[1921]{Sie21}, 
Mahler \cite[1933]{Mah33} and Parry \cite[1950]{Par50},
Lang \cite[1960]{Lang60} proved
that the equation (*) $a\ve +b\eta =c$ in $\ve ,\eta\in A^*$ 
has only finitely many solutions.
Using Baker's theory of logarithmic forms,
Gy\H{o}ry \cite[1974]{Gy74}, \cite[1979]{Gy79} proved that the solutions
of (*) can be determined effectively if $A$ is contained in an algebraic
number field.
In this paper we prove, in a precise quantitative form,
an effective finiteness result for equations (*)
over an arbitrary domain $A$ of characteristic $0$
which is finitely generated over $\Zz$.
Our main tools are already existing effective finiteness results
for (*) over number fields and function fields,
an effective specialization argument of Gy\H{o}ry 
\cite[1983]{Gy83}, \cite[1984]{Gy84}, 
and effective results of Seidenberg \cite[1974]{Sei74}
and Aschenbrenner \cite[2004]{Asc04} on linear equations 
over polynomial rings. 
We prove also an effective result for the
exponential equation 
$a\gamma_1^{v_1}\cdots \gamma_s^{v_s}+b\gamma_1^{w_1}\cdots \gamma_s^{w_s}=c$
in integers $v_1\kdots w_s$, where $a,b,c$ and $\gamma_1\kdots\gamma_s$
are non-zero elements of $A$.
\end{abstract}   

\section{Introduction}\label{1}

Let $A\supset\Zz$ be a commutative domain which is finitely generated over $\Zz$
as a $\Zz$-algebra.
As usual, we denote by $A^*$ the unit group of $A$.
We consider equations
\begin{equation}\label{1.1}
a\ve +b\eta =c\ \ \mbox{in } \ve ,\eta\in A^*
\end{equation}
where $a,b,c$ are non-zero elements of $A$. 
Such equations, usually called \emph{unit equations}, have a great number of applications.
For instance, the ring of $S$-integers in 
an algebraic number field is finitely generated over $\Zz$, so 
the $S$-unit equation in two unknowns is a special case of \eqref{1.1}.
In this paper, we consider equations \eqref{1.1} in the general case,
where $A$ may contain transcendental elements, too.

Siegel \cite[1921]{Sie21} proved that \eqref{1.1} has only finitely many solutions in the case that 
$A$ is the ring of integers of a number field,
and Mahler \cite[1933]{Mah33} did this in the case that $A =\Zz [1/p_1\cdots p_t]$ for certain primes
$p_1\kdots p_t$. 
For $S$-unit equations over number fields, the finiteness of the number of solutions of \eqref{1.1}
follows from work of Parry \cite[1950]{Par50}. 
Finally, Lang \cite[1960]{Lang60} proved for arbitrary finitely generated
domains $A$ that \eqref{1.1} has only finitely many solutions. 
The proofs of all these results are ineffective.

Baker \cite[1968]{Bak68} and Coates \cite[1968/69]{Coa68/69} implicitly proved effective finiteness
results for certain special ($S$-)unit equations. 
Later, Gy\H{o}ry \cite[1974]{Gy74}, \cite[1979]{Gy79} showed, 
in the case that $A$ is the ring of $S$-integers in a number field,
that the solutions of \eqref{1.1} can be determined effectively in principle.
His proof is based on estimates for linear forms in ordinary and $p$-adic logarithms of algebraic numbers.
In his papers \cite[1983]{Gy83} and \cite[1984]{Gy84}, Gy\H{o}ry introduced an
effective specialization argument, and he used this to establish effective finiteness results
for decomposable form equations and discriminant equations over a wide class
of finitely generated domains $A$ containing both algebraic and transcendental elements,
of which the elements have some ``good" effective representations. 
His results contain as a special case an effective finiteness result for equations \eqref{1.1}
over these domains. Gy\H{o}ry's method of proof could not be extended to arbitrary
finitely generated domains $A$.

It is the purpose of this paper
to prove an effective finiteness result for \eqref{1.1} 
over arbitrary finitely generated domains $A$. In fact, we give a quantitative
statement, with effective upper bounds for the ``sizes" of the solutions 
$\ve, \eta$. The main new ingredient of our proof is an effective result by 
Aschenbrenner \cite[2004]{Asc04} on systems of linear equations over polynomial
rings over $\Zz$.

We introduce the notation used in our theorems. Let again $A\supset \Zz$ be a
commutative domain which is finitely generated over $\Zz$,
say $A=\Zz [z_1\kdots z_r]$. Let $I$ be the ideal of polynomials
$f\in\Zz [X_1\kdots X_r]$ such that $f(z_1\kdots z_r)=0$. Then $I$ is finitely generated, hence
\begin{equation}\label{1.2}
A\cong \Zz [X_1\kdots X_r]/I,\ \ I=(f_1\kdots f_m)
\end{equation}
for some finite set of polynomials $f_1\kdots f_m\in\Zz [X_1\kdots X_r]$. 
We observe here that given $f_1\kdots f_m$,
it can be checked effectively whether $A$ is a domain
containing $\Zz$. Indeed, this holds if and only if  
$I$ is a prime ideal of $\Zz [X_1\kdots X_r]$ with $I\cap\Zz =(0)$, and the latter 
can be checked effectively for instance using Aschenbrenner \cite[Prop. 4.10, Cor. 3.5]{Asc04}.

Denote by $K$ the quotient field of $A$. For $\alpha \in A$, we call $f$ a \emph{representative} 
for $\alpha$, or say that $f$ represents $\alpha$ 
if $f\in\Zz [X_1\kdots X_r]$ and $\alpha =f(z_1\kdots z_r)$.
Further, for $\alpha\in K$, we call $(f,g)$ a \emph{pair of representatives} for $\alpha$
or say that $(f,g)$ represents $\alpha$ 
if $f,g\in\Zz [X_1\kdots X_r]$, $g\not\in I$ and
$\alpha =f(z_1\kdots z_r)/g(z_1\kdots z_r)$.
We say that $\alpha\in A$ (resp. $\alpha\in K$) is given if a representative
(resp. pair of representatives) for $\alpha$ is given.

To do explicit computations in $A$ and $K$, one needs
an \emph{ideal membership algorithm}
for $\Zz [X_1\kdots X_r]$, that is an algorithm which decides for any given
polynomial and ideal of $\Zz [X_1\kdots X_r]$ whether the polynomial belongs to the ideal.
In the literature there are various such algorithms; we mention only the algorithm
of Simmons \cite[1970]{Sim70}, and the more precise algorithm of Aschenbrenner \cite[2004]{Asc04}
which plays an important role in our paper; see Lemma \ref{le:2.5} below
for a statement of his result.   
One can perform arithmetic operations on $A$ and $K$ by 
using representatives. 
Further, one can decide effectively whether
two polynomials $f_1,f_2$ represent the same element of $A$, i.e., $f_1-f_2\in I$, 
or whether two pairs of polynomials $(f_1,g_1),(f_2,g_2)$ represent the same
element of $K$, i.e., $f_1g_2-f_2g_1\in I$, by using one of the ideal membership algorithms
mentioned above.  

The \emph{degree} $\deg f$ of a polynomial $f\in\Zz [X_1\kdots X_r]$ is by definition
its total degree. By the \emph{logarithmic height} $h(f)$ of $f$ we mean
the logarithm of the maximum of the absolute values of its coefficients.
The \emph{size} of $f$ is defined by $s(f):=\max (1,\deg f,h(f))$.
Clearly, there are only finitely many polynomials in $\Zz [X_1\kdots X_r]$ of size below
a given bound, and these can be determined effectively.

\begin{theorem}\label{th:1.1}
Assume that $r\geq 1$. 
Let $\widetilde{a},\widetilde{b},\widetilde{c}$ be representatives for $a,b,c$, respectively. 
Assume that
$f_1\kdots f_m$ and $\widetilde{a},\widetilde{b},\widetilde{c}$ all have degree at most $d$
and logarithmic height at most $h$, where $d\geq 1$, $h\geq 1$.
Then for each solution $(\ve ,\eta )$ of \eqref{1.1}, there are representatives
$\widetilde{\ve},\widetilde{\ve}',\widetilde{\eta},\widetilde{\eta}'$ of 
$\ve,\ve^{-1},\eta,\eta^{-1}$,
respectively, such that
\[
s(\widetilde{\ve}),\, s(\widetilde{\ve}'),\, s(\widetilde{\eta}),\, s(\widetilde{\eta}')
\leq \exp \Big( (2d)^{c_1^r}(h+1)\Big),
\]
where $c_1$ is an effectively computable absolute constant $>1$.
\end{theorem}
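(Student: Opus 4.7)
The plan is to combine effective unit equation bounds over number fields and function fields via Gy\H{o}ry's specialization argument \cite{Gy83,Gy84}, and then to extract polynomial representatives of controlled size using Aschenbrenner's quantitative ideal-membership theorem (Lemma \ref{le:2.5}).

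First I would put $A$ into a normal form: choose a transcendence basis $\{t_1\kdots t_q\}\subset\{z_1\kdots z_r\}$ of $K/\Qq$ and a primitive element $\theta$ for $K$ over $L:=\Qq(t_1\kdots t_q)$ that is integral over $A_0:=\Zz[t_1\kdots t_q]$. The minimal polynomial of $\theta$ over $A_0$, expressions of each $z_i$ in terms of $t_1\kdots t_q,\theta$, and revised generators of $I$ would be produced effectively with explicit degree and height bounds in $d,h$, using effective elimination in $\Zz[X_1\kdots X_r]$ (for which Aschenbrenner's bounds again supply the needed control). The remaining work then takes place in the tower $A_0[\theta]\subset A$.

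The core of the argument is a two-tier estimate for $\ve,\eta$. On the function-field side, viewing $\ve,\eta$ as elements of the transcendental extension $L/\Qq$ and iteratively applying effective Mason-type finiteness results for unit equations in one variable at a time, one bounds the total $t$-degrees of polynomial representatives of $\ve,\eta$ and their inverses by a polynomial in $d$. On the number-field side, for each specialization $\sigma:(t_1\kdots t_q)\mapsto(\alpha_1\kdots\alpha_q)\in\OQq^q$ chosen outside an explicit bad locus (the discriminant locus of $\theta$'s minimal polynomial together with the zero sets of $a,b,c,\ve,\eta$), the image of \eqref{1.1} is a unit equation in a number field $K_\sigma$ of controllably bounded degree and discriminant, and Gy\H{o}ry's effective theorem \cite{Gy74,Gy79} yields explicit height bounds for $\sigma(\ve),\sigma(\eta)$. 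Evaluating at a sufficiently rich grid of such specializations and inverting the resulting Vandermonde-type interpolation system in the $t$-coordinates recovers bounds on the coefficients of polynomial representatives of $\ve,\eta$ in $A_0[\theta]$.

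Finally, converting these $A_0[\theta]$-level bounds into a size bound for a representative $\widetilde\ve\in\Zz[X_1\kdots X_r]$ modulo $I$, and producing a representative $\widetilde\ve'$ of $\ve^{-1}$ satisfying $\widetilde\ve\,\widetilde\ve'\equiv 1\pmod{I}$, both reduce to solving linear equations over $\Zz[X_1\kdots X_r]$ with unknowns controlled modulo $I$; Aschenbrenner's theorem then furnishes the explicit degree and height bounds needed to reach $s\leq\exp((2d)^{c_1^r}(h+1))$. I expect the hardest step to be making the specialization argument quantitatively tight: one must simultaneously control the heights of the chosen $\alpha_i$, the degree and discriminant of each $K_\sigma$, and the distance of $\sigma$ from the bad locus, while keeping the final bound at the required double-exponential shape. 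The factor $c_1^r$ in the exponent reflects the cost of stripping off the $r$ generators of $A$ one at a time through the normalization and specialization, so avoiding a worse bound such as $(2d)^{c_1^{r^2}}$ demands disciplined uniformity at each layer rather than a new ingredient.
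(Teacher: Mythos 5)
Your plan is essentially the paper's argument: pass to $B=A_0[y,f^{-1}]\supseteq A$ with a primitive element $y$ over $A_0=\Zz[z_1\kdots z_q]$, bound the degrees of $\ve_1,\eta_1\in B^*$ by applying Mason's function-field $S$-unit theorem one transcendental generator at a time, bound the heights via many integer specializations $\uv\in\Zz^q$ together with the Győry--Yu $S$-unit bound over number fields, and recover explicit representatives in $\Zz[X_1\kdots X_r]$ from Aschenbrenner's quantitative ideal-membership theorem. The one step that needs more care than your ``Vandermonde-type interpolation'' suggests is the recovery of $\Height(\alpha)$ from the heights of the specialized images $\alpha_j(\uv)$: the denominator $Q$ in $\alpha=Q^{-1}\sum P_i y^i$ may carry rational prime factors, so the interpolation estimate must be carried out at every place of $\Qq$ and not only the archimedean one (this is the content of Lemma~\ref{le:5.4} feeding into Lemma~\ref{le:5.6}), without which the $p$-adic part of $\Height(\alpha)$ would be lost.
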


By a theorem of Roquette \cite[1958]{Roq58}, 
the unit group of a domain finitely generated over $\Zz$
is finitely generated. In the case that $A=O_S$ is the ring of $S$-integers of a number field
it is possible to determine effectively a system of generators for $A^*$, and this was used by
Gy\H{o}ry in his effective finiteness proof for \eqref{1.1} with $A=O_S$.
However, no general algorithm is known to determine a system of generators for the unit group
of an arbitrary finitely generated domain $A$.
In our proof of Theorem \ref{th:1.1}, we do not need any information on the generators of $A^*$.

By combining Theorem \ref{th:1.1} with an ideal membership algorithm for \\
$\Zz [X_1\kdots X_r]$,
one easily deduces the following:

\begin{corollary}\label{th:1.2}
Given $f_1\kdots f_m, a,b,c$, the solutions of \eqref{1.1} can be determined effectively.
\end{corollary}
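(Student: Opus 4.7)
The plan is to reduce the corollary to a finite search combined with an ideal membership test. First, suppose $r\geq 1$, and let $d,h$ be upper bounds for the degrees and logarithmic heights of $f_1,\ldots,f_m$ together with given representatives $\widetilde{a},\widetilde{b},\widetilde{c}$ of $a,b,c$; these bounds can be read off from the input. Set
\[
B:=\exp\bigl((2d)^{c_1^r}(h+1)\bigr),
\]
where $c_1$ is the constant from Theorem \ref{th:1.1}. There are only finitely many polynomials in $\Zz[X_1,\ldots,X_r]$ of size at most $B$, and the finite list can be written down effectively.

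Next, I would loop over all $4$-tuples $(\widetilde{\ve},\widetilde{\ve}',\widetilde{\eta},\widetilde{\eta}')$ of polynomials from this finite list. For each tuple I invoke an ideal membership algorithm for $\Zz[X_1,\ldots,X_r]$, e.g.\ that of Aschenbrenner \cite{Asc04} recalled in Lemma \ref{le:2.5}, to test the three conditions
\[
\widetilde{\ve}\cdot\widetilde{\ve}'-1\in I,\qquad
\widetilde{\eta}\cdot\widetilde{\eta}'-1\in I,\qquad
\widetilde{a}\,\widetilde{\ve}+\widetilde{b}\,\widetilde{\eta}-\widetilde{c}\in I.
\]
The first two conditions guarantee that the classes $\ve,\eta$ of $\widetilde{\ve},\widetilde{\eta}$ modulo $I$ lie in $A^*$, with inverses represented by $\widetilde{\ve}',\widetilde{\eta}'$; the third says that $(\ve,\eta)$ satisfies \eqref{1.1}. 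By Theorem \ref{th:1.1}, every solution of \eqref{1.1} arises from some tuple in this loop, so the surviving tuples yield, via $f_1,\ldots,f_m$ modulo $I$, exactly the set of solutions $(\ve,\eta)$ up to equality in $A$. Finally, distinct representations of the same solution can be collapsed by pairwise ideal membership tests $\widetilde{\ve}_1-\widetilde{\ve}_2\in I$, etc.

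For the boundary case $r=0$ excluded by Theorem \ref{th:1.1} we have $A=\Zz$, so $A^*=\{\pm 1\}$ and there are only four tuples $(\ve,\eta)$ to test. Thus the algorithm is complete in all cases. The only nontrivial ingredient is the effective ideal membership algorithm, which is available as cited; everything else reduces to a bounded search. Hence this part is routine once Theorem \ref{th:1.1} and Lemma \ref{le:2.5} are in hand, so no real obstacle remains beyond those two inputs.
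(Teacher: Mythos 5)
Your proof matches the paper's argument essentially verbatim: enumerate all quadruples of polynomials of size bounded by the quantity from Theorem \ref{th:1.1}, test the three ideal membership conditions from \eqref{1.3} using Aschenbrenner's algorithm, and collapse duplicate representatives with further ideal membership tests. Your explicit handling of the $r=0$ case is a small addition the paper omits (it imposes $r\geq 1$ throughout), but otherwise the two proofs coincide.
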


\begin{proof}
Clearly, $\ve ,\eta$ is a solution of \eqref{1.1} if and only if there 
are polynomials 
$\widetilde{\ve },\widetilde{\ve}',\widetilde{\eta},\widetilde{\eta}'\in\Zz [X_1\kdots X_r]$ 
such that $\widetilde{\ve},\widetilde{\eta}$ represent $\ve ,\eta$, and
\begin{equation}\label{1.3}
\widetilde{a}\cdot \widetilde{\ve}+\widetilde{b}\cdot \widetilde{\eta}-\widetilde{c},\ \,
\widetilde{\ve}\cdot \widetilde{\ve}' -1,\ \, \widetilde{\eta}\cdot\widetilde{\eta}' -1\in I.
\end{equation}
Thus, we obtain all solutions of  \eqref{1.1}
by checking, for each quadruple of polynomials 
$\widetilde{\ve },\widetilde{\ve}',\widetilde{\eta},\widetilde{\eta}'\in\Zz [X_1\kdots X_r]$ 
of size at most $\exp \Big( (2d)^{c_1^r}(h+1)\Big)$ whether it satisfies \eqref{1.3}.
Further, using the ideal membership algorithm,
it can be checked effectively whether two different pairs 
$(\widetilde{\ve},\widetilde{\eta})$ represent the same solution of \eqref{1.1}.
Thus, we can make a list of representatives, one for each solution of \eqref{1.1}.
\end{proof}

Let $\gamma_1\kdots\gamma_s$ be multiplicatively independent elements of $K^*$
(the multiplicative independence of $\gamma_1\kdots\gamma_s$ can be checked effectively for instance
using Lemma \ref{le:7.2} below). Let again $a,b,c$ be non-zero elements of $A$ and consider the equation
\begin{equation}\label{1.4}
a\gamma_1^{v_1}\cdots\gamma_s^{v_s}+b\gamma_1^{w_1}\cdots\gamma_s^{w_s}=c\ \ \mbox{in } 
v_1\kdots v_s,\, w_1\kdots w_s\in\Zz .
\end{equation}

\begin{theorem}\label{th:1.3}
Let $\widetilde{a},\widetilde{b},\widetilde{c}$
be representatives for $a,b,c$ and for $i=1\kdots s$, let $(g_{i1},g_{i2})$ be a pair
of representatives for $\gamma_i$.
Suppose that
$f_1\kdots f_m$, $\widetilde{a},\widetilde{b},\widetilde{c}$, and
$g_{i1},g_{i2}$ ($i=1\kdots s$)
all have degree at most $d$ and logarithmic height at most $h$, where $d\geq 1$, $h\geq 1$.
Then for each solution $(v_1\kdots w_s)$ of \eqref{1.4} we have
\[
\max \big(|v_1|\kdots |v_s|,\, |w_1|\kdots |w_s|\big)\leq \exp \Big( (2d)^{c_2^{r+s}}(h+1)\Big),
\]
where $c_2$ is an effectively computable absolute constant $>1$.
\end{theorem}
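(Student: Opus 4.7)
The plan is to reduce Theorem \ref{th:1.3} to Theorem \ref{th:1.1}, applied to an enlargement of $A$ in which the $\gamma_i$ become units. Put
\[
\ve:=\gamma_1^{v_1}\cdots\gamma_s^{v_s},\qquad \eta:=\gamma_1^{w_1}\cdots\gamma_s^{w_s},
\]
and work in the finitely generated subring $B:=A[\gamma_1,\gamma_1^{-1}\kdots\gamma_s,\gamma_s^{-1}]$ of $K$. Then $B$ is a domain (as a subring of $K$), $\ve,\eta\in B^*$, and $a\ve +b\eta =c$ is a unit equation of type \eqref{1.1} over $B$.

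To apply Theorem \ref{th:1.1}, I present $B$ as $\Zz [z_1\kdots z_{r+2s}]$ by adjoining $2s$ new generators $z_{r+i}$ and $z_{r+s+i}$ representing $\gamma_i$ and $\gamma_i^{-1}$ respectively, for $i=1\kdots s$. The defining ideal of $B$ is then generated by $f_1\kdots f_m$ together with the $2s$ relations
\[
g_{i2}(X_1\kdots X_r)X_{r+i}-g_{i1}(X_1\kdots X_r),\qquad X_{r+i}X_{r+s+i}-1\qquad (i=1\kdots s),
\]
all of degree at most $d+1\leq 2d$ and logarithmic height at most $h$. Theorem \ref{th:1.1} applied with parameters $r':=r+2s$, $d':=2d$, $h':=h$ then produces representatives $\widetilde{\ve},\widetilde{\eta}\in \Zz [X_1\kdots X_{r+2s}]$ for $\ve,\eta$ of size at most $\exp\bigl((4d)^{c_1^{r+2s}}(h+1)\bigr)$, which is readily absorbed into a bound of the stated shape $\exp\bigl((2d)^{c_2^{r+s}}(h+1)\bigr)$ by choosing the absolute constant $c_2$ sufficiently large in terms of $c_1$ (e.g.\ $c_2:=c_1^{3}$, after a short check of the exponent inequality).

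The remaining step, which is the main obstacle, is to translate this polynomial-size bound on a representative of $\ve =\prod \gamma_i^{v_i}$ into a bound on $\max_i|v_i|$, and analogously for $\eta$. This is precisely the content of Lemma \ref{le:7.2}, the effective quantitative counterpart of the multiplicative independence of $\gamma_1\kdots\gamma_s$. A natural implementation proceeds via a Gy\H{o}ry-style effective specialization $\vp:B\to\OQq$ chosen so that $\vp(\gamma_1)\kdots\vp(\gamma_s)$ remain multiplicatively independent algebraic numbers of controlled height, followed by an effective lower bound for linear forms in logarithms applied to the identity $\vp(\ve)=\prod\vp(\gamma_i)^{v_i}$, using that the Weil height of $\vp(\ve)$ is controlled by the size of $\widetilde{\ve}$. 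Constructing such a specialization effectively, while keeping the $\vp(\gamma_i)$ multiplicatively independent and converting representative size into a usable height bound, is the technically delicate part of the argument, and is exactly what Lemma \ref{le:7.2} encapsulates.
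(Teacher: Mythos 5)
Your proposal follows essentially the same route as the paper: extend $A$ to $\widetilde{A}=A[\gamma_1^{\pm1}\kdots\gamma_s^{\pm1}]$, present it over $\Zz[X_1\kdots X_{r+2s}]$, apply Theorem \ref{th:1.1} to the resulting unit equation, and then convert the size bound on a representative of $\ve=\prod\gamma_i^{v_i}$ into a bound on the exponents via the effective multiplicative-dependence machinery of Section \ref{7}. The only (harmless) differences are cosmetic: the paper uses the relations $g_{i2}X_{r+2i-1}-g_{i1}$, $g_{i1}X_{r+2i}-g_{i2}$ rather than your $g_{i2}X_{r+i}-g_{i1}$, $X_{r+i}X_{r+s+i}-1$, and the final step is more precisely an appeal to Corollary \ref{co:7.3} (a direct consequence of Lemma \ref{le:7.2}), whose underlying tool is the Loher--Masser bound of Lemma \ref{le:7.1} on multiplicative dependence rather than a linear-forms-in-logarithms estimate applied verbatim, though that is a fair heuristic description of what the lemma is doing.
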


\noindent
An immediate consequence of Theorem \ref{th:1.3} is that for given $f_1\kdots f_m$,
$a,b,c$ and $\gamma_1\kdots \gamma_s$, the solutions of \eqref{1.4} can be determined
effectively. 

Since every domain finitely generated over $\Zz$ has a finitely generated   
unit group, equation \eqref{1.1} maybe viewed as a special case of \eqref{1.4}. 
But since no general effective algorithm is known to find a finite system of generators
for the unit group of a finitely generated domain, 
we cannot deduce an effective 
result for \eqref{1.1} from Theorem \ref{th:1.3}. 
In fact, we argue reversely, and prove Theorem \ref{th:1.3} by combining 
Theorem \ref{th:1.1} with an effective result
on Diophantine equations of the type $\gamma_1^{v_1}\cdots\gamma_s^{v_s}=\gamma_0$ 
in integers $v_1\kdots v_s$,
where $\gamma_1\kdots\gamma_s,\gamma_0\in K^*$ (see Corollary \ref{co:7.3} below).

The idea of the proof of Theorem \ref{th:1.1} is roughly as follows.
We first estimate the degrees of the representatives of $\ve ,\eta$ 
using Mason's effective result \cite[1983]{Mas83} 
on two term $S$-unit equations over function fields.
Next, we apply many different specialization maps $A\to \OQq$ to \eqref{1.1}
and obtain in this manner a large system of $S$-unit equations over number fields.
By applying an existing effective finiteness result for such $S$-unit equations
(e.g., Gy\H{o}ry and Yu \cite[2006]{GyYu06})
we collect enough information to retrieve an effective upper bound
for the heights of the representatives of $\ve ,\eta$.
In our proof, we apply the specialization maps on a domain $B\supset A$
of a special type which can be dealt with more easily.
In the construction of $B$, we use
an effective result of Seidenberg \cite[1974]{Sei74} 
on systems of linear equations over polynomial rings over arbitrary fields.
To be able to go back to equation \eqref{1.1} over $A$,
we need an effective procedure to decide whether a given element of $B$ belongs to $A^*$.
For this decision procedure, we apply an effective result of Aschenbrenner 
\cite[2004]{Asc04} on systems of linear
equations over polynomial rings over $\Zz$.

The above approach was already followed by Gy\H{o}ry \cite[1983]{Gy83}, \cite[1984]{Gy84}.
However, in these papers the domains $A$ are represented over $\Zz$ in a different way.
Hence, to select those solutions from $B$ of the equations under consideration
which belong to $A$, certain restrictions on the domains $A$ had to be imposed.

In a forthcoming paper, we will give some applications of our above theorems
and our method of proof to other classes of Diophantine equations over finitely generated
domains. 

\section{Effective linear algebra over polynomial rings}\label{2}

We have collected some effective results for systems of linear equations
to be solved in polynomials with coefficients in a field,
or with coefficients in $\Zz$. 

Here and in the remainder of this paper, we write 
\[
\log^* x:=\max (1,\log x)\ \mbox{for $x>0$,  } \log^* 0 :=1.
\]
We use notation $O(\cdot )$ as an abbreviation for $c\times$ the expression
between the parentheses, where $c$ is an effectively computable
absolute constant. At each occurrence of $O(\cdot )$,
the value of $c$ may be different.

Given a commutative domain $R$, we denote by $R^{m,n}$ the $R$-module of 
$m\times n$-matrices with entries in $R$
and by $R^n$ the $R$-module of $n$-dimensional column vectors with entries in $R$.
Further, $\GL_n(R)$ denotes the group of matrices in $R^{n,n}$ 
with determinant in the unit group $R^*$.
The degree of a polynomial $f\in R[X_1\kdots X_N]$,
that is, its total degree, is denoted by $\deg f$.

From matrices $A,B$ with the same number of rows, 
we form a matrix $[A,B]$ by placing the columns of $B$
after those of $A$.
Likewise, from two matrices $A,B$ with the same number of columns 
we form $\left[\begin{smallmatrix}A\\B\end{smallmatrix}\right]$ 
by placing the rows of $B$ below those of $A$.

The logarithmic height $h(S)$ of a finite set $S=\{ a_1\kdots a_t\}\subset\Zz$
is defined by $h(S):=\log\max (|a_1|\kdots |a_t|)$.
The logarithmic height $h(U)$ of a matrix with entries in $\Zz$
is defined by the logarithmic height of the set of entries of $U$.
The logarithmic height $h(f)$ of a polynomial with coefficients in $\Zz$
is the logarithmic height of the set of coefficients of $f$.

\begin{lemma}\label{le:2.1}
Let $U\in\Zz^{m,n}$. Then
the $\Qq$-vector space of $\yv\in\Qq^n$ with $U\yv =\nullv$ 
is generated by vectors in $\Zz^n$
of logarithmic height at most $mh(U)+\half m\log m$.
\end{lemma}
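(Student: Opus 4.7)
The plan is to produce an explicit spanning set for the null space by applying Cramer's rule to a single maximal non-singular square submatrix of $U$, and then to bound heights by Hadamard's inequality. No deep tool is needed; this is essentially a careful linear-algebra computation, and the constant $\half m\log m$ is precisely what Hadamard gives.

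Let $r:=\rank(U)$. A row selection over $\Qq$ does not change the kernel, so I would first pick $r$ linearly independent rows of $U$, obtaining an $r\times n$ submatrix $U'$ with $h(U')\le h(U)$ and $\ker U'=\ker U$; this lets me assume $U$ has exactly $r\le m$ rows. After permuting columns (which only permutes the coordinates of solution vectors, hence preserves any $\ell^\infty$ height bound) I may further assume that the first $r$ columns of $U$ form an invertible matrix $A\in\Zz^{r,r}$, and write $U=[A,B]$ with $B\in\Zz^{r,n-r}$. A vector $\yv=\left(\begin{smallmatrix}\yv_1\\ \yv_2\end{smallmatrix}\right)\in\Qq^n$ lies in $\ker U$ if and only if $\yv_1=-A^{-1}B\yv_2$, and the resulting map $\yv_2\mapsto\yv$ is a $\Qq$-linear isomorphism from $\Qq^{n-r}$ onto $\ker U$. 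For $j=1,\dots,n-r$ I then define $\vv_j\in\Zz^n$ by taking $\yv_2:=(\det A)\,e_j$ and $\yv_1:=-(\det A)\,A^{-1}B\,e_j$: integrality holds because $(\det A)A^{-1}$ is the adjugate of $A$, and the $\vv_j$ span $\ker U$ over $\Qq$ because the vectors $(\det A)\,e_j$ span $\Qq^{n-r}$.

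It remains to bound the entries of the $\vv_j$. Writing $H:=\exp h(U)$, Cramer's rule identifies each entry of $-(\det A)A^{-1}B\,e_j$ with, up to sign, the determinant of the $r\times r$ matrix obtained from $A$ by replacing one of its columns by the $j$-th column of $B$; Hadamard's inequality bounds such a determinant by $r^{r/2}H^r$. The entries of the $\yv_2$-block are $0$ or $\pm\det A$, likewise bounded by $r^{r/2}H^r$. Since $r\le m$ and $x\log x$ is non-decreasing on $[1,\infty)$, this gives $\|\vv_j\|_\infty\le m^{m/2}H^m$ and hence $h(\vv_j)\le m\,h(U)+\half m\log m$, exactly as claimed. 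I expect no real obstacle; the only point requiring any care is to fix one pivot block and cycle over the $n-r$ free columns, rather than enumerating all $\binom{n}{r}$ possible supports, so that the constants come out sharply.
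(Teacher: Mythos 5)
Your proof is correct and follows essentially the same route as the paper: reduce to the case where the rank equals the number of rows, isolate an invertible $r\times r$ block, form the generators as the columns of $\left[\begin{smallmatrix}-C\\ \Delta I_{n-r}\end{smallmatrix}\right]$ (whose entries are $r\times r$ minors of $U$ by Cramer's rule), and bound those minors by Hadamard's inequality. The only cosmetic difference is that you spell out the initial rank reduction and the monotonicity step $r\le m$, which the paper compresses into a ``without loss of generality.''
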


\begin{proof}
Without loss of generality we may assume that $U$ has rank $m$,
and moreover, that the matrix $B$
consisting of the first $m$ columns of $U$ is invertible. 
Let $\Delta := \det B$.
By multiplying with $\Delta B^{-1}$, we can rewrite $U\yv =\nullv $ 
as $[\Delta I_m ,\, C]\yv =\nullv$,
where $I_m$ is the $m\times m$-unit matrix,
and $C$ consists of $m\times m$-subdeterminants of $U$.
The solution space of this system is generated by the columns of 
$\left[\begin{smallmatrix}-C\\ \Delta I_{n-m}\end{smallmatrix}\right]$.
An application of Hadamard's inequality gives the upper bound from the lemma
for the logarithmic heights of these columns.
\end{proof}

\begin{proposition}\label{le:2.2}
Let $F$ be a field, $N\geq 1$, and $R:=F [X_1\kdots X_N]$. 
Further, let $A$ be an $m\times n$-matrix and $\bv$ and $m$-dimensional
column vector, both consisting of polynomials from $R$
of degree $\leq d$ where $d\geq 1$.
\\[0.15cm]
(i) The $R$-module of $\xv\in R^n$ with $A\xv =\nullv$
is generated by vectors $\xv$ whose coordinates are polynomials
of degree at most $(2md)^{2^N}$.
\\[0.15cm]
(ii) Suppose that $A\xv =\bv$ is solvable in $\xv\in R^n$.
Then it has a solution $\xv$ whose coordinates are polynomials
of degree at most $(2md)^{2^N}$.
\end{proposition}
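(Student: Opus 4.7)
The plan is to prove (i) and (ii) simultaneously by induction on the number of variables $N$.

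\textbf{Base case} ($N=0$): here $R=F$ is a field, every nonzero element has degree $0$, and both parts reduce to standard linear algebra over a field; the stated bound holds trivially.

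\textbf{Inductive step}: Assume (i) and (ii) for $N-1$ variables. Set $R_1:=F[X_1,\ldots,X_{N-1}]$ and view $R=R_1[X_N]$. Expand
\[
A \;=\; \sum_{i=0}^{d} A_i X_N^i,\qquad \bv \;=\; \sum_{i=0}^{d} \bv_i X_N^i,\qquad \xv \;=\; \sum_{k=0}^{D} \xv_k X_N^k,
\]
with $A_i\in R_1^{m\times n}$, $\bv_i\in R_1^m$, $\xv_k\in R_1^n$. Comparing coefficients of $X_N^\ell$ in $A\xv=\bv$ gives the $R_1$-linear system
\[
\sum_{i+k=\ell} A_i \xv_k \;=\; \bv_\ell \qquad (\ell=0,1,\ldots,D+d)
\]
in the block unknowns $\xv_0,\ldots,\xv_D$, whose coefficient matrix has entries in $R_1$ of degree at most $d$. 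Once an \emph{a priori} bound $D\leq D^*=D^*(m,d,N)$ is secured on the $X_N$-degree of some solution (respectively, of a set of kernel generators), this system has a known finite size, and the inductive hypothesis in $N-1$ variables bounds the degrees in $X_1,\ldots,X_{N-1}$ of the $\xv_k$, hence the total degree of $\xv$.

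The main obstacle is securing the degree bound $D^*$ in the distinguished variable $X_N$. The standard route, originating with Hermann and sharpened by Seidenberg, proceeds by elimination: after passing to the quotient field of $R_1$ one puts $A$ into row-echelon form, so that solvability of $A\xv=\bv$ over $R$ becomes a finite list of ideal-membership and divisibility conditions among the pivots, minors, and resultants of $A$ over $R_1$, each of which can again be controlled by the inductive hypothesis in $N-1$ variables. Combining this with the coefficient-equation reduction above and tracking constants through the recursion produces the claimed bound $(2md)^{2^N}$; the doubling of the exponent at each step reflects that the inductive step invokes the previous bound on a system whose size is itself governed by that previous bound.

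Finally, (i) and (ii) are interwoven in this induction rather than derived one from the other: the kernel bound (i) at the current level is needed to describe the solution set of the affine problem (ii), since two solutions differ by a kernel element, while conversely a particular solution produced by (ii) is needed when constructing kernel generators for the augmented matrix $[A,-\bv]$ arising in (i). This symmetric interplay is why the simultaneous induction is natural.
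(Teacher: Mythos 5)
The paper does not actually prove this proposition; it simply cites Aschenbrenner's Theorems 3.2 and 3.4, and then remarks pointedly that ``results of this type were obtained earlier, but not with a completely correct proof, by Hermann and Seidenberg.'' Your proposal reconstructs exactly the Hermann--Seidenberg double induction, and the place where you wave your hands --- ``securing the degree bound $D^*$ in the distinguished variable $X_N$,'' which you dispatch by invoking ``elimination \ldots{} a finite list of ideal-membership and divisibility conditions among the pivots, minors and resultants'' --- is precisely the place where the classical argument was found to be incomplete and where Aschenbrenner's work supplies the missing content. You thus reproduce the known route up to, but not through, the difficult point, which is where a proof is actually needed.

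There is a second, more quantitative gap in the bookkeeping. Suppose the bound at level $N-1$ is $(2md)^{2^{N-1}}$ and you obtain a bound $D^*$ of this order on the $X_N$-degree. The coefficient comparison then produces an $R_1$-linear system whose number of rows is roughly $m(D^*+d)$, i.e.\ $m$ is replaced by $m'\approx m(2md)^{2^{N-1}}$. Feeding $m'$ into the inductive bound gives $(2m'd)^{2^{N-1}}\approx (2md)^{2^{2N-2}}$, not $(2md)^{2^N}$; for $N\ge 3$ the exponent already overshoots. So the heuristic ``the doubling of the exponent at each step reflects that the inductive step invokes the previous bound on a system whose size is itself governed by that previous bound'' does not in fact yield the claimed exponent $2^N$; making the recursion close with that exponent requires a more careful choice of what, exactly, one feeds into the inductive hypothesis (for instance bounding the $X_N$-degree independently and cheaply, not by a recursive call of the same strength). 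Both of these gaps are substantive; as it stands the proposal is a plausible outline of the wrong-in-detail classical argument rather than a proof.
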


\begin{proof}
See Aschenbrenner \cite[Thms. 3.2, 3.4]{Asc04}.
Results of this type were obtained earlier, but not with a completely correct proof,
by Hermann \cite[1926]{Her26} and Seidenberg \cite[1974]{Sei74}.
\end{proof}

\begin{corollary}\label{co:2.3}
Let $R:= \Qq [X_1\kdots X_N]$. Further,
Let $A$ be an $m\times n$-matrix of polynomials in $\Zz [X_1\kdots X_N]$
of degrees at most $d$ and logarithmic heights at most $h$ where $d\geq 1$, $h\geq 1$.
Then the $R$-module of $\xv\in R^n$ with $A\xv =\nullv$ is generated
by vectors $\xv$, consisting of polynomials in 
$\Zz [X_1\kdots X_N]$ of degree at most $(2md)^{2^N}$ and height 
at most $(2md)^{6^N}(h+1)$.
\end{corollary}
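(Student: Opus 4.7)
The plan is to combine Proposition \ref{le:2.2}(i), which controls the degrees of generators, with Lemma \ref{le:2.1}, which controls the heights of integer solutions to linear systems over $\Zz$. The point is that once the degree is bounded, the condition ``$A\xv=\nullv$'' on polynomial vectors of bounded degree becomes a finite linear system with integer coefficients of height at most $h$, to which Lemma \ref{le:2.1} applies directly.

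First, apply Proposition \ref{le:2.2}(i) with $F=\Qq$ to get a bound $D:=(2md)^{2^N}$ such that the $R$-module $M_A:=\{\xv\in R^n:A\xv=\nullv\}$ is generated by vectors whose coordinates are polynomials of degree at most $D$. Let $V\subset M_A$ be the $\Qq$-subspace of such bounded-degree solutions; then $V$ still generates $M_A$ as an $R$-module, so it suffices to show that $V$ has a $\Qq$-basis consisting of vectors with integer polynomial entries of the required height. Parametrize an element of $V$ by the $n\binom{D+N}{N}$ rational coefficients of its coordinate polynomials. The condition $A\xv=\nullv$ equates to the vanishing of every coefficient of each of the $m$ polynomials in $A\xv$, each of degree at most $D+d\le 2D$; this yields at most $M:=m\binom{2D+N}{N}$ linear equations whose coefficients are simply the coefficients of the entries of $A$, hence integers of absolute value at most $e^h$.

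Now apply Lemma \ref{le:2.1} to this linear system $U\yv=\nullv$. It produces a $\Qq$-generating set of $V$ consisting of integer vectors of logarithmic height at most $Mh+\tfrac12 M\log M$. Translating back, this gives a $\Qq$-generating set of $V$ (hence an $R$-generating set of $M_A$) made of elements of $\Zz[X_1,\ldots,X_N]^n$ whose polynomial coordinates have degree $\le D=(2md)^{2^N}$ and logarithmic height at most $Mh+\tfrac12 M\log M$.

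What remains is the arithmetic verification that $Mh+\tfrac12 M\log M\le (2md)^{6^N}(h+1)$. Using $\binom{2D+N}{N}\le (2D+N)^N\le (3D)^N$ (valid since $D\ge N$ in the relevant regime, else absorb into constants), one gets $M\le m\cdot 3^N(2md)^{N\cdot 2^N}$, and hence $\log M\le O(N\cdot 2^N\log(2md))$. Therefore $Mh+\tfrac12 M\log M\le M(h+1)\bigl(1+O(N\cdot 2^N)\bigr)\le (2md)^{(N+O(1))\cdot 2^N}(h+1)$, which is comfortably below $(2md)^{6^N}(h+1)$ because $6^N=2^N\cdot 3^N$ dominates $N\cdot 2^N$ for all $N\ge 1$ (with room to absorb the implicit constant).

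The main obstacle is not conceptual — the strategy is essentially forced by the two tools at hand — but rather bookkeeping: one must check that the crude bounds $\binom{2D+N}{N}\le (3D)^N$ and $\log M=O(N\cdot 2^N\log(2md))$ truly fit under the exponent $6^N$, uniformly in $m,d,N,h\ge 1$. The gap between $N\cdot 2^N$ and $6^N$ (a factor of $3^N/N$) is exactly what allows the ``extra'' logarithmic factor coming from $\tfrac12 M\log M$ to be swallowed, and this is what motivates the exponent $6^N$ rather than something tighter.
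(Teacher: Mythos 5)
Your proof is correct and follows essentially the same route as the paper's: reduce to the bounded-degree space $V$ via Proposition \ref{le:2.2}(i), observe that membership in $V$ subject to $A\xv=\nullv$ is a finite integer linear system of height at most $h$ and row count $m^*$, and apply Lemma \ref{le:2.1}. The paper uses the slightly tighter count $m^*\leq m\binom{(2md)^{2^N}+d+N}{N}$ where you use $m\binom{2D+N}{N}$ (harmless since $d\le D$), and both you and the paper leave the final numerical comparison $m^*h+\tfrac12 m^*\log m^*\leq (2md)^{6^N}(h+1)$ as a routine check; your heuristic $M(h+1)(1+O(N\cdot 2^N))$ silently absorbs a $\log(2md)$ factor from $\log M$, but the slack in the exponent $6^N$ is indeed enough to cover it, so there is no real gap.
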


\begin{proof}
By Proposition \ref{le:2.2} (i) we have to study $A\xv =\nullv$, restricted to 
vectors $\xv\in R^n$ consisting of polynomials of degree at most $(2d)^{2^N}$.
The set of these $\xv$ 
is a finite dimensional $\Qq$-vector space, and we have to prove
that it is generated by vectors whose coordinates 
are polynomials in $\Zz [X_1\kdots X_N]$
of logarithmic height at most $(2md)^{6^N}(h+1)$.
 
If $\xv$ consists of polynomials of degree at most $(2md)^{2^N}$,
then $A\xv$ consists of $m$ polynomials with coefficients in $\Qq$ of degrees
at most $(2md)^{2^N}+d$, all whose coefficients have to be set to $0$.
This leads to a system of linear equations
$U\yv =\nullv$, where $\yv$ consists of the coefficients of the polynomials
in $\xv$ and $U$ consists of integers of logarithmic heights at most $h$.
Notice that the number $m^*$ of rows of $U$ is  $m$ times the number of 
monomials in $N$ variables of degree at most $(2md)^{2^N}+d$, that is
\[
m^*\leq m\binom{(2md)^{2^N}+d+N}{N}.
\]
By Lemma \ref{le:2.1} the solution space of $U\yv =\nullv$ is generated by
integer vectors of logarithmic height at most 
\[
m^*h+\half m^*\log m^*\leq (2md)^{6^N}(h+1).
\]
This completes the proof of our corollary.
\end{proof}

\begin{lemma}\label{le:2.4}
Let $U\in\Zz^{m,n}$, $\bv\in\Zz^m$ be such that $U\yv=\bv$ 
is solvable in $\Zz^n$.
Then it has a solution
$\yv\in\Zz^n$ with $h(\yv )\leq mh([U,\bv ])+\half m\log m$.
\end{lemma}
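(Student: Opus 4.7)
The plan is to adapt the argument of Lemma~\ref{le:2.1} to the inhomogeneous setting, exploiting the solvability hypothesis at two points. First, solvability forces $\rank U = \rank[U,\bv]$, so any rows of $[U,\bv]$ that are $\Qq$-linear combinations of the others may be discarded without affecting the set of integer solutions; this reduces to the case where $U$ has full row rank $m$. After a column permutation I take the first $m$ columns of $U$ to form a nonsingular submatrix $B$ with $\Delta := \det B$, and write $U = [B,C]$.

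Multiplying on the left by the integer adjugate matrix $\Delta B^{-1}$ transforms $U\yv = \bv$ into the equivalent system $[\Delta I_m,\, C']\yv = \bv'$, with $C' := \Delta B^{-1}C$ and $\bv' := \Delta B^{-1}\bv$. Every entry of $C'$ and $\bv'$, as well as $\Delta$ itself, is (up to sign) an $m\times m$ subdeterminant of $[U,\bv]$, so by Hadamard's inequality all of them are bounded in absolute value by $m^{m/2}H^m$ with $H := \exp h([U,\bv])$. Writing $\yv = (\yv_1,\yv_2)$ with $\yv_1 \in \Zz^m$ and $\yv_2 \in \Zz^{n-m}$, the reduced system reads $\Delta \yv_1 = \bv' - C'\yv_2$, so any $\yv_2 \in \Zz^{n-m}$ satisfying the componentwise congruence $C'\yv_2 \equiv \bv' \pmod{|\Delta|}$ produces an integer $\yv_1$; the hypothesis guarantees one such $\yv_2^*$.

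Reducing each coordinate of $\yv_2^*$ modulo $|\Delta|$ into $(-|\Delta|/2,|\Delta|/2]$ preserves the congruence and immediately gives $h(\yv_2) \leq \log(|\Delta|/2) \leq mh([U,\bv]) + \frac{1}{2}m\log m$. The delicate step is to bound $h(\yv_1)$ by the same expression, since the direct estimate $|\yv_1| \leq \bigl(|\bv'| + (n-m)\max|C'_{ij}|\cdot\max|\yv_{2,j}|\bigr)/|\Delta|$ carries an unwanted factor of $n$. I remove this factor by exploiting the additional freedom to translate $\yv_2$ within its coset of the sublattice $L := \{\mu \in \Zz^{n-m}: C'\mu \equiv \nullv \pmod{|\Delta|}\} \supset |\Delta|\Zz^{n-m}$; any $\nu \in L$ changes $\yv_1$ by the integer vector $(C'\nu)/\Delta \in \Zz^m$, so $\yv_1$ ranges over an affine integer sublattice of $\Zz^m$. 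A Minkowski- or Siegel-type closest-vector reduction of $\yv_1$ modulo this sublattice, carried out so as to keep $\yv_2$ in the same box of side $|\Delta|$, brings $|\yv_1|$ down to $m^{m/2}H^m$ as required. This joint lattice-reduction step, which goes beyond the purely componentwise reduction used for $\yv_2$, is the main obstacle.
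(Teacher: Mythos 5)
The paper disposes of this lemma in one line by citing the theorem of Borosh, Flahive, Rubin and Treybig, which asserts precisely that a solvable integer system $U\yv = \bv$ of full row rank $m$ has an integer solution whose entries are bounded by the maximum absolute value of the $m\times m$ minors of $[U,\bv]$; Hadamard's inequality then yields the stated bound. Your proposal instead attempts to extend the elementary argument of Lemma~\ref{le:2.1} to the inhomogeneous case, and the gap you flag at the end is genuine. After the reduction $\Delta\yv_1 = \bv' - C'\yv_2$, reducing $\yv_2$ coordinatewise into $(-|\Delta|/2,|\Delta|/2]^{n-m}$ does control $h(\yv_2)$, but the induced bound on $\yv_1$ inevitably picks up the factor $n-m$ you mention, which is absent from the lemma. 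The ``joint lattice-reduction'' you propose to remove it is not a routine closest-vector step: translating $\yv_2$ by $\nu\in L := \{\mu : C'\mu\equiv\nullv\pmod{|\Delta|}\}$ shifts $\yv_1$ by $C'\nu/\Delta$, and the lattice of such shifts need not be all of $\Zz^m$; moreover, reducing $\yv_1$ modulo this image will in general push $\yv_2$ out of its box. Controlling both pieces simultaneously with the sharp constant $m^{m/2}H^m$ is exactly the nontrivial content of the Borosh--Flahive--Rubin--Treybig theorem, so your outline correctly identifies the obstacle but does not overcome it. The cleanest resolution is simply to cite that theorem, as the paper does.
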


\begin{proof}
Assume without loss of generality that $U$ and $[U,\bv ]$ have rank $m$.
By a result of Borosh, Flahive, Rubin and Treybig \cite[1989]{BFRT89},
$U\yv =\bv$ has a solution $\yv\in\Zz^n$ such that the absolute values of the entries of $\yv$
are bounded above by the maximum of the absolute values of the 
$m\times m$-subdeterminants of
$[U,\bv ]$. The upper bound for $h(\yv )$ as in the lemma 
easily follows from Hadamard's inequality.
\end{proof}

\begin{proposition}\label{le:2.5}
Let $N\geq 1$ and let $f_1\kdots f_m,b\in\Zz [X_1\kdots X_N]$
be polynomials of degrees at most $d$ and logarithmic heights at most $h$
where $d\geq 1$, $h\geq 1$,
such that
\begin{equation}\label{2.1}
f_1x_1+\cdots +f_mx_m=b
\end{equation}
is solvable in $x_1\kdots x_m\in\Zz [X_1\kdots x_N]$.
Then \eqref{2.1} has a solution in polynomials 
$x_1\kdots x_m\in\Zz [X_1\kdots X_N]$ with 
\begin{equation}\label{2.2}
\deg x_i \leq (2d)^{\exp O(N\log^* N)}(h+1),\ \ 
h(x_i)\leq (2d)^{\exp O(N\log^* N)}(h+1)^{N+1}
\end{equation}
for $i=1\kdots m$. 
\end{proposition}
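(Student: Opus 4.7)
The plan is to reduce the polynomial equation \eqref{2.1} to a linear system over $\Zz$ and apply the height bound of Lemma \ref{le:2.4}, the only genuine difficulty being the derivation of the degree bound. Suppose we have in hand a degree bound $D$ with the property that solvability of \eqref{2.1} in $\Zz[X_1,\kdots X_N]$ is equivalent to solvability in polynomials of degree at most $D$. Writing each unknown as $x_i=\sum_{|\alpha|\le D} y_{i,\alpha}\Xv^\alpha$ with indeterminate integer coefficients $y_{i,\alpha}$, and expanding $\sum_i f_ix_i$ monomial by monomial, \eqref{2.1} becomes a linear system $U\yv=\vv$ over $\Zz$ in which $\yv$ collects all $y_{i,\alpha}$, while $U$ and $\vv$ are assembled from the coefficients of $f_1,\kdots f_m$ and $b$. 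In particular $h([U,\vv])\le h$, and the dimensions of $U$ are $O\bigl(\binom{D+d+N}{N}\bigr)\times O\bigl(m\binom{D+N}{N}\bigr)$. Assuming solvability, Lemma \ref{le:2.4} then produces a solution $\yv\in\Zz^{\dim\yv}$ of logarithmic height $O\bigl(\binom{D+d+N}{N}\,h\bigr)$, which translates directly into the claimed bound on $h(x_i)$.

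The first substantive step is to produce the degree bound $D$. Proposition \ref{le:2.2}(ii), applied over $F=\Qq$, already shows that $\Qq$-solvability of \eqref{2.1} forces a $\Qq[\Xv]$-solution with $\deg x_i\le (2md)^{2^N}$. This, however, only controls $\Qq[\Xv]$-solvability, and the main obstacle is that at this same degree bound, $\Zz[\Xv]$-solvability may fail even when a $\Zz[\Xv]$-solution exists at larger degree, because the $\Qq[\Xv]$-solutions of minimal degree can require essential denominators. One must therefore enlarge $D$ to a value at which $\Zz[\Xv]$- and $\Qq[\Xv]$-solvability coincide, and this is the heart of Aschenbrenner's argument and the main technical difficulty of Proposition \ref{le:2.5}.

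To handle this, the natural route is induction on $N$, the base case $N=0$ being Lemma \ref{le:2.4}. In the inductive step, one selects $X_N$ as a distinguished variable and views \eqref{2.1} as an equation over $R':=\Zz[X_1,\kdots X_{N-1}]$ in the single variable $X_N$. A local analysis prime by prime---carried out either via reduction of $R'[X_N]$ modulo each prime $p$ (applying Proposition \ref{le:2.2} over the residue field) or equivalently by a resultant-type elimination of $X_N$---reduces the problem to finitely many ideal-membership questions in $R'$ with controlled coefficient sizes. Invoking the inductive hypothesis in $N-1$ variables yields solutions in $R'$ with degrees at most $(2d)^{\exp O((N-1)\log^*(N-1))}(h+1)$ and logarithmic heights at most $(2d)^{\exp O((N-1)\log^*(N-1))}(h+1)^{N}$. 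Reassembling these into a $\Zz[\Xv]$-solution of the original equation and combining with the linear-algebra bookkeeping of the first paragraph, one obtains \eqref{2.2}: each recursion level multiplies the degree bound by a factor polynomial in the previous degree (whose logarithm contributes an additive $O(\log^* N)$ at level $N$), and introduces one additional factor of $(h+1)$ through the height-tracking in Lemma \ref{le:2.4}. Summing over all $N$ levels produces precisely the $\exp O(N\log^* N)$ exponent and the extra $(h+1)^{N+1}$ in the height estimate.
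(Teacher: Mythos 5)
Your second step—taking a degree bound $D$ as given, expanding each $x_i$ in its coefficients, translating \eqref{2.1} into a linear system $U\yv=\vv$ over $\Zz$ with $h([U,\vv])\le h$, and applying Lemma~\ref{le:2.4} to control the height—is exactly the argument in the paper, and it is correct. You also correctly identify the crux: Proposition~\ref{le:2.2}(ii) over $\Qq$ does not give the required degree bound, because a $\Qq[\Xv]$-solution of small degree may carry denominators that cannot be cleared, while a $\Zz[\Xv]$-solution may only exist at much higher degree.

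The gap is in your treatment of the degree bound itself. The paper does not prove it: it invokes Aschenbrenner's Theorem~A from \cite{Asc04} verbatim as the source of $d_0=(2d)^{\exp O(N\log^* N)}(h+1)$, and Proposition~\ref{le:2.5} only adds the height estimate on top of that. You instead attempt to sketch a proof of the degree bound by induction on $N$, with a ``local analysis prime by prime'' reducing to ideal membership over $\Zz[X_1\kdots X_{N-1}]$. That one sentence is precisely where the hard work in Aschenbrenner's paper lives (a delicate interplay of flatness arguments, primary decomposition, and $p$-adic estimates, together with a separate treatment of the torsion that obstructs lifting $\Qq$-solutions to $\Zz$-solutions), and it cannot be summarized as ``controlled coefficient sizes.'' Moreover, your bookkeeping does not actually produce the claimed exponent: if each level of the recursion replaced $d$ by something polynomial in $d$, $N$ levels would yield an exponent like $c^N$ rather than $N\log^* N$; the particular shape $\exp O(N\log^* N)$ reflects the specific recursion in \cite{Asc04}, not a generic polynomial-per-level growth. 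The correct move in a write-up is what the paper does: cite Aschenbrenner's Theorem~A for the degree bound, then prove only the height bound via Lemma~\ref{le:2.4} as you did in your first paragraph.
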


\begin{proof}
Aschenbrenner's main theorem \cite [Theorem A]{Asc04} states that 
Eq. \eqref{2.1} has a solution $x_1\kdots x_m\in\Zz [X_1\kdots X_N]$
with $\deg x_i\leq d_0$ for $i=1\kdots m$, where 
\[
d_0=(2d)^{\exp O(N\log^*N)}(h+1).
\]
So it remains to show the existence of a solution with small logarithmic height.

Let us restrict to solutions $(x_1\kdots x_m)$ of \eqref{2.1}
of degree $\leq d_0$, 
and denote by $\yv$ the vector of coefficients 
of the polynomials $x_1\kdots x_m$. 
Then \eqref{2.1} translates into a system of linear equations  $U\yv =\bv$
which is solvable over $\Zz$.
Here, the number of equations, i.e., number of rows of $U$,
is equal to $m^*:= \binom{d_0+d+N}{N}$. Further, $h(U, \bv )\leq h$.
By Lemma \ref{le:2.4}, $U\yv =\bv$ has a solution $\yv$ 
with coordinates in $\Zz$ of height at most 
\[
m^*h +\half m^*\log m^*\leq (2d)^{\exp O(N\log^* N)}(h+1)^{N+1}.
\]
It follows that \eqref{2.1} has a solution
$x_1\kdots x_m\in\Zz [X_1\kdots X_N]$ satisfying \eqref{2.2}.
\end{proof} 

\noindent
{\bf Remarks. 1.}
Aschenbrenner gives in \cite{Asc04} an example which shows that the upper bound for the
degrees of the $x_i$ cannot depend on $d$ and $N$ only.
\\[0.15cm]
{\bf 2.} The above lemma gives an effective criterion for ideal membership
in $\Zz [X_1\kdots X_N]$. 
Let $b\in\Zz [X_1\kdots X_N]$ be given. Further, suppose that an ideal
$I$ of $\Zz [X_1\kdots X_N]$ is given by a finite set of generators
$f_1\kdots f_m$. 
By the above lemma, if $b\in I$ then
there are $x_1\kdots x_m\in\Zz [X_1\kdots X_N]$
with upper bounds for the degrees and heights as in \eqref{2.2}
such that $b=\sum_{i=1}^m x_if_i$. It requires only a finite computation
to check whether such $x_i$ exist.

\section{A reduction}\label{3}

We reduce the general unit equation \eqref{1.1} to a unit equation over a domain $B$ of a special type which can be dealt with more easily.

Let again $A=\Zz [z_1\kdots z_r]\supset\Zz$ be a commutative domain finitely
generated over $\Zz$ and denote by $K$ the quotient field of $A$. We assume that $r>0$.
We have
\begin{equation}\label{3.-1}
A\cong \Zz [X_1\kdots X_r]/I
\end{equation}
where $I$ is the ideal of polynomials $f\in\Zz [X_1\kdots X_r]$
such that $f(z_1\kdots z_r)\\=0$. The ideal $I$ is finitely generated.
Let $d\geq 1$, $h\geq 1$ and assume that  
\begin{equation}\label{3.0}
I=(f_1\kdots f_m)\ \ \mbox{with } \deg f_i\leq d,\ \ h(f_i)\leq h\ 
(i=1\kdots m).
\end{equation}

Suppose that $K$ has transcendence degree $q\geq 0$. 
In case that $q>0$, we assume without loss of generality
that $z_1\kdots z_q$ form a transcendence basis of $K/\Qq$. 
We write
$t:= r-q$ and rename $z_{q+1}\kdots z_r$ as $y_1\kdots y_t$,
respectively. In case that $t=0$ we have $A=\Zz [z_1\kdots z_q]$, $A^*=\{\pm 1\}$
and Theorem \ref{th:1.1} is trivial. So we assume henceforth that $t>0$. 

Define 
\begin{eqnarray*}
&&A_0:=\Zz [z_1\kdots z_q],\ \ K_0:=\Qq (z_1\kdots z_q)\ \ \mbox{if } q>0,
\\[0.15cm]
&&A_0:=\Zz,\ \ K_0:=\Qq\ \ \mbox{if } q=0.
\end{eqnarray*}
Then
\[
A=A_0[y_1\kdots y_t],\ \ K=K_0(y_1\kdots y_t).
\]
Clearly,
$K$ is a finite extension of $K_0$, so in particular an algebraic number field if $q=0$. 
Using standard algebra techniques,
one can show that there exist $y\in A$, $f\in A_0$ such that $K=K_0(y)$,  
$y$ is integral over $A_0$, and
\[ 
A\subseteq B:=A_0[f^{-1},y],\ \ \ \ a,b,c\in B^*.
\]
If $\ve,\eta\in A^*$ is a solution to \eqref{1.1}, 
then $\ve_1:= a\ve/c$, $\eta_1 :=b\eta/c$ satisfy
\begin{equation}\label{3.-2}
\ve_1 +\eta_1 =1,\ \ \ve_1 ,\eta_1\in B^*.
\end{equation} 
At the end of this section, we formulate Proposition \ref{pr:3.6} which gives an effective result
for equations of the type \eqref{3.-2}.
More precisely, we introduce an other type of degree and height $\Deg (\alpha )$ and $\Height (\alpha )$ for elements $\alpha$ of $B$, and give effective upper bounds for the 
$\Deg$ and $\Height$ 
of $\ve_1,\eta_1$. 
Subsequently we deduce Theorem \ref{th:1.1}.

The deduction of Theorem \ref{th:1.1} is based on some auxiliary results
which are proved first. 
We start with an explicit construction of $y,f$, with effective upper bounds
in terms of $r$, $d$, $h$ and $a,b,c$
for the degrees and logarithmic heights of $f$ and of the coefficients in $A_0$ 
of the monic minimal polynomial of $y$ over $A_0$. 
Here we follow more or less Seidenberg \cite[1974]{Sei74}.
Second, for a given solution $\ve ,\eta$ of \eqref{1.1},
we derive  effective upper bounds
for the degrees and logarithmic heights
of representatives for $\ve$, $\ve^{-1}$, $\eta$, $\eta^{-1}$
in terms of $\Deg (\ve_1 )$, $\Height (\ve_1 )$, $\Deg (\eta_1)$, $\Height (\eta_1)$.
Here we use Proposition \ref{le:2.5} (Aschenbrenner's result).

We introduce some further notation.
First let $q>0$. Then
since $z_1\kdots z_q$ are algebraically independent, we may view them as independent
variables,
and for $\alpha\in A_0$, we denote by $\deg\alpha$, $h(\alpha )$ 
the total degree and logarithmic height of $\alpha$, viewed as polynomial in $z_1\kdots z_q$.
In case that $q=0$, we have $A_0=\Zz$, and we agree that $\deg \alpha =0$, 
$h(\alpha )=\log |\alpha|$ for $\alpha\in A_0$.
We frequently use the following estimate, valid for all $q\geq 0$:

\begin{lemma}\label{le:3.0}
Let $g_1\kdots g_n\in A_0$ and $g=g_1\cdots g_n$. Then
\[
|h(g)-\sum_{i=1}^n h(g_i)|\leq q\deg g.
\]
\end{lemma}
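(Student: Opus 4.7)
The plan is to handle the trivial case $q=0$ separately and then tackle the two directions of the inequality independently for $q\ge 1$. When $q=0$, $A_0=\Zz$, $\deg g=0$, and $h(\alpha)=\log|\alpha|$ is additive over products of nonzero integers, so the inequality holds as an equality $0=0$.

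For $q\ge 1$, the upper direction $h(g)-\sum_i h(g_i)\le q\deg g$ admits a short elementary proof: the $\ell^1$-coefficient norm $L(f):=\sum_I|c_I|$ is submultiplicative, and each factor satisfies $L(g_i)\le\binom{\deg g_i+q}{q}H(g_i)\le(\deg g_i+1)^q H(g_i)\le e^{q\deg g_i}H(g_i)$, using the elementary bound $(d+1)^q\le e^{qd}$ that follows from $\log(d+1)\le d$. Multiplying over $i$ gives $H(g)\le L(g)\le\prod_i L(g_i)\le e^{q\deg g}\prod_i H(g_i)$, and taking logarithms yields the claim with constant exactly $1$.

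For the reverse (Gelfond-type) direction $\sum_i h(g_i)\le h(g)+q\deg g$, I would invoke the multivariate Mahler measure
\[
M(f):=\exp\int_{[0,1]^q}\log\bigl|f(e^{2\pi i\theta_1},\ldots,e^{2\pi i\theta_q})\bigr|\,d\theta_1\cdots d\theta_q,\qquad m(f):=\log M(f),
\]
whose key virtue, inherited variable-by-variable from the univariate product formula together with Fubini, is multiplicativity: $m(g)=\sum_i m(g_i)$. Iterating the univariate coefficient estimate $H(P)\le 2^{\deg P}M(P)$ through each of $X_1,\ldots,X_q$ yields $H(f)\le 2^{d^*(f)}M(f)$, where $d^*(f):=\sum_{j=1}^q\deg_{X_j}f\le q\deg f$. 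Converting back via $M(g)\le L(g)\le e^{q\deg g}H(g)$ and summing, one gets $\sum_i h(g_i)\le m(g)+(\log 2)\,d^*(g)\le h(g)+q\deg g\,(1+\log 2)$.

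The hard part will be squeezing the absolute constant in the Gelfond direction from the crude value $1+\log 2$ produced by the above route down to the stated $1$. I would do this by replacing the two coarse ingredients by their sharpest available counterparts: Landau's inequality $M(f)\le\|f\|_{L^2}\le\sqrt{\binom{\deg f+q}{q}}H(f)$ on one side, and the elementary-symmetric-function bound $H(f)\le\prod_j\binom{\deg_{X_j}f}{\lfloor\deg_{X_j}f/2\rfloor}M(f)$ on the other, combined with the additivity $d^*(g)=\sum_i d^*(g_i)$ of partial degrees and a careful split of the bookkeeping between those $g_i$ with $\deg g_i=0$ (which contribute nothing to either side) and those with $\deg g_i\ge 1$ (where $\log(\deg g_i+1)\le\deg g_i$). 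Assembling these ingredients should deliver precisely the asserted bound $q\deg g$.
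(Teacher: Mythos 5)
The paper does not prove this lemma at all: it simply cites Bombieri--Gubler, Lemma~1.6.11 (Gelfond's lemma), so what you have written is a self-contained re-derivation rather than a comparison with an argument in the paper. Your treatment of $q=0$ and your upper-bound direction via submultiplicativity of the $\ell^1$-coefficient norm $L$ are both clean and correct, and give the constant~$1$ with no fuss since $\log\binom{d+q}{q}\le q\log(d+1)\le qd$ and total degree is additive. The lower (Gelfond) direction via multiplicativity of the Mahler measure is precisely the mechanism Bombieri--Gubler use, so methodologically you are on the same track; your $\ell^1$ shortcut for the easy direction is a mild simplification over theirs.

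The gap is in the final assembly of the hard direction, which you explicitly defer. The pivot you point at, namely $\log(\deg g_i+1)\le\deg g_i$ applied factor-by-factor, is the inequality that closes the \emph{upper} direction; it is not what is needed for the Gelfond direction. What actually makes your listed ingredients deliver the constant~$1$ is this: after stripping scalar factors (your split on $\deg g_i=0$ vs.\ $\ge 1$, which is fine since scaling by a nonzero integer adds exactly $\log|c|$ to both $h(g)$ and $\sum h(g_i)$), one may assume every remaining $g_i$ has $\deg g_i\ge 1$, so $n\le D:=\deg g$. If $D\le 1$ there is at most one factor and the claim is a trivial equality. If $D\ge 2$, then combining Landau ($m(g)\le h(g)+\tfrac12\log\binom{D+q}{q}$), Mahler's coefficient bound ($h(g_i)\le m(g_i)+\sum_j\log\binom{d_{ij}}{\lfloor d_{ij}/2\rfloor}\le m(g_i)+(\log 2)\,d^*(g_i)$), multiplicativity, and $\sum_i d^*(g_i)=d^*(g)\le qD$ gives
\[
\sum_i h(g_i)-h(g)\;\le\;\tfrac12 q\log(D+1)+(\log 2)\,qD,
\]
and the numerical inequality $\tfrac12\log(D+1)+(\log 2)D\le D$, valid for all integers $D\ge 2$ (it fails at $D=1$, which is why the reduction matters), finishes the bound by $qD$. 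With that observation inserted, your route does go through; without it, ``assembling these ingredients should deliver'' is an assertion, not an argument.

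Two smaller points. First, the multivariate Mahler bound $H(f)\le\prod_j\binom{\deg_{X_j}f}{\lfloor\deg_{X_j}f/2\rfloor}M(f)$ is a genuine theorem of Mahler that requires proof, not an immediate iteration of the univariate statement (the coefficients in an inner variable are polynomials whose Mahler measures do not individually match $M(f)$); it is fine to invoke it, but worth citing. Second, the lemma as stated in the paper allows $q=\deg g=0$, in which case the right-hand side is $0$ and the statement asserts exact additivity of $\log|\cdot|$ on $\Zz\setminus\{0\}$, which you correctly note.
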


\begin{proof}
See Bombieri and Gubler \cite[Lemma 1.6.11, pp. 27]{BoGu06}.
\end{proof}

We write
$\Yv =(X_{q+1}\kdots X_r)$ and $K_0(\Yv ):= K_0(X_{q+1}\kdots X_r)$, etc.
Given $f\in \Qq (X_1\kdots X_r)$ we denote by $f^*$ the rational function of
$K_0(\Yv )$ obtained by substituting $z_i$ for $X_i$ for $i=1\kdots q$ (and $f^*=f$ if $q=0$).
We view elements $f^*\in A_0[\Yv ]$ as polynomials in $\Yv$
with coefficients in $A_0$. We denote by $\deg_{\Yv} f^*$ the (total) degree of $f^*\in K_0[\Yv ]$
with respect to $\Yv$. We recall that $\deg g$ is defined for elements of $A_0$ and is
taken with respect to $z_1\kdots z_q$.  
With this notation, we can rewrite \eqref{3.-1}, \eqref{3.0} as
\begin{equation}\label{3.0a}
\left\{\begin{array}{l}
A\cong A_0[\Yv ]/(f_1^*\kdots f_m^*),
\\[0.1cm]
\deg_{\Yv} f_i^* \leq d\ \mbox{for } i=1\kdots m,
\\[0.1cm]
\mbox{the coefficients of $f_1^*\kdots f_m^*$ in $A_0$ have degrees at most $d$}
\\
\mbox{and logarithmic heights at most $h$.}
\end{array}\right.
\end{equation}
Put $D:= [K:K_0]$
and denote by $\sigma_1\kdots \sigma_D$ the $K_0$-
isomorphic embeddings of $K$ in an algebraic closure $\overline{K_0}$
of $K_0$.

\begin{lemma}\label{le:3.1}
(i) We have $D\leq d^t$.
\\
(ii) There exist integers $a_1\kdots a_t$ with $|a_i|\leq D^2$ for $i=1\kdots t$
such that for $w:=a_1y_1+\cdots +a_ty_t$ we have $K=K_0(w)$.
\end{lemma}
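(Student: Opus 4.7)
My plan is to treat (i) and (ii) separately. Part (i) rests on an affine Bezout-type degree bound applied to the reductions $f_i^*$, while (ii) is the effective primitive element theorem for $K/K_0$ via a standard lattice-point count.

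For part (i), the $K_0$-algebra map $\phi\colon K_0[\Yv]\to K$ sending $Y_{q+i}\mapsto y_i$ is surjective, because its image $K_0[y_1,\ldots,y_t]$ is a domain finitely generated over $K_0$ whose generators are algebraic over $K_0$, hence already a field equal to $K$. The kernel $\fp$ is therefore a maximal ideal of $K_0[\Yv]$ with
\[
D\;=\;[K:K_0]\;=\;\dim_{K_0}K_0[\Yv]/\fp.
\]
Each $f_i^*$ lies in $\fp$ and has total degree $\leq d$ by \eqref{3.0a}. Base-changing to $\overline{K_0}$, the radical ideal $\fp\cdot\overline{K_0}[\Yv]$ is the intersection of $D$ distinct maximal ideals $(Y_{q+k}-\sigma_j(y_k))_{k=1}^t$, so the Galois orbit of $(y_1,\ldots,y_t)$ consists of $D$ points of $\mathbb{A}^t_{\overline{K_0}}$ lying inside $V(f_1^*,\ldots,f_m^*)$. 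Using the infiniteness of $K_0\supseteq\Qq$, one selects $t$ generic $K_0$-linear combinations $g_1,\ldots,g_t$ of the $f_i^*$ whose common zero locus is zero-dimensional at the orbit while each $g_i$ still has degree $\leq d$; the affine Bezout inequality then gives $D\leq d^t$. This is the delicate point of the argument: if $V(f_1^*,\ldots,f_m^*)$ carries positive-dimensional components through the orbit then $\fp$ need not be a minimal prime of $(f_1^*,\ldots,f_m^*)$, and the construction of the $g_i$ must be justified via a refined affine Bezout inequality rather than the naive complete-intersection version.

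For part (ii), fix $(a_1,\ldots,a_t)\in\Zz^t$ and set $w:=\sum_{k=1}^t a_ky_k$. Then $K_0(w)=K$ iff the $D$ Galois conjugates $\sigma_j(w)$ are pairwise distinct, equivalently iff $\sum_k a_k(\sigma_i(y_k)-\sigma_j(y_k))\neq 0$ for every $1\leq i<j\leq D$. For fixed $i\neq j$, the vector $(\sigma_i(y_k)-\sigma_j(y_k))_k\in\overline{K_0}^t$ is nonzero, for otherwise $\sigma_i$ and $\sigma_j$ would agree on every $y_k$ and hence on $K_0(y_1,\ldots,y_t)=K$. Consequently the corresponding hyperplane $H_{ij}\subset\overline{K_0}^t$ contains at most $(2N+1)^{t-1}$ integer points with $|a_k|\leq N$ (solve for one distinguished coordinate). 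The number of bad integer tuples in $[-N,N]^t\cap\Zz^t$ is thus at most $\binom{D}{2}(2N+1)^{t-1}$. Taking $N=D^2$ we have $2D^2+1>\binom{D}{2}$, so this is strictly less than $(2N+1)^t$, and some integer tuple with $|a_k|\leq D^2$ avoids every $H_{ij}$ and yields the desired primitive element $w$.
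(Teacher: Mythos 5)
Your approach for both parts is essentially the paper's. Part (ii) is correct and spells out the counting argument the paper leaves implicit; the bound $\binom{D}{2}(2N+1)^{t-1}<(2N+1)^t$ with $N=D^2$ is exactly what is needed.

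For part (i), the caveat you raise at the end is in fact moot, and the reason lies in something you set up but did not exploit. You correctly observe that $\fp:=\ker(K_0[\Yv ]\to K)$ is maximal and that each $f_i^*$ lies in $\fp$, but you can say more: the ideal $(f_1^*\kdots f_m^*)K_0[\Yv ]$ \emph{equals} $\fp$. Indeed, $I$ is by definition the full kernel of $\Zz [X_1\kdots X_r]\to A$; under the isomorphism $\Zz [X_1\kdots X_r]\cong A_0[\Yv ]$ sending $X_i\mapsto z_i$ for $i\leq q$ and $X_{q+j}\mapsto Y_j$, $I$ corresponds to $(f_1^*\kdots f_m^*)$, which is therefore the full kernel of $A_0[\Yv ]\to A$; and localization at $A_0\setminus\{0\}$ is exact, so the extended ideal in $K_0[\Yv ]$ is exactly $\fp$. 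Consequently $\VV (f_1^*\kdots f_m^*)$ over $\overline{K_0}$ \emph{is} the $D$-point Galois orbit --- this is what the paper asserts when it says $\WW$ ``consists precisely of'' the images under $\sigma_1\kdots\sigma_D$ --- and there are no positive-dimensional components at all. With that in hand the generic-linear-combination Bezout argument runs exactly as in the paper: at each stage every irreducible component of $\VV (g_1\kdots g_i)$ has dimension $t-i\geq 1$, hence cannot lie inside the finite set $\WW$, so some $\overline{K_0}$-linear combination $g_{i+1}$ of the $f_j^*$ fails to vanish identically on any of them. You should close the gap you flagged by adding the one-line observation that $(f_1^*\kdots f_m^*)K_0[\Yv ]=\fp$; no refined Bezout inequality is required.
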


\begin{proof}
(i) The set 
\[
\WW := \{ \yv \in \overline{K_0}^t:\ f_1^*(\yv )=\cdots =f_m^*(\yv )=0\}
\]
consists precisely of the images of $(y_1\kdots y_t)$ under $\sigma_1\kdots\sigma_D$.
So we have to prove that $\WW$ has cardinality at most $d^t$.

In fact, this follows from a repeated application of B\'{e}zout's Theorem.
Given $g_1\kdots g_k\in K_0[\Yv ]$, we denote by $\VV(g_1\kdots g_k)$
the common set of zeros of $g_1\kdots g_k$ in $\overline{K_0}^t$.
Let $g_1:=f_1^*$.
Then by the version of B\'{e}zout's Theorem in Hartshorne 
\cite[p. 53, Thm. 7.7]{Har77},
the irreducible components of $\VV(g_1)$ have dimension $t-1$, 
and the sum of their degrees
is at most $\deg_{\Yv} g_1\leq d$. 
Take a $\overline{K_0}$-linear combination $g_2$ of $f_1^*\kdots f_m^*$
not vanishing identically on any of the irreducible components of $\VV (g_1)$.
For any of these components, say $\VV$, the intersection of $\VV$ and $\VV (g_2)$
is a union of irreducible
components, each of dimension $t-2$, whose degrees have sum at most 
$\deg_{\Yv} g_2\cdot\deg\VV\leq d\deg \VV$.
It follows that the irreducible components of $\VV (g_1,g_2)$ have dimension $t-2$
and that the sum of their degrees is at most $d^2$. 
Continuing like this, we see that
there are linear combinations $g_1\kdots g_t$ of $f_1^*\kdots f_m^*$ such that
for $i=1\kdots t$, the irreducible components of $\VV (g_1\kdots g_i)$ 
have dimension $d-i$
and the sum of their degrees is at most $d^i$. 
For $i=t$ it follows that $\VV (g_1\kdots g_t)$ is a set of at most $d^t$ points. 
Since $\WW\subseteq \VV(g_1\kdots g_t)$ this proves (i).

(ii) Let $a_1\kdots a_t$ be integers. Then $w:=\sum_{i=1}^t a_iy_i$
generates $K$ over $K_0$ if and only if
$\sum_{j=1}^t a_j\sigma_i(y_j)$ ($i=1\kdots D$) are distinct.
There are integers $a_i$ with $|a_i|\leq D^2$ for which this holds.
\end{proof}

\begin{lemma}\label{le:3.2}
There are $\GG_0\kdots \GG_D\in A_0$ such that
\begin{eqnarray}
\label{3.1}
&&\sum_{i=0}^D \GG_iw^{D-i}=0,\ \ \GG_0\GG_D\not= 0,
\\
\label{3.1a}
&&\deg\GG_i\leq (2d)^{\expr},\ \ \ h(\GG_i )\leq (2d)^{\expr}(h+1)\ \ \
(i=0\kdots D).
\end{eqnarray}
\end{lemma}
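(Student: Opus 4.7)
The plan is to realize the sought relation $\sum_{i=0}^D \GG_i w^{D-i}=0$ as a nontrivial solution of an explicit homogeneous linear system over $A_0$, and then to apply the effective linear algebra of Section \ref{2} to bound the sizes of the solutions. Existence of some nontrivial relation comes from $[K:K_0]=D$ (Lemma \ref{le:3.1}(i)), which forces $1,w,\ldots,w^D$ to be $K_0$-linearly dependent in $K$; lifting such a dependence to $K_0[\Yv]$ via the polynomial representative $\tilde w := \sum_{i=1}^t a_i Y_i$ of $w$ and clearing denominators in $A_0$, one obtains $\GG_0,\ldots,\GG_D\in A_0$ (not all zero) and $H_1,\ldots,H_m\in A_0[\Yv]$ satisfying
\[
\sum_{i=0}^D \GG_i \tilde w^{D-i} \;=\; \sum_{j=1}^m H_j f_j^* \qquad (\ast)
\]
in $A_0[\Yv]$.

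To convert $(\ast)$ into a finite linear system, I first bound the $\Yv$-degrees of the cofactors: by Proposition \ref{le:2.2}(ii) applied to ideal membership in $K_0[\Yv]$ with $N=t$ and degree bound $\max(D,d)\le d^r$, the $H_j$'s may be chosen with $\deg_{\Yv} H_j \le D_0 := (2md^r)^{2^t}$. Writing $H_j = \sum_{|\mathbf{k}|\le D_0}\beta_{j,\mathbf{k}}Y^{\mathbf{k}}$ and matching coefficients of $\Yv$-monomials in $(\ast)$ converts the relation into a homogeneous linear system over $A_0 = \Zz[z_1,\ldots,z_q]$ in the unknowns $\GG_i$ and $\beta_{j,\mathbf{k}}$, with $m^*\le \binom{D_0+d+t}{t}$ equations whose coefficients have $z$-degree $\le d$ and logarithmic height $\le h$ (the integer coefficients produced by $\tilde w^{D-i}$ being negligible by Lemma \ref{le:3.0}). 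Applying Corollary \ref{co:2.3} with $N=q$, the $\Qq[z_1,\ldots,z_q]$-module of solutions is generated by integer-coefficient vectors of $z$-degree $\le (2m^*d)^{2^q}$ and logarithmic height $\le (2m^*d)^{6^q}(h+1)$; using $2^q\cdot 2^t\le 2^r$ and $2^t\cdot 6^q\le 6^r$, these iterated bounds collapse into $(2d)^{\expr}$ and $(2d)^{\expr}(h+1)$ after the polynomial-in-$(t,q,m)$ factors are absorbed into $\expr$.

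Finally, to force $\GG_0\GG_D\ne 0$, I use that the $K_0$-space of all relations $\sum\gamma_i w^{D-i}=0$ in $K$ is one-dimensional, spanned by the minimal polynomial $\mu_w(X)=X^D+\mu_1X^{D-1}+\cdots+\mu_D$ of $w$ over $K_0$ (since $w$ is algebraic of degree exactly $D$). Consequently every nonzero $A_0$-solution has shape $(g,g\mu_1,\ldots,g\mu_D)$ for some $g\in A_0\setminus\{0\}$, automatically giving $\GG_0=g\ne 0$ and $\GG_D=g\mu_D\ne 0$, provided $\mu_D=(-1)^D N_{K/K_0}(w)\ne 0$; the latter follows from $w\ne 0$, arranged by choosing the $a_i$'s in Lemma \ref{le:3.1}(ii) with $w\ne 0$ (possible whenever the $y_i$'s do not all vanish). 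Any $(\GG_i)$-nontrivial generator produced by Corollary \ref{co:2.3} then meets every requirement of the lemma. The main obstacle will be the careful tracking of the iterated exponentials---particularly the $m$-factor in the Proposition \ref{le:2.2}(ii) bound, which propagates through both $D_0$ and $m^*$---so that absorbing this into $(2d)^{\expr}$ requires either an auxiliary reduction bounding $m$ in terms of $d,r$ (for instance via Proposition \ref{le:2.5}) or a direct verification that this polynomial growth is dominated by the doubly-iterated exponential.
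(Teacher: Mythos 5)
Your plan tracks the paper's argument closely: lift the $K_0$-linear dependence of $1,w,\ldots,w^D$ to an identity $\sum_i \GG_i W^{D-i}=\sum_j H_jf_j^*$ in $A_0[\Yv]$, bound the $\Yv$-degrees of the cofactors by Proposition \ref{le:2.2}(ii), recast the identity as a homogeneous linear system over $A_0$ by matching $\Yv$-monomial coefficients, and invoke Corollary \ref{co:2.3}. Your treatment of $\GG_0\GG_D\ne 0$ is in fact sharper than what the paper writes: you note that the $(\GG_0,\ldots,\GG_D)$-projection of the solution space is the one-dimensional $K_0$-span of $(1,\mu_1,\ldots,\mu_D)$, with $\mu_D=(-1)^DN_{K/K_0}(w)\ne 0$ once $w\ne 0$. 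This is exactly the observation that makes the paper's terse claim ``at least one generator must have $\GG_0\GG_D\ne 0$, otherwise contradicting (3.1)'' rigorous, since nonvanishing of a product is not a linear condition and one cannot argue directly from the generators. You also correctly flag that one must arrange $w\ne 0$ (relevant only when $D=1$), a point the paper glosses over.

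There is, however, a genuine error in the degree bookkeeping, and it is precisely the item you single out as ``the main obstacle.'' You apply Proposition \ref{le:2.2}(ii) with bound $D_0:=(2md^r)^{2^t}$, where $m$ is the number of generators $f_1,\ldots,f_m$ of $I$. But the parameter $m$ in Proposition \ref{le:2.2}(ii) is the number of \emph{rows} of the coefficient matrix (the number of scalar equations), not the number of unknowns. The ideal-membership relation $\sum_j H_j f_j^*=\sum_i\GG_i W^{D-i}$ is a \emph{single} equation in $R=K_0[\Yv]$ (a $1\times m$ matrix acting on $(H_1,\ldots,H_m)$), so the correct bound is $(2\cdot 1\cdot\max(d,D))^{2^t}\le(2d^t)^{2^t}$, with no $m$-factor at all; this is what the paper uses. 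The distinction is not cosmetic: $m$ admits no bound in terms of $d,r,h$ (one may pad $I$ with arbitrarily many redundant generators), and the dependence it would inject into the final bound is $m$ raised to a power that is itself exponential in $r$ via $m^*$ and the $2^q,6^q$-exponents in Corollary \ref{co:2.3}. Neither of your two proposed fixes works — there is no auxiliary reduction bounding $m$ by $d,r$, and the growth is not a polynomial-in-$m$ factor that can be swallowed by $(2d)^{\exp O(r)}$. The resolution is simply that the $m$-factor should not be there; once $D_0=(2\max(d,D))^{2^t}$ is used, $m^*\le\binom{D_0+d+t}{t}$ depends only on $d,t$, and the iterated bounds do collapse into $(2d)^{\exp O(r)}$ and $(2d)^{\exp O(r)}(h+1)$ as you describe.
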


\begin{proof}
In what follows we write 
$\Yv =(X_{q+1}\kdots X_r)$ and
$\Yv^{\uv}:=X_{q+1}^{u_1}\cdots X_{q+t}^{u_t}$,
$|\uv |:= u_1+\cdots +u_t$ for tuples of non-negative integers 
$\uv =(u_1\kdots u_t)$. Further, we define $W:=\sum_{j=1}^t a_jX_{q+j}$.

$\GG_0\kdots \GG_D$ as in \eqref{3.1} clearly exist since $w$
has degree $D$ over $K_0$.
By \eqref{3.0a}, there are 
$g_1^*\kdots g_m^*\in A_0[\Yv ]$
such that
\begin{equation}\label{3.2}
\sum_{i=0}^D \GG_iW^{D-i}=
\sum_{j=1}^m g_j^*f_j^*.
\end{equation}
By Proposition \ref{le:2.2} (ii), applied with the field $F=K_0$, 
there are polynomials
$g_j^*\in K_0[\Yv ]$ (so with coefficients being rational
functions in $\zv$) satisfying \eqref{3.2} of degree at most 
$(2\max(d,D))^{2^t}\leq (2d^t)^{2^t}=: d_0$ in $\Yv$.
By multiplying $\GG_0\kdots\GG_D$ with an appropriate non-zero factor from 
$A_0$
we may assume that the $g_j^*$ are polynomials in $A_0[\Yv ]$ of degree at most $d_0$ in $\Yv$.
By considering \eqref{3.2} with such polynomials $g_j^*$, 
we obtain
\begin{equation}\label{3.3}
\sum_{i=0}^D \GG_iW^{D-i}=
\sum_{j=1}^m \Big(\sum_{|\uv |\leq d_0} g_{j,\uv}\Yv^{\uv}\Big)\cdot
\Big(\sum_{|\vv |\leq d} f_{j,\vv}\Yv^{\vv}\Big),
\end{equation}
where $g_{j,\uv}\in A_0$
and $f_j^*=\sum_{|\vv |\leq d} f_{j,\vv}\Yv^{\vv}$
with $f_{j,\vv}\in A_0$.
We view $\GG_0\kdots\GG_D$ and the polynomials $g_{j,\uv}$ 
as the unknowns of \eqref{3.3}.
Then \eqref{3.3} has solutions with $\GG_0\GG_D\not= 0$. 
 
We may view \eqref{3.3} as a system of linear equations 
$\AA\xv =\nullv$ over $K_0$, where
$\xv$ consists of 
$\GG_i$ ($i=0\kdots D$) and
$g_{j,\uv}$ ($j=1\kdots m$, $|\uv |\leq d_0$).
By Lemma \ref{le:3.1} and an elementary estimate,
the polynomial $W^{D-i}=(\sum_{k=1}^t a_kX_{q+k})^{D-i}$
has logarithmic height at most 
$O(D\log (2D^2t))\leq (2d)^{O(t)}$. By combining this with \eqref{3.0a}, 
it follows that the entries of the matrix $\AA$ are elements of $A_0$ 
of degrees at most $d$  
and logarithmic heights at most $h_0:=\max ((2d)^{O(t)},h)$.
Further, the number of rows of $\AA$ is at most the number of monomials
in $\Yv$ of degree at most $d_0+d$ which is bounded above by 
$m_0:=\binom{d_0+d+t}{t}$.
So by Corollary \ref{co:2.3}, the solution module of \eqref{3.3} is generated
by vectors $\xv =(\GG_0\kdots \GG_D,\,\{ g_{i,\uv}\})$, consisting
of elements from $A_0$ of degree and height at most
\[
\big(2m_0d\big)^{2^q}\leq (2d)^{\expr},\ \ 
\big(2m_0d\big)^{6^q}(h_0+1)\leq (2d)^{\expr}(h+1),
\]
respectively.

At least one of these vectors $\xv$ must have $\GG_0\GG_D\not= 0$ 
since otherwise \eqref{3.3} would have no solution with $\GG_0\GG_D\not= 0$, 
contradicting \eqref{3.1}.
Thus, there exists a solution $\xv$ whose components $\GG_0\kdots\GG_D$ 
satisfy both \eqref{3.1}, \eqref{3.1a}.
This proves our lemma.
\end{proof}

It will be more convenient to work with
\[
y:=\GG_0w=\GG_0\cdot (a_1y_1+\cdots +a_ty_t).
\]
In the case $D=1$ we set $y:=1$.
The following properties of $y$ follow at once from Lemmas \ref{le:3.0}--\ref{le:3.2}.

\begin{corollary}\label{co:3.3}
We have 
$K=K_0(y)$, $y\in A$, $y$ is integral over $A_0$, and $y$ has minimal polynomial
$\FF (X)=X^D+\FF_1X^{D-1}+\cdots +\FF_D$ over $K_0$ with
\[
\FF_i\in A_0,\ \ \deg \FF_i\leq (2d)^{\expr},
\ h(\FF_i)\leq (2d)^{\expr}(h+1)
\]
for $i=1\kdots D$.
\end{corollary}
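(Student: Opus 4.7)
The plan is to read off the minimal polynomial of $y = \GG_0 w$ directly from the relation produced in Lemma \ref{le:3.2}, then trace the degree and height bounds through the resulting multiplication.

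First I would handle the case $D=1$ separately: here $y:=1$, $K=K_0$, and $y$ trivially satisfies $X-1=0$, so the statement is immediate. So assume $D\geq 2$.

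Next, starting from
\[
\GG_0 w^D+\GG_1 w^{D-1}+\cdots +\GG_D=0
\]
with $\GG_0\neq 0$ from Lemma \ref{le:3.2}, I would substitute $w=y/\GG_0$ and clear denominators by multiplying through by $\GG_0^{D-1}$, obtaining
\[
y^D+\FF_1 y^{D-1}+\FF_2 y^{D-2}+\cdots +\FF_D=0,\qquad \FF_i:=\GG_0^{i-1}\GG_i\in A_0.
\]
Since $\GG_0\in A_0^*\subset K_0^*$ we have $K_0(y)=K_0(\GG_0 w)=K_0(w)=K$ by Lemma \ref{le:3.1}(ii), so $y$ has degree exactly $D=[K:K_0]$ over $K_0$ and the monic polynomial displayed above is therefore its minimal polynomial. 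Moreover $y=\GG_0(a_1y_1+\cdots +a_ty_t)\in A$, and the integrality of $y$ over $A_0$ is witnessed by this very relation.

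It remains to bound $\deg\FF_i$ and $h(\FF_i)$. Using $D\leq d^t\leq (2d)^{\expr}$ from Lemma \ref{le:3.1}(i) and the bounds $\deg\GG_i\leq (2d)^{\expr}$, $h(\GG_i)\leq (2d)^{\expr}(h+1)$ from Lemma \ref{le:3.2}, I would first estimate
\[
\deg\FF_i\leq (i-1)\deg\GG_0+\deg\GG_i\leq D\cdot (2d)^{\expr}\leq (2d)^{\expr}.
\]
Then for the logarithmic height I would apply Lemma \ref{le:3.0} to the product $\FF_i=\GG_0^{i-1}\GG_i$, which gives
\[
h(\FF_i)\leq (i-1)h(\GG_0)+h(\GG_i)+q\deg\FF_i\leq D\cdot (2d)^{\expr}(h+1)+q(2d)^{\expr}\leq (2d)^{\expr}(h+1),
\]
absorbing all prefactors into the constant in $\expr$ (recall $q\leq r$).

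There is no real obstacle here: the content lies entirely in Lemma \ref{le:3.2}, and what remains is the algebraic identity converting the relation for $w$ into a monic one for $y=\GG_0w$, together with the routine degree/height bookkeeping. The only mild care point is to invoke Lemma \ref{le:3.0} (rather than naive multiplicativity) so that the height estimate survives uniformly across $q\in\{0,1,\ldots,r\}$.
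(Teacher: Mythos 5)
Your proof is correct and is exactly the argument the paper leaves implicit when it says the corollary ``follows at once from Lemmas~\ref{le:3.0}--\ref{le:3.2}'': substitute $w=y/\GG_0$, multiply by $\GG_0^{D-1}$ to get $\FF_i=\GG_0^{i-1}\GG_i$, and push the bounds through via Lemma~\ref{le:3.0} together with $D\leq d^t$. One small slip: $\GG_0$ is a nonzero element of $A_0$, not an element of $A_0^*$ (the units of $A_0=\Zz[z_1\kdots z_q]$ are just $\pm 1$); what you need, and what is true, is $\GG_0\in A_0\setminus\{0\}\subset K_0^*$, so $K_0(\GG_0 w)=K_0(w)$ still holds and the rest of the argument is unaffected.
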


Recall that $A_0=\Zz$ if $q=0$ and $\Zz [z_1\kdots z_q]$ if $q>0$, where in the latter case,
$z_1\kdots z_q$ are algebraically independent.
Hence $A_0$ is a unique factorization domain,
and so 
the gcd of a finite set of elements of $A_0$ is well-defined
and up to sign uniquely determined. With every element $\alpha\in K$ 
we can associate an up to sign unique tuple 
$P_{\alpha ,0}\kdots P_{\alpha ,D-1},Q_{\alpha}$ of elements of $A_0$ such that
\begin{equation}\label{3.4}
\alpha = Q_{\alpha}^{-1}\sum_{j=0}^{D-1} P_{\alpha ,j}y^j\ \ 
\mbox{with } 
Q_{\alpha}\not= 0,\ \gcd (P_{\alpha ,0}\kdots P_{\alpha ,D-1},Q_{\alpha})=1.
\end{equation}
Put
\begin{equation}\label{3.4b}
\left\{\begin{array}{l}
\Deg \alpha := \max (\deg P_{\alpha ,0}\kdots \deg P_{\alpha ,D-1}, 
\deg Q_{\alpha}),
\\[0.15cm]
\Height (\alpha ):= 
\max \big( h(P_{\alpha ,0})\kdots h(P_{\alpha ,D-1}),h(Q_{\alpha})\big)
\end{array}\right. .
\end{equation}
Then for $q=0$ we have $\Deg\alpha =0$, 
$\Height (\alpha )=\log\max \big(|P_{\alpha ,0}|\kdots |P_{\alpha ,D-1}|,|Q_{\alpha}|\big)$.

\begin{lemma}\label{le:3.4}
Let $\alpha\in K^*$ and let $(a,b)$ be a pair of representatives for $\alpha$,
with $a,b\in\Zz [X_1\kdots X_r]$, $b\not\in I$. 
Put $d^*:=\max (d,\deg a , \deg b)$, $h^*:=\max (h,h(a),h(b))$.
Then
\begin{equation}
\label{3.4.a}
\Deg\alpha \leq (2d^*)^{\expr},\ \ \Height (\alpha )\leq (2d^*)^{\expr} (h^*+1).
\end{equation}
\end{lemma}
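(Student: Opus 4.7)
By Corollary~\ref{co:3.3}, $K = K_0(y)$ has the $K_0$-basis $\{1, y, \ldots, y^{D-1}\}$, so $\alpha = a(z)/b(z)$ admits an expansion $\alpha = \sum_{j=0}^{D-1} c_j y^j$ with $c_j \in K_0$. Clearing a common denominator produces a (not necessarily coprime) representation $\alpha = Q^{-1} \sum_j P_j y^j$ with $Q, P_j \in A_0$ and $Q \neq 0$. Passing from this to the canonical coprime representation $(Q_\alpha, P_{\alpha, j})$ divides $Q$ and each $P_j$ by a single factor $C \in A_0$, which cannot increase degrees; moreover, since $h(C) \geq 0$, Lemma~\ref{le:3.0} gives $h(Q_\alpha) \leq h(Q) + q\deg Q$ and $h(P_{\alpha,j}) \leq h(P_j) + q\deg P_j$. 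It therefore suffices to exhibit some representation $(Q, P_j)$ with $Q, P_j \in A_0$, $Q \neq 0$, satisfying $\max(\deg Q, \deg P_j) \leq (2d^*)^{\expr}$ and $\max(h(Q), h(P_j)) \leq (2d^*)^{\expr}(h^*+1)$.

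Setting $Y := G_0(X_1, \ldots, X_q) \cdot (a_1 X_{q+1} + \cdots + a_t X_r) \in A_0[\Yv]$ for the polynomial representing $y$, the relation $Q \alpha = \sum_j P_j y^j$ in $K$ lifts to the polynomial identity
\[
Q \cdot a - b \cdot \sum_{j=0}^{D-1} P_j \, Y^j = \sum_{k=1}^m g_k f_k
\]
in $\Zz[X_1, \ldots, X_r]$, for some certificates $g_k \in \Zz[X_1, \ldots, X_r]$. By the first paragraph, this identity admits a solution with $Q, P_j \in A_0$ and $Q \neq 0$. I will bound such a solution in two stages.

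\emph{Stage 1.} View the identity over $K_0[\Yv]$, treating $Q, P_j$ as $K_0$-constants in $\Yv$ and $g_k$ as elements of $K_0[\Yv]$. The coefficient polynomials $a$, $bY^j$, $f_k$ have $\Yv$-degree at most $d + D - 1 \leq 2d^t$ by Lemma~\ref{le:3.1}(i), so Proposition~\ref{le:2.2}(i) over the field $K_0$ with $N = t$ shows that the $K_0[\Yv]$-solution module is generated by vectors whose $g_k$-components have $\Yv$-degree at most $D_1 := (4d^t)^{2^t} \leq (2d)^{\expr}$. We may therefore restrict attention to solutions with $\deg_\Yv g_k \leq D_1$. \emph{Stage 2.} Expanding in $\Yv$-monomials and matching coefficients converts the identity into a homogeneous $A_0$-linear system in the unknowns $Q, P_0, \ldots, P_{D-1}$ and the $A_0$-coefficients $g_{k,\mu}$ of $g_k = \sum_{|\mu| \leq D_1} g_{k,\mu} \Yv^\mu$. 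Using Corollary~\ref{co:3.3} to control $G_0$ (hence the $Y^j$) and Lemma~\ref{le:3.0} to control the products $bY^j$, one verifies that this system has at most $(2d)^{\expr}$ equations and that its coefficient matrix has entries in $A_0$ of degree $\leq (2d^*)^{\expr}$ and height $\leq (2d^*)^{\expr}(h^*+1)$. Corollary~\ref{co:2.3} applied with $N = q$ then produces generators of the $\Qq[X_1, \ldots, X_q]$-module of solutions whose entries in $A_0$ satisfy the same bounds. Because the known solution has nonzero $Q$-component, so must at least one generator; its $(Q, P_j)$-coordinates give the required bounded representation.

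The principal obstacle is the bookkeeping: one must verify that the doubly exponential bound $(4d^t)^{2^t}$ from Stage 1 and the bounds of the form $(\cdot)^{2^q}, (\cdot)^{6^q}$ from Stage 2 combine, via $r = q + t$ and $D \leq d^t$, into the single $(2d^*)^{\expr}$-type bound on $\Deg \alpha$ and $\Height \alpha$, and that the passage from a non-coprime to the coprime representation (controlled by Lemma~\ref{le:3.0}) preserves this bound.
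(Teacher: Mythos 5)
Your plan follows the paper's proof closely --- lift $Q\alpha = \sum_j P_j y^j$ to a membership certificate modulo $(f_1^*, \ldots, f_m^*)$, bound the $\Yv$-degrees of the certificates, linearize over $K_0$ and invoke Corollary~\ref{co:2.3}, then pass to the coprime $(Q_\alpha, P_{\alpha,j})$ via Lemma~\ref{le:3.0} --- but Stage~1 has a genuine gap. You invoke Proposition~\ref{le:2.2}(i), the homogeneous statement, to conclude that the $K_0[\Yv]$-solution module is generated by vectors with $g_k$-components of $\Yv$-degree at most $D_1$, and hence that you may ``restrict attention to solutions with $\deg_{\Yv} g_k \leq D_1$.'' That restriction is not justified: Proposition~\ref{le:2.2}(i) bounds the degrees of a generating set of the full solution module over $K_0[\Yv]$, but those generators have $Q$- and $P_j$-components that are arbitrary polynomials in $\Yv$, not $K_0$-constants. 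The particular solution you need --- the one with $Q, P_0, \ldots, P_{D-1} \in A_0$ and $Q \neq 0$, which you know exists --- is some $K_0[\Yv]$-linear combination of those generators with combining coefficients of unbounded degree, so nothing bounds its $g_k$-components, and you cannot yet assert that the degree-restricted system handed to Stage~2 contains any solution with $Q \neq 0$. Your closing step (``because the known solution has nonzero $Q$-component, so must at least one generator'') therefore rests on an unestablished premise.

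The paper closes this by applying Proposition~\ref{le:2.2}(ii), the inhomogeneous case, with $Q, P_0, \ldots, P_{D-1}$ \emph{fixed}: since a solution with $Q \neq 0$ exists, the element $Qa^* - b^*\sum_j P_j Y^j$ lies in $(f_1^*, \ldots, f_m^*)$, and part (ii) then supplies certificates $g_k^*$ of $\Yv$-degree at most $(2\max(d^*, D))^{2^t}$, after possibly rescaling $(P_0, \ldots, P_{D-1}, Q)$ by a nonzero element of $A_0$ to clear denominators. With that existence statement in hand, the restricted system does contain a solution with $Q \neq 0$, and your Stage~2 together with the gcd step goes through exactly as in the paper. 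As a minor point, $a^*$ and $b^*Y^j$ have $\Yv$-degree up to $d^* + D - 1$ rather than $d + D - 1$, so the Stage~1 degree bound should be expressed in terms of $d^*$, as in the paper's $(2\max(d^*,D))^{2^t}$; this does not affect the shape of the final estimate.
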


\begin{proof}
Consider the linear equation
\begin{equation}\label{3.5}
Q\cdot \alpha =\sum_{j=0}^{D-1} P_jy^j
\end{equation}
in unknowns $P_0\kdots P_{D-1},Q\in A_0$. This equation has a solution
with $Q\not= 0$, since $\alpha\in K=K_0(y)$ and $y$ has degree $D$ over $K_0$.
Write again $\Yv =(X_{q+1}\kdots X_r)$ and put $Y:=\GG_0\cdot (\sum_{j=1}^t a_jX_{q+j})$.
Let $a^*,\, b^*\in A_0[\Yv ]$ be obtained from $a,b$ by substituting
$z_i$ for $X_i$ for $i=1\kdots q$ ($a^*=a$, $b^*=b$ if $q=0$). 
By \eqref{3.0a}, there are $g_j^*\in A_0[\Yv ]$ 
such that
\begin{equation}\label{3.6}
Q\cdot a^*-b^*\sum_{j=0}^{D-1} P_jY^j = \sum_{j=1}^m g_j^*f_j^*.
\end{equation} 
By Proposition \ref{le:2.2} (ii) this identity holds 
with polynomials $g_j^*\in A_0[\Yv ]$ of degree in $\Yv$ at most
$(2\max (d^* ,D))^{2^t}\leq (2d^*)^{t2^t}$, where possibly we have to multiply
$(P_0\kdots P_{D-1},Q)$ with a non-zero element from $A_0$. 
Now completely similarly as in 
the proof of Lemma \ref{le:3.2}, one can rewrite \eqref{3.6} 
as a system of linear equations over $K_0$
and then apply Corollary \ref{co:2.3}. 
It follows that \eqref{3.5} is satisfied by $P_0\kdots P_{D-1},Q\in A_0$ 
with $Q\not= 0$ and 
\begin{eqnarray*}
&&\deg P_i,\, \deg Q\leq (2d^*)^{\expr},\\
&&h(P_i),\, h(Q)\leq (2d^*)^{\expr}(h^*+1)\ \ (i=0\kdots D-1).
\end{eqnarray*}
By dividing $P_0\kdots P_{D-1},Q$ by their gcd and using Lemma \ref{le:3.0} we obtain
$P_{\alpha,0}\kdots P_{D-1,\alpha},Q_{\alpha}\in A_0$
satisfying both \eqref{3.4} and
\begin{eqnarray*}
&&\deg P_{i,\alpha },\, \deg Q_{\alpha}\leq (2d^*)^{\expr},\\
&&h(P_{i,\alpha }),\, h(Q_{\alpha})\leq (2d^*)^{\expr}(h^*+1)\ \ 
(i=0\kdots D-1).
\end{eqnarray*}
\end{proof}

\begin{lemma}\label{le:3.4a}
Let $\alpha_1\kdots \alpha_n\in K^*$. For $i=1\kdots n$, let
$(a_i,b_i)$ be a pair of representatives for $\alpha_i$, with
$a_i,b_i\in\Zz [X_1\kdots X_r]$, $b_i\not\in I$. Put
\begin{eqnarray*}
d^{**}&:=&\max (d,\deg a_1,\deg b_1\kdots \deg a_n,\deg b_n),
\\[0.1cm]
h^{**}&:=&\max \big(h,h(a_1),h(b_1)\kdots h(a_n),h(b_n)\big).
\end{eqnarray*}
Then there is a non-zero $f\in A_0$ such that
\begin{eqnarray}
\label{3.x1}
&&A\subseteq A_0[y,f^{-1}],\  \alpha_1\kdots\alpha_n\in A_0[y,f^{-1}]^*,
\\[0.15cm]
\label{3.x2}
&&\deg f\leq (n+1)(2d^{**})^{\expr},\  h(f)\leq (n+1)(2d^{**})^{\expr}(h^{**}+1).
\end{eqnarray}
\end{lemma}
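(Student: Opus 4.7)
The plan is to build $f$ explicitly as a product of the $A_0$-denominators supplied by Lemma \ref{le:3.4}, one factor for each element of $K^*$ that needs to become a unit (or just an element) of $A_0[y,f^{-1}]$. I will apply Lemma \ref{le:3.4} to the $2n+t$ elements $\alpha_1,\dots,\alpha_n,\alpha_1^{-1},\dots,\alpha_n^{-1},y_1,\dots,y_t$, using for $\alpha_i$ the given pair $(a_i,b_i)$, for $\alpha_i^{-1}$ the pair $(b_i,a_i)$ (legitimate because $\alpha_i\in K^*$ forces $a_i\notin I$), and for $y_k=z_{q+k}$ the trivial pair $(X_{q+k},1)$. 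Each application produces a representation
\[
\alpha=Q_\alpha^{-1}\sum_{j=0}^{D-1}P_{\alpha,j}\,y^j,\qquad Q_\alpha\in A_0\setminus\{0\},
\]
with $\deg Q_\alpha\le (2d^{**})^{\expr}$ and $h(Q_\alpha)\le (2d^{**})^{\expr}(h^{**}+1)$.

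I then define
\[
f:=\prod_{i=1}^n Q_{\alpha_i}Q_{\alpha_i^{-1}}\cdot\prod_{k=1}^t Q_{y_k}.
\]
Since each of the $2n+t$ factors divides $f$ in the UFD $A_0$, each of $y_k,\alpha_i,\alpha_i^{-1}$ lies in $A_0[y,f^{-1}]$; consequently $A=A_0[y_1,\dots,y_t]\subseteq A_0[y,f^{-1}]$, and each $\alpha_i$ is automatically a unit in $A_0[y,f^{-1}]$, establishing \eqref{3.x1}.

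For \eqref{3.x2} I would sum sizes over the $2n+t$ factors. The degree estimate is immediate: at most $2n(2d^{**})^{\expr}+t(2d)^{\expr}$, which, using $t\le r$ to absorb the second term into the constant hidden in $\expr$ and the inequality $2n+1\le 2(n+1)$, collapses to $(n+1)(2d^{**})^{\expr}$. For the height, Lemma \ref{le:3.0} contributes an extra $q\deg f$ term that is absorbed in the same way via $q\le r$, yielding the stated bound. The whole argument is essentially bookkeeping once Lemma \ref{le:3.4} is in place; the only conceptual point worth flagging is that taking a single $f\in A_0$ divisible by each individual denominator $Q_\alpha$ already places $\alpha$ in $A_0[y,f^{-1}]$, so handling $\alpha_i$ and $\alpha_i^{-1}$ symmetrically immediately takes care of the ``unit'' condition without any need for a norm-type factorization. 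I anticipate no substantive obstacle.
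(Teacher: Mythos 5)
Your proposal is correct and follows essentially the same route as the paper: the same choice $f=\prod_{i=1}^t Q_{y_i}\cdot\prod_{j=1}^n Q_{\alpha_j}Q_{\alpha_j^{-1}}$, the same appeal to Lemma \ref{le:3.4} for the individual denominators and to Lemma \ref{le:3.0} for the height of the product, and the same observation that $Q_\beta\beta\in A_0[y]$ together with $Q_\beta\mid f$ places $\beta$ in $A_0[y,f^{-1}]$.
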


\begin{proof} 
Take
\[
f:=\prod_{i=1}^t Q_{y_i}\cdot \prod_{j=1}^n\big( Q_{\alpha_i}Q_{\alpha_i^{-1}}\big).
\]
Since in general, $Q_{\beta}\beta\in A_0[y]$ for $\beta\in K^*$, we have
$f\beta\in A_0[y]$ for $\beta =y_1\kdots y_t,\alpha_1,\alpha_1^{-1}\kdots\alpha_n,\alpha_n^{-1}$.
This implies \eqref{3.x1}.
The inequalities \eqref{3.x2} follow at once from Lemmas \ref{le:3.4} and \ref{le:3.0}.
\end{proof} 

\begin{lemma}\label{le:3.5}
Let $\lambda\in K^*$ and let $\ve$ be a non-zero element of $A$.
Let $(a,b)$ with $a,b\in\Zz [X_1\kdots X_r]$ be a pair of representatives for $\lambda$.
Put
\begin{eqnarray*}
d_0&:=&\max (\deg f_1\kdots \deg f_m ,\deg a,\deg b, \Deg \lambda \ve ),
\\[0.15cm]
h_0&:=&\max \big(h(f_1)\kdots h(f_m) ,h(a),h(b), \Height ( \lambda\ve)\,\big).
\end{eqnarray*}
Then $\ve$ has a representative $\widetilde{\ve}\in\Zz [X_1\kdots X_r]$ such that
\[
\deg\widetilde{\ve}\leq (2d_0)^{\exp O(r\log^* r)}(h_0+1),\ \
h(\widetilde{\ve})\leq (2d_0)^{\exp O(r\log^* r)}(h_0+1)^{r+1}. 
\]
If moreover $\ve\in A^*$, then $\ve^{-1}$ has a representative $\widetilde{\ve}'\in\Zz [X_1\kdots X_r]$
with
\[
\deg \widetilde{\ve}'\leq (2d_0)^{\exp O(r\log^* r)}(h_0+1),\ \
h(\widetilde{\ve}')\leq (2d_0)^{\exp O(r\log^* r)}(h_0+1)^{r+1}. 
\]
\end{lemma}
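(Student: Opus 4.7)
The plan is to encode the identity defining $\ve$ (respectively $\ve^{-1}$) as a solvable inhomogeneous linear equation over $\Zz[X_1,\ldots,X_r]$ whose coefficients and right-hand side have degree $\leq (2d_0)^{\expr}$ and height $\leq (2d_0)^{\expr}(h_0+1)$, and then apply Proposition \ref{le:2.5} with $N=r$ to extract a representative of the required size.

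Using \eqref{3.4}, write $\lambda\ve = Q^{-1}\sum_{j=0}^{D-1} P_j y^j$ with $P_j,Q\in A_0$ of degree $\leq d_0$ and height $\leq h_0$, and set $Y := \GG_0\cdot(a_1 X_{q+1}+\cdots+a_t X_{q+t})\in\Zz[X_1,\ldots,X_r]$, where $\GG_0$ and $a_1,\ldots,a_t$ come from Lemma \ref{le:3.2}; by construction $Y$ is a polynomial representative of $y$, and Corollary \ref{co:3.3} gives $\deg Y, h(Y) \leq (2d)^{\expr}(h+1) \leq (2d_0)^{\expr}(h_0+1)$. Since the pair $(a,b)$ represents $\lambda$, the relation $\lambda\ve = Q^{-1}\sum_j P_jy^j$ rearranges in $A$ to $a(z_1,\ldots,z_r)\cdot Q(z_1,\ldots,z_q)\cdot\ve = b(z_1,\ldots,z_r)\sum_j P_j(z_1,\ldots,z_q)\cdot y^j$, which says that for any polynomial representative $W_0$ of $\ve$ the polynomial $aQ\cdot W_0 - b\sum_j P_jY^j$ lies in $I=(f_1,\ldots,f_m)$. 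Hence the linear equation
\[
aQ\cdot W + \sum_{k=1}^m f_k\cdot U_k = b\sum_{j=0}^{D-1} P_jY^j
\]
is solvable in unknowns $W, U_1,\ldots,U_m\in\Zz[X_1,\ldots,X_r]$. Combining the bounds above with Lemma \ref{le:3.0} and $D\leq d^t\leq (2d_0)^r$, one checks that every coefficient and the right-hand side satisfies the degree/height estimates stated in the opening paragraph, so Proposition \ref{le:2.5} yields a solution $W$ with $\deg W\leq (2d_0)^{\exp O(r\log^* r)}(h_0+1)$ and $h(W)\leq (2d_0)^{\exp O(r\log^* r)}(h_0+1)^{r+1}$. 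Set $\widetilde{\ve}:=W$.

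To verify $\widetilde{\ve}(z_1,\ldots,z_r)=\ve$, specialize the displayed equation at $X_i=z_i$: the $f_k$-terms vanish and one is left with $a(z)Q(z)\widetilde{\ve}(z) = b(z)\sum_j P_j(z)y^j = a(z)Q(z)\ve$; division by $a(z)Q(z)\neq 0$ (nonzero since $\lambda\in K^*$ forces $a(z)\neq 0$, and the algebraic independence of $z_1,\ldots,z_q$ forces $Q(z)\neq 0$) yields the claim. For the case $\ve\in A^*$, the identity $\big(b\sum_j P_jy^j\big)\ve^{-1} = aQ$ in $A$ leads to the analogous equation
\[
\Big(b\sum_{j=0}^{D-1} P_jY^j\Big)\cdot W + \sum_{k=1}^m f_k\cdot U_k = aQ,
\]
to which the same estimates and Proposition \ref{le:2.5} apply; non-vanishing of $b(z)\sum_j P_j(z)y^j = b(z)Q(z)\lambda\ve$ (using $\lambda\ve\neq 0$) lets one recover $\widetilde{\ve}'(z)=\ve^{-1}$. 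The main obstacle is purely bookkeeping: one must verify that the high powers $Y^j$ (with $j$ possibly as large as $d^t$) keep the coefficients inside the regime $(2d_0)^{\expr}(h_0+1)$, and then that feeding these parameters into the $\exp O(N\log^* N)$ exponent of Proposition \ref{le:2.5} collapses with the $\exp O(r)$ of Corollary \ref{co:3.3} to yield the final $\exp O(r\log^* r)$ in the conclusion; no further conceptual input is needed.
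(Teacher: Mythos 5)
Your proof is correct and follows essentially the same route as the paper: rewrite $\lambda\ve$ via \eqref{3.4}, set $Y=\GG_0(a_1X_{q+1}+\cdots+a_tX_{q+t})$, encode the equation $\widetilde{\ve}\cdot(Qa)+\sum_k g_kf_k=b\sum_jP_jY^j$ (and its analogue for $\ve^{-1}$) as a solvable inhomogeneous linear equation over $\Zz[X_1\kdots X_r]$ with coefficients of size $(2d_0)^{\expr}$, $(2d_0)^{\expr}(h_0+1)$, and invoke Proposition \ref{le:2.5}. The only minor slip is attributing the degree/height bounds for $Y$ to Corollary \ref{co:3.3}; they actually come from Lemma \ref{le:3.2} (for $\GG_0$) and Lemma \ref{le:3.1} (for the $a_i$), but this does not affect the argument.
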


\begin{proof}
In case that $q>0$, we identify $z_i$ with $X_i$ and view elements of $A_0$ 
as polynomials in $\Zz [X_1\kdots X_q]$.
Put $Y:=\GG_0\cdot (\sum_{i=1}^t a_iX_{q+i})$.
We have
\begin{equation}\label{3.9}
\lambda\ve =Q^{-1}\sum_{i=0}^{D-1} P_iy^i
\end{equation}
with $P_0\kdots P_{D-1},Q\in A_0$
and $\gcd (P_0\kdots P_{D-1},Q)=1$.
According to \eqref{3.9}, $\widetilde{\ve}\in\Zz [X_1\kdots X_r]$ 
is a representative for $\ve$ if and only if
there are $g_1\kdots g_m\in\Zz [X_1\kdots X_r]$ such that
\begin{equation}\label{3.10}
\widetilde{\ve}\cdot (Q\cdot a) +\sum_{i=1}^m g_if_i =
b\sum_{i=0}^{D-1} P_iY^i.
\end{equation}
We may view \eqref{3.10} as an inhomogeneous linear equation in the unknowns
$\widetilde{\ve},g_1\kdots g_m$. 
Notice that by Lemmas \ref{le:3.1}--\ref{le:3.4}
the degrees and logarithmic heights of $Qa$
and $b\sum_{i=0}^{D-1} P_iY^i$
are all bounded above by
$(2d_0)^{\expr}$, $(2d_0)^{\expr}(h_0+1)$, respectively. Now Proposition \ref{le:2.5}
implies that \eqref{3.10} has a solution with upper bounds for $\deg\widetilde{\ve}$,
$h(\widetilde{\ve})$ as stated in the lemma.

Now suppose that $\ve\in A^*$. 
Again by \eqref{3.9}, 
$\widetilde{\ve}'\in\Zz [X_1\kdots X_r]$ is a representative for $\ve^{-1}$
if and only if there
are $g_1'\kdots g_m'\in\Zz [X_1\kdots X_r]$ such that
\[
\widetilde{\ve}'\cdot b\sum_{i=0}^{D-1} P_iY^i +\sum_{i=1}^m g_i'f_i
=Q\cdot a.
\]
Similarly as above, this equation has a solution with upper bounds for 
$\deg\widetilde{\ve}'$, $h(\widetilde{\ve}')$ as stated in the lemma.
\end{proof}

Recall that we have defined $A_0=\Zz [z_1\kdots z_q]$, $K_0=\Qq (z_1\kdots z_q)$ if $q>0$
and $A_0=\Zz$, $K_0=\Qq$ if $q=0$, and that in the case $q=0$, 
degrees and $\Deg$-s are always zero.
Theorem \ref{th:1.1} can be deduced from the following Proposition,
which makes sense also if $q=0$.
The proof of this Proposition is given in Sections \ref{4}--\ref{6}.

\begin{proposition}\label{pr:3.6}
Let $f\in A_0$ with $f\not=0$, and let
\[
\FF =X^D+\FF_1X^{D-1}+\cdots +\FF_D\in A_0[X]\ \ (D\geq 1)
\]
be the minimal polynomial of $y$ over $K_0$.
Let $d_1\geq 1$, $h_1\geq 1$ and suppose 
\[
\max (\deg f ,\deg\FF_1\kdots\deg\FF_D)\leq d_1,\ \ 
\max (h(f) ,h(\FF_1)\kdots h(\FF_D))\leq h_1.
\]
Define the domain $B:=A_0[y,f^{-1}]$.
Then for each pair
$(\ve_1,\eta_1)$ with 
\begin{equation}\label{3.12}
\ve_1+\eta_1=1,\ \ \ \ve_1,\eta_1\in B^*
\end{equation}
we have
\begin{eqnarray}
\label{3.13}
&&\Deg \ve_1 ,\Deg \eta_1 \leq 4qD^2\cdot d_1,
\\[0.15cm]
\label{3.14}
&&\Height (\ve_1) , \Height (\eta_1)\leq \exp O\Big( 2D(q+d_1)\log^* \{2D(q+d_1)\}+Dh_1\Big).
\end{eqnarray}
\end{proposition}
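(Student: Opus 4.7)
The plan follows the two-step strategy announced in the introduction: first bound the polynomial-degree quantities $\Deg \ve_1, \Deg \eta_1$ using Mason's effective theorem for two-term $S$-unit equations over one-variable function fields, and then bound the logarithmic-height quantities $\Height (\ve_1), \Height (\eta_1)$ by specializing the transcendental generators $z_1,\ldots,z_q$ to algebraic numbers and invoking an effective $S$-unit theorem over number fields such as Gy\H{o}ry--Yu \cite{GyYu06}. When $q=0$ only the second step is needed and $\Deg = 0$ is automatic.

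For the degree bound, view $\ve_1+\eta_1 = 1$ inside $K = K_0(y)$, a degree-$D$ extension of $K_0 = \Qq(z_1,\ldots,z_q)$. For each index $i\in\{1,\ldots,q\}$, regard $K$ as a one-variable function field over $F_i := \Qq(z_1,\ldots,\widehat{z_i},\ldots,z_q)$ with $z_i$ as the independent variable. Then $\ve_1,\eta_1$ are units for a finite set $S_i$ of places of $K/F_i$ lying above $z_i=\infty$, above the zeros of $f$, and above the places of bad reduction of $\FF$ (where its leading coefficient or discriminant vanishes). Each of $|S_i|$, the genus of $K/F_i(z_i)$, and $[K:F_i(z_i)]$ is effectively bounded in $D, d_1, q$, so Mason's theorem produces an effective upper bound on the degree-in-$z_i$ of $\ve_1, \eta_1$. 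Summing over $i$ and reading off the polynomials $P_{\ve_1,j}, Q_{\ve_1}, P_{\eta_1,j}, Q_{\eta_1}$ of \eqref{3.4} yields $\Deg \ve_1, \Deg \eta_1 \leq 4qD^2 d_1$.

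Once the degrees of the polynomials $P_{\ve_1,j}, Q_{\ve_1}, P_{\eta_1,j}, Q_{\eta_1} \in A_0$ are controlled, their integer coefficients can be reconstructed from their values at sufficiently many small integer points via a Vandermonde-type linear-algebra argument. For each tuple $\omega = (\omega_1,\ldots,\omega_q) \in \Zz^q$ of polynomial size avoiding the bad locus (where $f$, the leading coefficient of $\FF$, the discriminant of $\FF$, or $Q_{\ve_1} Q_{\eta_1}$ vanish), the map $z_i \mapsto \omega_i$ together with a choice of root $\bar y$ of $\FF(\omega_1,\ldots,\omega_q, X)$ extends to a ring homomorphism $\varphi : B \to \overline{\Qq}$ into a number field $L = \Qq(\omega_1,\ldots,\omega_q, \bar y)$ of degree at most $D$ over $\Qq(\omega_1,\ldots,\omega_q)$. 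The resulting equation $\varphi(\ve_1) + \varphi(\eta_1) = 1$ is a two-term $S$-unit equation over $L$, and Gy\H{o}ry--Yu provides an effective, $\omega$-uniform bound on the logarithmic heights of $\varphi(\ve_1), \varphi(\eta_1)$. Interpolating these pointwise bounds back into coefficient bounds gives \eqref{3.14}.

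The main difficulty lies in Step~2: one has to (i) certify that enough good specialization points exist and can be chosen with controlled integer coordinates, (ii) make all the number-field invariants (degree $\leq D$, discriminant, size of $S$) of $L$ uniform in $\omega$ so that a single Gy\H{o}ry--Yu bound covers every specialization, and (iii) verify that the combinatorial cost of interpolating over polynomially many points is absorbed by the $\exp O\big(D(q+d_1)\log^*\{2D(q+d_1)\} + Dh_1\big)$ factor. A secondary subtlety is that $\FF(\omega_1,\ldots,\omega_q, X)$ may factor nontrivially over $\Qq(\omega_1,\ldots,\omega_q)$, so one must settle for $[L : \Qq(\omega_1,\ldots,\omega_q)] \leq D$ rather than $= D$. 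Heuristically, the $Dh_1$ term reflects the degree $\leq D$ and input height $h_1$ of $\FF$ entering the Gy\H{o}ry--Yu estimate, while the $D(q+d_1)\log^*(\cdot)$ term reflects both the combinatorics of interpolation and the Gy\H{o}ry--Yu dependence on the number of $S$-places.
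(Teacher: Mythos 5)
Your high-level strategy matches the paper's exactly: Mason's theorem over function fields for the degree bound, and specialization to number fields plus Győry--Yu for the height bound. But two technical pivots that carry most of the weight of the proof are glossed over in a way that hides genuine difficulties, and as written the sketch would not go through.

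For the degree bound, two issues. First, Mason's theorem (Proposition~\ref{le:4.3}) requires the constant field to be algebraically closed, so one cannot work directly in $K/F_i(z_i)$; the paper passes to $M_i := \overline{\kv_i}(z_i, y^{(1)},\ldots,y^{(D)})$, the splitting field of $\FF$ over $\overline{\kv_i}(z_i)$. Second and more importantly, the conversion from a Mason-type bound on $H_{M_i}(\ve_1)$ to a bound on $\Deg \ve_1$ is not a matter of ``reading off'' the polynomials $P_{\ve_1,j}, Q_{\ve_1}$; one needs all $D$ conjugates $\ve_1^{(1)},\ldots,\ve_1^{(D)}$ over $K_0$, and a Cramer's-rule comparison (the paper's Lemma~\ref{le:4.4}) showing that $\max_j \deg_{z_i} P_j$ and $\deg_{z_i} Q$ are controlled by $\sum_{j=1}^D H_{M_i}(\ve_1^{(j)})$ plus a term involving the heights of the $y^{(j)}$. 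Without this identity you cannot turn a function-field height bound on a single element into a bound on $\Deg$. (A minor point: you include in $S_i$ the places where the leading coefficient or discriminant of $\FF$ vanishes; $\FF$ is monic, and since $y$ and its conjugates are integral over $A_0$, the places of $\Delta_\FF$ are not needed for $\ve_1^{(j)},\eta_1^{(j)}\in O_S^*$ --- only $v(z_i)<0$ or $v(f)>0$ are required.)

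For the height bound, the ``Vandermonde-type linear-algebra argument'' to reconstruct coefficients from values at integer points only determines the vector $(Q,P_0,\ldots,P_{D-1})$ up to a scalar; the normalization $\gcd(Q,P_0,\ldots,P_{D-1})=1$ is an \emph{integrality} condition, and controlling the resulting integer content of $Q$ is the hard part. The paper factors $Q = a\widetilde{Q}$ with $a\in\Zz$ and must bound $|a|$ by a $p$-adic argument: it uses the $p$-adic case of Lemma~\ref{le:5.4} (a uniform two-sided estimate comparing $|g_1|_p$ with $\max_{\uv\in\SS}|g_1(\uv)|_p$) together with the coprimality of $Q$ with the $P_j$ to show that for each prime $p\mid a$ there is a good point $\uv_0$ with $|P_j(\uv_0)|_p$ not too small. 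Your sketch's item (iii), ``verify that the combinatorial cost of interpolating \ldots is absorbed,'' does not touch this denominator-control issue, and a naive Lagrange interpolation would leave it unresolved. Both gaps are fixable by following the paper's Lemmas~\ref{le:4.4}, \ref{le:5.2}, \ref{le:5.4}, and \ref{le:5.6}, but in their absence the proposal is a plausible outline rather than a proof.
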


\begin{proof}[Proof of Theorem \ref{th:1.1}]
Let $a,b,c\in A$ be the coefficients of \eqref{1.3},
and $\widetilde{a},\widetilde{b},\widetilde{c}$ the representatives for $a,b,c$
from the statement of Theorem \ref{th:1.1}.
By Lemma \ref{le:3.4a}, there exists non-zero $f\in A_0$ such that
that $A\subseteq B:=A_0[y,f^{-1}]$, $a,b,c\in B^*$, and moreover,
$\deg f\leq (2d)^{\expr}$ and $h(f)\leq (2d)^{\expr}(h+1)$.
By Corollary \ref{co:3.3} we have the same type of upper bounds for the degrees and logarithmic heights
of $\FF_1\kdots \FF_D$. So in Proposition \ref{pr:3.6} we may take
$d_1= (2d)^{\expr}$, $h_1=(2d)^{\expr}(h+1)$. 
Finally, by Lemma \ref{le:3.1} we have $D\leq d^t$.

Let $(\ve ,\eta )$ be a solution of \eqref{1.1} and put $\ve_1:=a\ve /c$, $\eta_1:= b\eta/c$.
By Proposition \ref{pr:3.6} we have
\[
\Deg \ve_1\leq 4qd^{2t} (2d)^{\expr}\leq (2d)^{\expr},\ \ 
\Height (\ve_1)\leq \exp \Big( (2d)^{\expr}(h+1)\Big).
\]
We apply Lemma \ref{le:3.5} with $\lambda =a/c$. Notice that $\lambda$ is represented by
$(\widetilde{a},\widetilde{c})$. By assumption, $\widetilde{a}$ and $\widetilde{c}$ 
have degrees at most $d$
and logarithmic heights at most $h$. Letting $\widetilde{a},\widetilde{c}$ play the role
of $a,b$ in Lemma \ref{le:3.5}, we see that in that lemma we may take
$h_0=\exp \Big( (2d)^{\expr}(h+1)\Big)$ and $d_0=(2d)^{\expr}$.
It follows that $\ve ,\ve^{-1}$ have representatives $\widetilde{\ve}$,
$\widetilde{\ve}'\in\Zz [X_1\kdots X_r]$ such that
\[
\deg \widetilde{\ve},\, \deg \widetilde{\ve}',\, h(\widetilde{\ve}),\, h(\widetilde{\ve}')
\leq  \exp \Big( (2d)^{\expr}(h+1)\Big).
\]
We observe here that the upper bound for $\Height (\ve_1)$ dominates by far the other terms
in our estimation.
In the same manner one can derive similar upper bounds for the degrees and logarithmic heights
of representatives for $\eta$ and $\eta^{-1}$.
This completes the proof of Theorem \ref{th:1.1}.
\end{proof}

Proposition \ref{pr:3.6} is proved in Sections \ref{4}--\ref{6}.
In Section \ref{4} we deduce the degree bound \eqref{3.13}.
Here, our main tool is Mason's effective
result on $S$-unit equations over function fields \cite[1983]{Mas83}.
In Section \ref{5} we work out a more precise version of an effective specialization
argument of Gy\H{o}ry \cite[1983]{Gy83}, \cite[1984]{Gy84}. In Section \ref{6}
we prove \eqref{3.14} by combining the specialization argument from Section \ref{5}
with a recent effective result for $S$-unit equations
over number fields, due to Gy\H{o}ry an Yu \cite[2006]{GyYu06}.

\section{Bounding the degree}\label{4}

We start with recalling some results on function fields in one variable.
Let $\kv$ be an algebraically closed field of characteristic $0$
and let $z$ be transcendental over $\kv$. 
Let $K$ be a finite extension of $\kv (z)$.
Denote by $g_{K/\kv}$ the genus of $K$,
and by $M_K$ the collection
of  valuations of $K/\kv$, i.e, the valuations of $K$ with value group $\Zz$ 
which are trivial on $\kv$. 
Recall that these valuations satisfy the sum formula 
\[
\sum_{v\in M_K} v(x)=0\ \ \mbox{for $x\in K^*$.}
\]
As usual, for a finite subset $S$ of $M_K$ the group of $S$-units of $K$ is given by
\[
O_S^*=\{ x\in K^*:\, v(x)=0\ \mbox{for } v\in M_K\setminus S\}.
\]
The (homogeneous) height of $\xv =(x_1\kdots x_n)\in K^n$ relative to $K/\kv$ is defined by
\[
H_K(\xv )=H_K(x_1\kdots x_n):=-\sum_{v\in M_K}\min (v(x_1)\kdots v(x_n)).
\]
By the sum formula, 
\begin{equation}\label{4.0}
H_K(\alpha \xv )=H_K(\xv )\ \ \mbox{for $\alpha\in K^*$.}
\end{equation}
The height of $x\in K$ relative to $K/\kv$ is defined by
\[ 
H_K(x):= H_K(1,x)=-\sum_{v\in M_K}\min (0,v(x)).
\]
If $L$ is a finite extension of $K$, we have
\begin{equation}\label{4.1}
H_L(x_1\kdots x_n)=[L:K]H_K (x_1\kdots x_n)\ \ \mbox{for } (x_1\kdots x_n) \in K^n.
\end{equation}
By $\deg f$ we denote the total degree of $f\in\kv [z]$.
Then for $f_1\kdots f_n\in\kv [z]$ with $\gcd (f_1\kdots f_n)=1$ we have
\begin{equation}\label{4.2}
H_{\kv (z)}(f_1\kdots f_n)=\max (\deg f_1\kdots \deg f_n).
\end{equation}

\begin{lemma}\label{le:4.1}
Let $y_1\kdots y_m\in K$ and suppose that
\[
X^m+f_1X^{m-1}+\cdots +f_m=(X-y_1)\cdots (X-y_m)
\]
for certain $f_1\kdots f_m\in\kv [z]$. Then
\[
[K:\kv (z)]\max (\deg f_1\kdots\deg f_m)=\sum_{i=1}^m H_K(y_i).
\]
\end{lemma}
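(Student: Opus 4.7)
The plan is to identify both sides of the claimed equality with the homogeneous $K$-height of the coefficient vector of the monic polynomial $P(X) := X^m + f_1 X^{m-1} + \cdots + f_m$, and then to compare, place by place on $K$, the $v$-content of the coefficients of $P$ with the $v$-contents of its linear factors $X - y_i$.

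First I would establish the local content identity. Fix $v \in M_K$ and write $c_v(P) := \min(0, v(f_1), \ldots, v(f_m))$ for the $v$-content of $P$. Since $v$ is non-archimedean, Gauss's lemma applied to the factorization $P = \prod_{i=1}^m (X - y_i)$ over the DVR at $v$ gives
\[
c_v(P) = \sum_{i=1}^m c_v(X - y_i) = \sum_{i=1}^m \min\bigl(0, v(y_i)\bigr).
\]
Summing over all $v \in M_K$ and negating, the right-hand side becomes $\sum_{i=1}^m H_K(y_i)$ by definition of $H_K$, and the left-hand side is $H_K(1, f_1, \ldots, f_m)$, so
\[
H_K(1, f_1, \ldots, f_m) = \sum_{i=1}^m H_K(y_i).
\]

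Next I would evaluate this height in terms of degrees. Since each $f_i$ lies in $\kv[z]$, at every finite place $w$ of $\kv(z)$ one has $w(f_i) \geq 0$ so the relevant minimum is $0$, while at the infinite place $w_\infty(f_i) = -\deg f_i$, making the minimum equal to $-\max_i \deg f_i$. Hence $H_{\kv(z)}(1, f_1, \ldots, f_m) = \max_i \deg f_i$, and the extension formula \eqref{4.1} promotes this to $H_K(1, f_1, \ldots, f_m) = [K:\kv(z)] \max_i \deg f_i$. Combining with the previous display yields the claim. (Formula \eqref{4.2} does not apply directly because one cannot assume $\gcd(f_1, \ldots, f_m) = 1$, but including the leading coefficient $1$ sidesteps this.)

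The only delicate point is the multiplicative content identity at a single place: one needs the \emph{equality} $c_v\bigl(\prod_i (X - y_i)\bigr) = \sum_i c_v(X - y_i)$, not merely the trivial inequality $\geq$. This is Gauss's lemma over the DVR at $v$, proved by induction from the two-factor case; the two-factor case in turn follows by reducing both sides modulo a uniformizer (after clearing contents) and using that $\kv(z)$-polynomials of nonzero content multiply to a polynomial of nonzero content over the residue field, which is an integral domain. Once this local equality is in hand, the rest of the argument is routine bookkeeping with the sum formula and \eqref{4.1}.
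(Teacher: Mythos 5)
Your proof follows essentially the same route as the paper's: Gauss's lemma applied place by place, then summation over $M_K$ and the extension formula \eqref{4.1} for heights. You are right that the paper's terse ``apply \eqref{4.2}'' glosses over the $\gcd$ hypothesis --- including the leading coefficient $1$ in the tuple, as you do, is the correct fix, and your direct evaluation of $H_{\kv(z)}(1,f_1,\ldots,f_m)$ makes this explicit.
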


\begin{proof}
By Gauss' Lemma we have for $v\in M_K$,
\[
\min (v(f_1)\kdots v(f_m))=\sum_{i=1}^m \min (0,v(y_i)).
\]
Now take the sum over $v\in M_K$ and apply \eqref{4.1}, \eqref{4.2}.
\end{proof}

\begin{lemma}\label{le:4.2}
Let $K$ be the splitting field over $\kv (z)$ of 
$F:=X^m+f_1X^{m-1}+\cdots +f_m$, where $f_1\kdots f_m\in \kv [z]$. Then
\[
g_{K/\kv}\leq (d-1)m\cdot \max_{1\leq i\leq m} \deg f_i ,
\]
where $d:=[K:\kv (z)]$.
\end{lemma}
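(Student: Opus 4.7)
The plan is to apply Riemann--Hurwitz to $K/\kv(z)$, bounding the different via the discriminant of $F$ and then controlling that discriminant's degree through Lemma~\ref{le:4.1}.

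I first reduce to the case where $F$ is separable. If $F$ has multiple roots, let $\alpha_1,\ldots,\alpha_{m'}$ be the distinct roots and set $G(X):=\prod_j(X-\alpha_j)$. Since the $\alpha_j$ are integral over $\kv[z]$ (roots of the monic $F\in\kv[z][X]$) and $G\in\kv(z)[X]$ by Galois invariance, we have $G\in\kv[z][X]$. Writing $G=X^{m'}+G_1X^{m'-1}+\cdots+G_{m'}$, Lemma~\ref{le:4.1} applied to $G$ and to $F$, together with $\{\alpha_j\}\subseteq\{y_1,\ldots,y_m\}$, yields
\[
d\cdot\max_j\deg G_j \;=\; \sum_j H_K(\alpha_j) \;\leq\; \sum_i H_K(y_i) \;=\; dH,
\]
with $H:=\max_i\deg f_i$; so $G$ has degree $m'\leq m$ with coefficients of $z$-degree at most $H$. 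Since $F$ and $G$ share the same splitting field, it suffices to prove the claim for $G$, and hence I may assume $F$ is separable with distinct roots $y_1,\ldots,y_m\in K$.

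Next I apply Riemann--Hurwitz to $K/\kv(z)$. Since $\kv$ has characteristic zero all ramification is tame, so
\[
2g_K-2 \;=\; -2d+\sum_\mathfrak{p}\sum_{\mathfrak{P}|\mathfrak{p}}\bigl(e(\mathfrak{P}|\mathfrak{p})-1\bigr)f(\mathfrak{P}|\mathfrak{p})\deg\mathfrak{p}.
\]
For each $\mathfrak{p}$, combining $\sum_{\mathfrak{P}|\mathfrak{p}}ef=d$ with $\sum_{\mathfrak{P}|\mathfrak{p}}f\geq 1$ gives $\sum_{\mathfrak{P}|\mathfrak{p}}(e-1)f\leq d-1$, so $\deg\mathfrak{D}_{K/\kv(z)}\leq (d-1)R$, where $R:=\sum_{\mathfrak{p}\text{ ramified}}\deg\mathfrak{p}$. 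A finite place $\mathfrak{p}$ is unramified in $K$ whenever $F\bmod\mathfrak{p}$ remains separable, so the ramified finite places all divide $\Delta(F):=\prod_{i<j}(y_i-y_j)^2\in\kv[z]$; including the possibly ramified infinite place gives $R\leq\deg\Delta(F)+1$.

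To bound $\deg\Delta(F)$, I use subadditivity of $H_K$ under sums and products together with Lemma~\ref{le:4.1}:
\[
H_K(\Delta) \;\leq\; 2\sum_{i<j}\bigl(H_K(y_i)+H_K(y_j)\bigr) \;=\; 2(m-1)\sum_i H_K(y_i) \;=\; 2(m-1)dH.
\]
Since $\Delta\in\kv[z]$, equations \eqref{4.1}--\eqref{4.2} give $H_K(\Delta)=d\deg\Delta$, whence $\deg\Delta\leq 2(m-1)H$. Thus $R\leq 2(m-1)H+1$, and Riemann--Hurwitz yields
\[
g_K \;\leq\; -(d-1)+(d-1)\bigl((m-1)H+1/2\bigr) \;\leq\; (d-1)mH.
\]
The main obstacle is the initial reduction to the separable case, where Lemma~\ref{le:4.1} is essential to ensure that passing to the squarefree part of $F$ does not inflate the $z$-degrees of the coefficients; the remaining estimates are a standard Riemann--Hurwitz computation combined with a triangle-inequality bound on the height of the discriminant.
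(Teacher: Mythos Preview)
Your proof is correct. The paper itself does not give an argument here but simply cites Lemma~H of Schmidt~\cite{Schmidt78}, so you have supplied a genuine self-contained proof where the paper defers to the literature.

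A brief comparison: your route is the natural direct one via Riemann--Hurwitz. The two nontrivial ingredients you need beyond the formula are (a)~that the finite ramification locus of the splitting field is contained in the zero set of $\Delta(F)$, and (b)~a degree bound for $\Delta(F)$. For (b), using Lemma~\ref{le:4.1} to bound $H_K(\Delta)$ through $\sum_i H_K(y_i)$ is a clean move and in fact gives $\deg\Delta\leq 2(m-1)H$, which is slightly sharper than what one would get from the naive weighted-degree estimate on the discriminant as a polynomial in the $f_i$. Your preliminary reduction to separable $F$ is necessary (otherwise $\Delta(F)=0$ and the argument collapses), and your use of Lemma~\ref{le:4.1} there to control the coefficient degrees of the squarefree part is exactly right. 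One cosmetic remark: since $\kv$ is algebraically closed, the factors $f(\mathfrak{P}|\mathfrak{p})$ and $\deg\mathfrak{p}$ in your Riemann--Hurwitz display are all equal to~$1$; the formula is correct as written, just slightly more general than needed.
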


\begin{proof}
This is Lemma H of Schmidt \cite[1978]{Schmidt78}.
\end{proof}

In what follows, the cardinality of a set $S$ is denoted by $|S|$.

\begin{proposition}\label{le:4.3}
Let $K$ be a finite extension of $\kv (z)$ and $S$ be a finite subset of $M_K$.
Then for every solution of
\begin{equation}\label{4.3}
x+y=1\ \ \mbox{in } x,y\in O_S^*\setminus \kv^*
\end{equation}
we have $\max (H_K(x),H_K(y))\leq |S|+2g_{K/\kv}-2$.
\end{proposition}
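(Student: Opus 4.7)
My plan is to follow the classical Mason--Stothers argument via the Riemann--Hurwitz formula. Since $\kv$ is algebraically closed of characteristic $0$, let $C$ be the smooth projective curve over $\kv$ with function field $K$; then the valuations in $M_K$ correspond to the closed points of $C$, and $g_{K/\kv}$ is the genus of $C$. For a non-constant $x\in K$, the element $x$ determines a separable morphism $\varphi_x:C\to\Pp^1_{\kv}$ whose degree equals $[K:\kv(x)]$, which in turn equals the degree of the polar divisor of $x$; hence $\deg\varphi_x=H_K(x)$. Moreover, from $x+y=1$ and the ultrametric inequality, at any $v$ with $v(x)<0$ one has $v(y)=v(x)<v(1)=0$, so the polar divisors of $x$ and $y$ coincide and $H_K(x)=H_K(y)$. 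It therefore suffices to bound $H_K(x)$.

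The geometric input is that $\varphi_x^{-1}(\{0,1,\infty\})\subseteq S$: zeros and poles of $x$ lie in $S$ because $x\in O_S^*$, and the fiber over $1$ consists of zeros of $y=1-x$, which also lie in $S$ because $y\in O_S^*$. For any closed point $c\in\Pp^1_{\kv}$ one has $\sum_{P\in\varphi_x^{-1}(c)}(e_P-1)=\deg\varphi_x-|\varphi_x^{-1}(c)|$. Summing over the three disjoint fibers over $0,1,\infty$, whose union has cardinality at most $|S|$, yields
\[
\sum_{P\in\varphi_x^{-1}(\{0,1,\infty\})}(e_P-1)\;\geq\;3H_K(x)-|S|.
\]

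Since $\varphi_x$ is separable and $\mathrm{char}\,\kv=0$, the Riemann--Hurwitz formula applies and gives
\[
2g_{K/\kv}-2\;=\;-2\deg\varphi_x+\sum_{P\in C}(e_P-1)\;\geq\;-2H_K(x)+\bigl(3H_K(x)-|S|\bigr)\;=\;H_K(x)-|S|.
\]
Rearranging, $H_K(x)\leq|S|+2g_{K/\kv}-2$, and the same bound holds for $H_K(y)$ by the symmetry $H_K(x)=H_K(y)$ established above. The main obstacle in a fully rigorous write-up is the bookkeeping needed to identify the valuation-theoretic height $H_K(x)$ used in the excerpt with the geometric degree $\deg\varphi_x$, and to translate the abstract set $S\subseteq M_K$ of valuations into a set of points of $C$ through which the fibers over $\{0,1,\infty\}$ factor; these translations are entirely standard but must be made explicit, since the paper introduces $H_K$ purely in terms of the sum formula rather than via divisors on a curve.
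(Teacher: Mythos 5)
Your proof is correct. The paper itself gives no argument for this proposition but simply cites Mason (1983), so there is no internal proof to compare against; what you have supplied is the standard Riemann--Hurwitz proof of Mason's inequality, which is a genuinely different route from Mason's own. Mason's original argument in the cited paper is valuation-theoretic, built around the logarithmic derivative obtained by differentiating $x+y=1$ and bounding the zeros and poles of $x'/x$ and $y'/y$; your version instead realizes $x$ as a covering $\varphi_x\colon C\to\Pp^1_{\kv}$ and accounts for ramification above $\{0,1,\infty\}$. Both deliver exactly the bound $|S|+2g_{K/\kv}-2$: the Riemann--Hurwitz route makes the role of the genus structurally transparent, while Mason's is more elementary in that it stays entirely within valuations and derivations and avoids invoking Riemann--Hurwitz. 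Your preliminary observation that $x$ and $y$ share the same polar divisor (hence $H_K(x)=H_K(y)$) is correct and tidies the conclusion, and the bookkeeping you flag at the end --- identifying $H_K(x)$ with $\deg\varphi_x$ via the polar divisor, and translating $S\subseteq M_K$ into a set of closed points of $C$ --- is indeed routine, since $M_K$ is by definition the set of discrete valuations of $K$ trivial on $\kv$, in canonical bijection with the closed points of the smooth projective model $C$.
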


\begin{proof}
See Mason \cite[1983]{Mas83}.
\end{proof}

We keep the notation from Proposition \ref{pr:3.6}. We may assume that $q>0$ because the case
$q=0$ is trivial. Let as before $K_0=\Qq (z_1\kdots z_q)$,
$K=K_0(y)$, $A_0=\Zz [z_1\kdots z_q]$, $B=\Zz [z_1\kdots z_q,f^{-1},y]$.

Fix $i\in \{ 1\kdots q\}$. Let
$\kv_i:=\Qq(z_1,\ldots , z_{i-1},z_{i+1},\ldots , z_q)$ and
$\overline{\kv_{i}}$ its algebraic closure.  Thus, the domain
$A_0$ is contained in $\overline{\kv_i}[z_i]$.
Let $y^{(1)}=y,\ldots , y^{(D)}$ denote the conjugates of $y$ over
$K_0$. Let $M_i$ denote the splitting
field of the polynomial
$X^{D}+\mathcal{F}_1X^{D-1}+\cdots +\mathcal{F}_D$ 
over $\overline{\kv_i}(z_i)$, i.e.
\[
M_i:=\overline{\kv_i}(z_i,y^{(1)},\ldots ,y^{(D)}).
\]
The subring
\[
B_i:=\overline{\kv_i}[z_i,f^{-1}, y^{(1)},\ldots ,y^{(D)}]
\]
of $M_i$ contains $B=\Zz [z_1,\ldots ,z_q,f^{-1},y]$
as a subring. Put $\Delta_i:=[M_i:\overline{\kv_i}(z_i)]$.

We apply Lemmas \ref{le:4.1}, \ref{le:4.2} and Proposition \ref{le:4.3} with $z_i,\kv_i,M_i$ instead
of $z,\kv,K$. 
Denote by $g_{M_i}$ the genus of $M_i/\overline{\kv_i}$.
The height $H_{M_i}$ is taken with respect to $M_i/\overline{\kv_i}$.
For $g\in A_0$, we denote by $\deg_{z_i} g$ the degree of $g$ in the variable $z_i$.

\begin{lemma}\label{le:4.4}
Let $\alpha\in K$ and denote by $\alpha^{(1)}\kdots \alpha^{(D)}$ the conjugates of $\alpha$ over $K_0$.
Then
\[
\Deg \alpha \leq qD\cdot d_1+\sum_{i=1}^q \Delta_i^{-1}\sum_{j=1}^D H_{M_i}(\alpha^{(j)} ).
\]
\end{lemma}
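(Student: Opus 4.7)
The plan is to decouple the $q$ transcendence-basis variables via the elementary inequality
\[
\Deg\alpha\;\leq\;\sum_{i=1}^q D_i(\alpha),\qquad D_i(\alpha):=\max_k\bigl(\deg_{z_i}P_{\alpha,k},\ \deg_{z_i}Q_\alpha\bigr),
\]
since the total degree of a polynomial in $\Zz[z_1,\ldots,z_q]$ is bounded above by the sum of its degrees in each individual variable. It therefore suffices to prove, for each $i\in\{1,\ldots,q\}$, the per-variable estimate $D_i(\alpha)\leq Dd_1+\Delta_i^{-1}\sum_{j=1}^D H_{M_i}(\alpha^{(j)})$ and sum over $i$.

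To convert $D_i(\alpha)$ into a function-field height on $M_i$, I would first show that $\gcd_{\overline{\kv_i}[z_i]}(P_{\alpha,0},\ldots,P_{\alpha,D-1},Q_\alpha)=1$: any common factor in $\overline{\kv_i}[z_i]$ contains a linear factor $z_i-\beta$ (with $\beta\in\overline{\kv_i}$) whose $\kv_i$-minimal polynomial divides each $P_{\alpha,k}$ and $Q_\alpha$ in $\kv_i[z_i]=\Qq(z_1,\ldots,z_{i-1},z_{i+1},\ldots,z_q)[z_i]$, and a standard Gauss-lemma denominator-clearing argument then produces a non-unit common factor in $\Zz[z_1,\ldots,z_q]$, contradicting \eqref{3.4}. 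Combined with \eqref{4.2} and \eqref{4.1}, this identifies
\[
D_i(\alpha)=H_{\overline{\kv_i}(z_i)}(P_{\alpha,0},\ldots,Q_\alpha)=\Delta_i^{-1}H_{M_i}(P_{\alpha,0},\ldots,P_{\alpha,D-1},Q_\alpha).
\]

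The core step passes from the $P_{\alpha,k},Q_\alpha$ to the conjugates $\alpha^{(j)}$. Inverting the $D$ relations $Q_\alpha\alpha^{(j)}=\sum_k P_{\alpha,k}(y^{(j)})^k$ by Cramer's rule on the Vandermonde $V=((y^{(j)})^k)_{j,k}$ gives the projective identity
\[
(P_{\alpha,0}:\cdots:Q_\alpha)=(\det V_0:\cdots:\det V_{D-1}:\det V)\quad\text{in }\Pp^D(M_i),
\]
where each $\det V_k$ ($k<D$) is the linear form $\sum_j(\pm1)M_{jk}\alpha^{(j)}$ in the conjugates of $\alpha$. This realizes $(P_{\alpha,0},\ldots,Q_\alpha)$ as the image of $(\alpha^{(1)},\ldots,\alpha^{(D)},1)$ under an explicit $(D+1)\times(D+1)$-matrix $C$ with entries $\pm M_{jk},\det V,0,1$. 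Applying the function-field matrix-height inequality $H_{M_i}(C\vec x)\leq H_{M_i}(\vec x)+H_{M_i}(\{\text{entries of }C\})$, which follows unconditionally from the local bound $\min_iv(\sum_j c_{ij}x_j)\geq\min_{i,j}v(c_{ij})+\min_j v(x_j)$ summed over valuations $v$, together with $H_{M_i}(1,\alpha^{(1)},\ldots,\alpha^{(D)})\leq\sum_jH_{M_i}(\alpha^{(j)})$ (a consequence of $\min(0,v(y_1),\ldots,v(y_n))\geq\sum_i\min(0,v(y_i))$ for non-positive summands) then yields
\[
H_{M_i}(P_{\alpha,0},\ldots,Q_\alpha)\leq\sum_{j=1}^D H_{M_i}(\alpha^{(j)})+H_{M_i}(\{M_{jk},\det V\}).
\]

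Finally, I would bound $H_{M_i}(\{M_{jk},\det V\})$ using the structural identity $M_{jk}=\pm V_{\hat j}\,e_{D-1-k}(y^{(l)}:l\neq j)$ combined with the function-field Gauss-lemma identity $H_{M_i}(\{e_0(\vec w),\ldots,e_n(\vec w)\})=\sum_l H_{M_i}(w_l)$ for the projective height of the tuple of elementary symmetric functions of $\vec w=(w_1,\ldots,w_n)$ in $M_i$. Coupled with Lemma \ref{le:4.1} applied to $\FF$, which yields $\sum_l H_{M_i}(y^{(l)})=\Delta_i\max_k\deg_{z_i}\FF_k\leq\Delta_i d_1$, careful accounting across the $D$ choices of $j$ produces $H_{M_i}(\{M_{jk},\det V\})\leq\Delta_i Dd_1$; dividing by $\Delta_i$ and summing over $i$ finishes the proof. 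The main obstacle I anticipate is this final accounting: naive tuple subadditivity $H(\vec A+\vec B)\leq H(\vec A)+H(\vec B)$ fails for function-field projective heights (projective rescaling can wash away scaling factors), so the structural Gauss-lemma identity above must be exploited rather than expanding each Vandermonde minor into its $(D-1)!$ monomials in the $y^{(l)}$—the latter would introduce a factorial factor on $d_1$ in place of the claimed linear factor $D$.
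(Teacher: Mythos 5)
Your proof is correct and follows the same broad outline as the paper's: decouple $\Deg\alpha$ into the per-variable quantities $\mu_i$, identify $\Delta_i\mu_i$ with $H_{M_i}(Q,P_0,\ldots,P_{D-1})=H_{M_i}(\delta,\delta_0,\ldots,\delta_{D-1})$ via Gauss's lemma, $\eqref{4.2}$, $\eqref{4.1}$, $\eqref{4.0}$ and Cramer on the Vandermonde, then estimate that height and invoke Lemma~\ref{le:4.1}. Where you genuinely deviate is in the last estimate: the paper does it in a single local step, namely for each $v$ it bounds
$-\min\bigl(v(\delta),v(\delta_0),\ldots,v(\delta_{D-1})\bigr)\leq -D\sum_j\min(0,v(y^{(j)}))-\sum_j\min(0,v(\alpha^{(j)}))$
by inspecting the Leibniz expansion of each determinant term by term, then sums over $v$; you instead factor this through a matrix-height subadditivity (separating off the $\alpha^{(j)}$'s) and then bound the cofactor entries. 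Both give the same bound, and your modular version is clean. However, the obstacle you flag at the end is not actually there: in the function-field setting every valuation is non-archimedean, so $v(\sum)\geq\min v$ with no correction for the number of terms, and expanding each $(D-1)\times(D-1)$ cofactor into its $(D-1)!$ monomials immediately yields $v(M_{jk})\geq(D-1)\sum_{\ell}\min(0,v(y^{(\ell)}))$ (each $y^{(\ell)}$ appears with exponent at most $D-1$ in each monomial), hence the linear factor $D$ with no factorial penalty. The elementary-symmetric-function identity is therefore a pleasant alternative route but not forced; the ``naive'' expansion that you discard is exactly what the paper's one-line local bound uses.
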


\begin{proof}
We have 
\[
\alpha =Q^{-1}\sum_{j=0}^{D-1} P_jy^j
\]
for certain $P_0\kdots P_{D-1},Q\in A_0$ with $\gcd (Q,P_0\kdots P_{D-1})=1$.
Clearly,
\begin{equation}\label{4.3a}
\Deg\alpha\leq \sum_{i=1}^q \mu_i,\ \ \mbox{where } 
\mu_i:=\max (\deg_{z_i} Q,\deg_{z_i} P_0\kdots \deg_{z_i} P_{D-1}).
\end{equation}
Below, we estimate $\mu_1\kdots \mu_q$ from above.
We fix $i\in\{ 1\kdots q\}$ and use the notation introduced above.

Obviously,
\[
\alpha^{(k)} =Q^{-1}\sum_{j=0}^{D-1} P_j\cdot (y^{(k)})^j \ \mbox{for } k=1\kdots D .
\]
Let $\Omega$ be the $D\times D$-matrix with rows
\[
(1\kdots 1),\, (y^{(1)}\kdots y^{(D)})\kdots \big( (y^{(1)})^{D-1}\kdots (y^{(D)})^{D-1}\big).
\]
By Cramer's rule, $P_j/Q=\delta_j/\delta$, where $\delta =\det\Omega$,
and $\delta_j$ is the determinant of the matrix obtained by replacing the $j$-th row of $\Omega$
by $(\alpha^{(1)}\kdots\alpha^{(D)})$.

Gauss' Lemma implies that $\gcd(P_0\kdots P_{D-1},Q)=1$ in the ring
in $\kv_i[z_i]$. By \eqref{4.2} (with $z_i$ in place of $z$) we have
\begin{eqnarray*}
\mu_i &=&
\max (\deg_{z_i}Q,\deg_{z_i} P_0\kdots \deg_{z_i}P_{D-1})
\\[0.15cm]
&=&
H_{\overline{\kv}(z_i)}(Q,P_0\kdots P_{D-1}).
\end{eqnarray*}
Using $[M_i:\overline{\kv_i}(z_i)]=\Delta_i$, the identities \eqref{4.1}, \eqref{4.0} 
(with $z_i$ instead of $z$)
and the fact that 
$(\delta ,\delta_1\kdots \delta_D)$ is a scalar multiple of 
$(Q,P_0\kdots P_{D-1})$ we obtain
\begin{equation}
\label{4.4}
\Delta_i\mu_i =
H_{M_i}(Q,P_0\kdots P_{D-1})
=H_{M_i}(\delta ,\delta_1\kdots \delta_D).
\end{equation}
We bound from above the right-hand side.
A straightforward estimate yields 
that for every valuation $v$ of $M_i/\overline{\kv_i}$,
\begin{eqnarray*}
&&-\min (v(\delta ),v(\delta_1)\kdots v(\delta_D))
\\[0.15cm]
&&\qquad\quad\leq 
-D\sum_{j=1}^D \min (0,v(y^{(j)}))-\sum_{j=1}^D\min (0,v(\alpha^{(j)})).
\end{eqnarray*}
Then summation over $v$ and an application of Lemma \ref{le:4.1} lead to
\begin{eqnarray*}
H_{M_i}(\delta ,\delta_1\kdots \delta_D )&\leq& D\sum_{j=1}^D H_{M_i}(y^{(j)})+\sum_{j=1}^D H_{M_i}(\alpha^{(j)}),
\\[0.1cm]
&\leq& 
D\Delta_i\max (\deg_{z_i} \FF_1\kdots\deg\FF_D)+\sum_{j=1}^D H_{M_i}(\alpha^{(j)})
\\[0.1cm]
&\leq& \Delta_i\cdot Dd_1 + \sum_{j=1}^D H_{M_i}(\alpha^{(j)}),
\end{eqnarray*}
and then a combination with \eqref{4.4} gives
\[
\mu_i\leq Dd_1+\Delta_i^{-1}\sum_{j=1}^D H_{M_i}(\alpha^{(j)}).
\]
Now these bounds for $i=1\kdots q$ together with \eqref{4.3a} imply our Lemma.
\end{proof}

\begin{proof}[Proof of \eqref{3.13}]
We fix again $i\in\{ 1\kdots q\}$ and use the notation introduced above.
By Lemma \ref{le:4.2}, applied with $\kv_i,z_i,M_i$ instead of $\kv, z,K$ and with
$F=\mathcal{F}=X^{D}+\mathcal{F}_1X^{D-1}+\cdots +\mathcal{F}_D$, we have
\begin{equation}\label{4.6}
g_{M_i}\leq(\Delta_i-1)D\max_j\deg_{z_i}(\mathcal{F}_j)\leq (\Delta_i-1)\cdot Dd_1.
\end{equation} 
Let $S$ denote the subset of valuations $v$ of
$M_i/\overline{\kv_i}$ such that $v(z_i)<0$ or $v(f)>0$.
Each valuation of $\overline{\kv_i}(z_i)$ can be extended to at most
$[M_i:\overline{\kv_i}(z_i)]=\Delta_i$ valuations of $M_i$.
Hence $M_i$ has at most $\Delta_i$ valuations $v$ with $v(z_i)<0$ and at most
$\Delta_i\deg f$ valuations with $v(f)>0$.
Thus,
\begin{equation}\label{4.7}
|S|\leq \Delta_i+\Delta_i\deg_{z_i} f\leq\Delta_i(1+\deg f)\leq \Delta_i(1+d_1).
\end{equation}

Every $\alpha\in M_i$ which is integral over 
$\overline{\kv}_i[z_i,f^{-1}]$ belongs to $O_S$.
The elements  
$y^{(1)}\kdots y^{(D)}$ belong to $M_i$ and are integral over $A_0=\Zz [z_1\kdots z_q]$
so they certainly belong to $O_S$. As a consequence, 
the elements of $B$
and their conjugates over $\Qq (z_1\kdots z_q)$ belong to $O_S$.
In particular, if $\ve_1,\eta_1\in B^*$ and $\ve_1+\eta_1=1$, then
\begin{equation}\label{4.5}
\ve_1^{(j)}+\eta_1^{(j)}=1,\  \ve_1^{(j)},\eta_1^{(j)}\in O_S^*\ \ \mbox{for } j=1\kdots D .
\end{equation}

We apply Proposition \ref{le:4.3} and insert the upper bounds \eqref{4.6}, \eqref{4.7}.
It follows that for $j=1\kdots D$ we have either $\ve_1^{(j)}\in\overline{\kv_i}$ or 
\[
H_{M_i}(\ve_1^{(j)})\leq |S|+2g_{M_i}-2\leq 3\Delta_i\cdot Dd_1;
\]
in fact the last upper bound is valid also if $\ve_1^{(j)}\in\overline{\kv_i}$.
Together with Lemma \ref{le:4.4} this gives
\[
\Deg \ve_1\leq qDd_1 + qD\cdot 3Dd_1\leq 4qD^2d_1.
\]
For $\Deg \eta_1$ we derive the same estimate. This proves \eqref{3.13}.
\end{proof}

\section{Specializations}\label{5}

In this section we prove some results about specialization homomorphisms from 
the domain $B$ from Proposition \ref{pr:3.6} to $\OQq$.
We start with some notation and some preparatory lemmas.

The set of places of $\Qq$ is $M_{\Qq}=\{ \infty\}\cup\{ {\rm primes}\}$. By $|\cdot |_{\infty}$
we denote the ordinary absolute value on $\Qq$ and by $|\cdot |_p$ ($p$ prime) the $p$-adic absolute
value, with $|p|_p=p^{-1}$.
More generally, let $L$ be an algebraic number field and denote by $M_L$ its set of places.
Given $v\in M_L$, we define the absolute value $|\cdot |_v$ in such a way that its restriction to $\Qq$
is $|\cdot |_p$ if $v$ lies above $p\in M_{\Qq}$. These absolute values satisfy
the product formula $\prod_{v\in M_L} |x|_v^{d_v}=1$ for $x\in L^*$, where
$d_v:=[L_v:\Qq_p]/[L:\Qq ]$.

The (absolute logarithmic) height of $\xv =(x_1\kdots x_m)\in L^m\setminus\{ \nullv\}$ is defined by
\[
h(\xv )=h(x_1\kdots x_m)=\log\prod_{v\in M_L}\big(\max (|x_1|_v\kdots |x_m|_v)\big)^{d_v}.
\]
By the product formula, $h(\alpha\xv )=h(\xv )$ for $\alpha\in L^*$. Moreover,
$h(\xv )$ depends only on $\xv$ and not on the choice of the field $L$ such that $\xv\in L^m$.
So it defines a height on $\OQq^m\setminus\{\nullv\}$. 
The (absolute logarithmic) height of $\alpha\in\OQq$ is defined
by $h(\alpha ):=h((1,\alpha ))$. In case that $\alpha\in L$ we have
\[
h(\alpha )=\log\prod_{v\in M_L} \max (1,|\alpha |_v^{d_v}).
\]
For $\av =(a_1\kdots a_m)\in\Zz^m$ with $\gcd (a_1\kdots a_m)=1$ we have
\begin{equation}\label{5.6a}
h(\av )=\log\max (|a_1|\kdots |a_m|).
\end{equation}
It is easy to verify that for $a_1\kdots a_m,b_1\kdots b_m\in\OQq$,
\begin{equation}\label{5.7a}
h(a_1b_1+\cdots +a_mb_m)\leq h(1,a_1\kdots a_m)+ h(1,b_1\kdots b_m)+\log m.
\end{equation}

Let $G$ be a polynomial with coefficients in $L$. If $a_1\kdots a_r$ are the
non-zero coefficients of $G$, we put $|G|_v:=\max (|a_1|_v\kdots |a_r|_v)$
for $v\in M_L$. For a polynomial $G$ with coefficients in $\Zz$ we define
$h(G):=\log |G|_{\infty}$.

We start with four auxiliary results that are used in the construction of our specializations.

\begin{lemma}\label{le:5.1}
Let $m\geq 1$, $\alpha_1\kdots \alpha_m\in\OQq$ and suppose that 
$G(X):=\prod_{i=1}^m (X-\alpha_i)\in\Zz [X]$.
Then
\[
|h(G)-\sum_{i=1}^m h(\alpha _i)|\leq m.
\]
\end{lemma}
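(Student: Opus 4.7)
The plan is to express both $h(G)$ and $\sum_i h(\alpha_i)$ in terms of the Mahler measure
$M(G):=\prod_{i=1}^m\max(1,|\alpha_i|)$ (taken in any one fixed complex embedding), and then invoke the standard comparison inequalities between $M(G)$ and $|G|_{\infty}$.

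First, since $G\in\Zz[X]$ is monic, every $\alpha_i$ is an algebraic integer. Pick a number field $L$ containing $\alpha_1\kdots\alpha_m$. At every non-archimedean place $v\in M_L$ one has $|\alpha_i|_v\le 1$, so such $v$ contribute nothing to $h(\alpha_i)$. The archimedean part gives
\[
h(\alpha_i)=\frac{1}{[L:\Qq]}\sum_{\sigma:L\hookrightarrow\Cc}\log\max(1,|\sigma(\alpha_i)|),
\]
where $\sigma$ runs over all $[L:\Qq]$ complex embeddings of $L$. Summing over $i$ and using that for every $\sigma$ the multiset $\{\sigma(\alpha_1)\kdots\sigma(\alpha_m)\}$ coincides with the multiset of complex roots of $G$ (because $G$ has rational coefficients), the inner sum is independent of $\sigma$ and equals $\log M(G)$. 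Hence
\[
\sum_{i=1}^m h(\alpha_i)=\log M(G).
\]

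Second, I would invoke the classical Mahler--Landau inequalities
\[
2^{-m}|G|_{\infty}\le M(G)\le \sqrt{m+1}\,|G|_{\infty},
\]
where the left bound follows from $|a_k|\le\binom{m}{k}M(G)\le 2^m M(G)$ for each coefficient $a_k$ of $G$, and the right bound from $M(G)\le\bigl(\sum_k |a_k|^2\bigr)^{1/2}\le\sqrt{m+1}\,|G|_{\infty}$. Taking logarithms,
\[
-m\log 2\le \log M(G)-\log|G|_{\infty}\le \tfrac12\log(m+1),
\]
and both bounds are at most $m$ in absolute value (the case $m=0$, where $G=1$, being trivial). Since $h(G)=\log|G|_{\infty}$ by definition, combining the two displays yields $|h(G)-\sum_{i=1}^m h(\alpha_i)|\le m$.

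The argument is entirely standard; the only point requiring a little care is the Galois-invariance argument in step one that collapses the sum over embeddings $\sigma$ into a single copy of $\log M(G)$. There is no substantive obstacle.
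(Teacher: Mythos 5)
Your proof is correct, and it follows the standard route: the paper itself gives no argument but simply cites Bombieri--Gubler, Theorem~1.6.13, which proves the statement by essentially this same Mahler-measure comparison (the identity $\sum_i h(\alpha_i)=\log M(G)$ for a monic $G\in\Zz[X]$, together with the Landau bound $M(G)\le\Vert G\Vert_2$ and the elementary-symmetric-function bound $|a_k|\le\binom{m}{k}M(G)$). Your Galois-invariance step collapsing the sum over embeddings is stated correctly, and the final numerical check that both $m\log 2$ and $\tfrac12\log(m+1)$ are at most $m$ for $m\ge1$ is sound. One minor remark: the aside about the case $m=0$ is unnecessary since the lemma assumes $m\ge1$, and your parenthetical $G=1$ there would anyway conflict with $G$ being a monic degree-$m$ polynomial; but this does not affect the argument.
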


\begin{proof}
See Bombieri and Gubler \cite[Theorem 1.6.13, pp. 28]{BoGu06}.
\end{proof}

\begin{lemma}\label{le:5.2}
Let $m\geq 1$, let $\alpha_1\kdots\alpha_m\in\OQq$ be distinct and suppose that
$G(X):=\prod_{i=1}^m (X-\alpha_i)\in\Zz [X]$. Let $q,p_0\kdots p_{m-1}$ be integers 
with 
\[
\gcd (q,p_0\kdots p_{m-1})=1,
\]
and put
\[
\beta_i:=\sum_{j=0}^{m-1} (p_j/q)\alpha_i^j\ \ (i=1\kdots m).
\]
Then
\[
\log \max (|q|,|p_0|\kdots |p_{m-1}|)\leq 2m^2 + (m-1)h(G)+\sum_{j=1}^m h(\beta_j).
\]
\end{lemma}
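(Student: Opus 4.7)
The plan is to recognise $(q, p_0, \ldots, p_{m-1})$ as arising from Cramer's rule applied to the linear system $q\beta_i = \sum_{j=0}^{m-1} p_j \alpha_i^j$ ($i=1,\ldots,m$), use projective invariance of the absolute height to pass to Vandermonde-type determinants, and bound these determinants locally at every place of a number field containing all data. Set $V := (\alpha_i^j)_{1 \le i \le m,\, 0 \le j \le m-1}$; its determinant is the Vandermonde $\Delta = \prod_{i<k}(\alpha_k - \alpha_i) \neq 0$, since the $\alpha_i$ are distinct. For $j=0,\ldots,m-1$ let $\Delta_j := \det V_j$, where $V_j$ is obtained from $V$ by replacing its $(j+1)$-st column by $(\beta_1, \ldots, \beta_m)^T$. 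Cramer's rule gives $p_j \Delta = q \Delta_j$, so the tuples $(q, p_0, \ldots, p_{m-1})$ and $(\Delta, \Delta_0, \ldots, \Delta_{m-1})$ are proportional. Combining the coprimality assumption with \eqref{5.6a} and projective invariance of $h$,
\[
\log\max(|q|, |p_0|, \ldots, |p_{m-1}|) = h(\Delta, \Delta_0, \ldots, \Delta_{m-1}),
\]
the right-hand height being taken in any number field $L$ containing the $\alpha_i$ and $\beta_i$.

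Next I would bound $\max(|\Delta|_v, \max_j |\Delta_j|_v)$ place by place. Since $G \in \mathbb{Z}[X]$ is monic, the $\alpha_i$ are algebraic integers, hence $|\alpha_i|_v \le 1$ for every non-archimedean $v \in M_L$. Direct estimation then gives $\max(|\Delta|_v, \max_j |\Delta_j|_v) \le \max(1, |\beta_1|_v, \ldots, |\beta_m|_v)$ at such $v$. At archimedean $v$, I use $|\alpha_k - \alpha_i|_v \le 2 \max(1, |\alpha_i|_v)\max(1, |\alpha_k|_v)$ and a Laplace expansion of $\Delta_j$ along its $\beta$-column, obtaining
\[
\max(|\Delta|_v, \max_j |\Delta_j|_v) \le m!\, 2^{\binom{m}{2}} \cdot \max(1, |\beta_1|_v, \ldots, |\beta_m|_v) \cdot \prod_{k=1}^m \max(1, |\alpha_k|_v)^{m-1}.
\]
Taking logarithms, weighting by $d_v$, and summing over $v \in M_L$, the archimedean combinatorial constant contributes at most $\tfrac{m(m-1)}{2}\log 2 + \log m! \le 2m^2 - m$, so
\[
h(\Delta, \Delta_0, \ldots, \Delta_{m-1}) \le (2m^2 - m) + \sum_{j=1}^m h(\beta_j) + (m-1)\sum_{k=1}^m h(\alpha_k).
\]
Finally, Lemma \ref{le:5.1} gives $\sum_k h(\alpha_k) \le h(G) + m$; absorbing the resulting $m(m-1)$ term into the additive constant yields the claimed bound $2m^2 + (m-1)h(G) + \sum_j h(\beta_j)$.

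The main technical point is the archimedean estimate for $\Delta_j$: the Laplace-expansion, triangle-inequality, and Hadamard-type constants all have to be controlled uniformly so that the final additive constant is $\le 2m^2$, rather than something like $O(m^2 \log m)$, and so that the exponent of $\max(1,|\alpha_k|_v)$ comes out as exactly $m-1$ (and not $m$) in order to match the coefficient $m-1$ of $h(G)$ in the statement. This is however a routine bookkeeping exercise once the Cramer-plus-projective-height framework is in place.
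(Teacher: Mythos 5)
Your approach coincides with the paper's in all essential respects: express $(q,p_0,\ldots,p_{m-1})$ via Cramer's rule through the Vandermonde matrix, use \eqref{5.6a} and projective invariance to pass to $h(\delta,\delta_0,\ldots,\delta_{m-1})$ in the number field $L=\Qq(\alpha_1\kdots\alpha_m)$, bound each $|\delta_j|_v$ locally using that the $\alpha_i$ are algebraic integers, and finish with Lemma \ref{le:5.1}. The one genuine difference is the archimedean local estimate: the paper applies Hadamard's inequality directly to the $m\times m$ matrix, giving $c_v=m^{m/2}$ and hence an archimedean contribution of only $\tfrac{m}{2}\log m$, whereas you use a Laplace expansion plus the triangle inequality, giving the much cruder constant $m!\,2^{\binom{m}{2}}$. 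That crude constant still fits inside the $2m^2$ budget, so your route works in principle, but the Hadamard bound makes the final absorption painless and is the more natural tool for determinants.

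However, your closing arithmetic does not close as written. You bound the archimedean constant by $2m^2-m$, obtain the intermediate estimate with constant $2m^2-m$, and then say that replacing $(m-1)\sum_k h(\alpha_k)$ by $(m-1)h(G)+m(m-1)$ and ``absorbing the resulting $m(m-1)$ term'' yields $2m^2$. But $(2m^2-m)+m(m-1)=3m^2-2m$, which exceeds $2m^2$ for every $m\geq 3$. The statement is still true, because what you actually need is
\[
\tfrac{m(m-1)}{2}\log 2+\log m!+m(m-1)\leq 2m^2,
\]
equivalently $\tfrac{m(m-1)}{2}\log 2+\log m!\leq m^2+m$, and this does hold for all $m\geq 2$; but that is a sharper claim than the intermediate $2m^2-m$ you recorded, so the deduction as you wrote it is a non sequitur. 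Either verify the combined inequality directly, or — cleaner — replace the Laplace/triangle estimate by Hadamard's inequality as in the paper, after which the archimedean term is only $\tfrac{m}{2}\log m$ and the absorption is immediate.
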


\begin{proof}
For $m=1$ the assertion is obvious, so we assume $m\geq 2$.
Let $\Omega$ be the $m\times m$ matrix with rows $(\alpha_1^i\kdots \alpha_m^i)$ ($i=0\kdots m-1)$.
By Cramer's rule we have $p_i/q =\delta_i/\delta$ ($i=0\kdots m-1$), 
where $\delta =\det\Omega$ and $\delta_i$
is the determinant of the matrix, obtained by replacing the $i$-th row of $\Omega$ by $(\beta_1\kdots\beta_m)$.
Put $\mu :=\log \max (|q|,|p_0|\kdots |p_{m-1}|)$. Then by \eqref{5.6a}, 
\[
\mu = h(q,p_0\kdots p_{m-1})=h(\delta ,\delta_0\kdots\delta_{m-1}).
\]
Let $L=\Qq (\alpha_1\kdots\alpha_m)$. By Hadamard's inequality for the infinite places and the ultrametric
inequality for the finite places, we get
\[
\max (|\delta |_v,|\delta_1|_v\kdots|\delta_m|_v)\leq 
c_v\prod_{i=1}^m\max (1,|\alpha_i|_v)^{m-1}\max (1,|\beta_i|_v)
\]
for $v\in M_L$, where $c_v=m^{m/2}$ if $v$ is infinite and $c_v=1$ if $v$ is finite. By taking the
product over $v\in M_L$ and then logarithms, it follows that
\[
\mu\leq \half m\log m + \sum_{i=1}^m \big( (m-1)h(\alpha_i)+h(\beta_i)\big).
\]
A combination with Lemma \ref{le:5.1} implies our lemma.
\end{proof}

\begin{lemma}\label{le:5.3}
Let $g\in\Zz [z_1\kdots z_q]$ be a non-zero polynomial of degree $d$ and $\NN$
a subset of $\Zz$ of cardinality $>d$. Then
\[
|\{ \uv\in\NN^q :\, g(\uv )=0\}|\leq d|\NN |^{q-1}.
\]
\end{lemma}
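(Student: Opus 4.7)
The plan is to prove this by induction on the number of variables $q\geq 1$, which is essentially the Schwartz--Zippel argument in the integer setting.

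The base case $q=1$ is immediate: a non-zero polynomial of degree $d$ in $\Zz[z_1]$ has at most $d$ roots in $\Zz$, so the cardinality in question is at most $d=d|\NN|^{0}$.

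For the inductive step with $q\geq 2$, I would view $g$ as a polynomial in its last variable,
\[
g(z_1,\ldots,z_q)=\sum_{i=0}^{k} g_i(z_1,\ldots,z_{q-1})\,z_q^{i},
\]
with $g_k\not\equiv 0$ and $k\leq d$. Since $g$ has total degree $d$, the leading coefficient $g_k$ has total degree at most $d-k$ in $q-1$ variables. I would partition the zero set of $g$ in $\NN^q$ according to whether $g_k$ vanishes on the projection to $\NN^{q-1}$. If $\uv=(u_1,\ldots,u_q)$ satisfies $g_k(u_1,\ldots,u_{q-1})\neq 0$, then the specialization $g(u_1,\ldots,u_{q-1},z_q)\in\Zz[z_q]$ is a non-zero univariate polynomial of degree exactly $k$, admitting at most $k$ roots in $\NN$; letting $(u_1,\ldots,u_{q-1})$ range freely over $\NN^{q-1}$, this case contributes at most $k|\NN|^{q-1}$ tuples. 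Otherwise $g_k(u_1,\ldots,u_{q-1})=0$, and the inductive hypothesis applied to $g_k$ (legitimate because $|\NN|>d\geq d-k$) bounds the number of such projections by $(d-k)|\NN|^{q-2}$; since $u_q$ is then unrestricted in $\NN$, this contributes at most $(d-k)|\NN|^{q-1}$ further tuples.

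Summing the two cases yields $k|\NN|^{q-1}+(d-k)|\NN|^{q-1}=d|\NN|^{q-1}$, closing the induction. The only edge case worth flagging is when $g_k$ is a non-zero constant (so $d-k=0$): then the second case is vacuous and the first case alone gives $k|\NN|^{q-1}\leq d|\NN|^{q-1}$. I do not anticipate any substantive obstacle.
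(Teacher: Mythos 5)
Your proof is correct and follows essentially the same route as the paper's: induction on $q$, writing $g$ as a polynomial in the last variable and splitting the zero count according to whether the leading coefficient vanishes at the projection. The only cosmetic difference is that you use $k$ where the paper uses $d_0$ for the $z_q$-degree, and you explicitly flag the edge case $d-k=0$, which the paper leaves implicit.
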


\begin{proof} We proceed by induction on $q$. For $q=1$ the assertion is clear.
Let $q\geq 2$. Write $g=\sum_{i=0}^{d_0} g_i(z_1\kdots z_{q-1})z_q^i$ with
$g_i\in \Zz [z_1\kdots z_{q-1}]$ and $g_{d_0}\not= 0$. 
Then $\deg g_{d_0}\leq d-d_0$.
By the induction hypothesis, there are at most $(d-d_0)|\NN |^{q-2}\cdot |\NN |$ tuples
$(u_1\kdots u_q)\in\NN^q$ with $g_{d_0}(u_1\kdots u_{q-1})=0$. Further, 
there are at most $|\NN |^{q-1}\cdot d_0$ tuples $\uv\in\NN^q$ with
$g_{d_0}(u_1\kdots u_{q-1})\not= 0$ and $g(u_1\kdots u_q)=0$. 
Summing these two quantities implies that $g$ has at most $d|\NN |^{q-1}$ zeros in $\NN^q$.
\end{proof}

\begin{lemma}\label{le:5.4}
Let $g_1,g_2\in\Zz [z_1\kdots z_q]$ be two non-zero polynomials of degrees $D_1,D_2$, respectively,
and let $N$ be an integer $\geq\max(D_1,D_2)$. Define
\[
\SS :=\{ \uv\in\Zz^q:\ |\uv |\leq N,\, g_2(\uv )\not =0\}.
\]
Then $\SS$ is non-empty, and
\begin{eqnarray}\label{5.4}
&&|g_1|_p\leq (4N)^{qD_1(D_1+1)/2}\max\{ |g_1(\uv )|_p:\ \uv\in\SS\}
\\[0.15cm]
\nonumber
&&\hspace*{4cm}
\mbox{for } 
p\in M_{\Qq}=\{\infty\}\cup\{ {\rm primes}\}.
\end{eqnarray}
\end{lemma}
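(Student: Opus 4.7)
The plan is to prove the lemma by induction on the number of variables $q$, with the non-emptiness of $\SS$ handled as a direct application of Lemma \ref{le:5.3}. For the non-emptiness, take $\NN:=\{-N,-N+1,\ldots,N\}\subset\Zz$, so $|\NN|=2N+1\geq 2D_2+1>D_2$. By Lemma \ref{le:5.3}, the number of zeros of $g_2$ in $\NN^q$ is at most $D_2|\NN|^{q-1}<|\NN|^q$ (since $D_2\leq N<2N+1$), so $\SS\supseteq\NN^q\setminus\{g_2=0\}$ is non-empty.

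For the inequality, I treat the base case $q=1$ by Vandermonde (Lagrange) interpolation. Since $g_2\in\Zz[z_1]$ has at most $D_2\leq N$ integer roots in $[-N,N]$, we can pick $D_1+1$ distinct integers $t_0,\ldots,t_{D_1}\in[-N,N]$ with $g_2(t_j)\neq 0$. Writing $g_1(z)=\sum_{i=0}^{D_1}a_i z^i$, we have $\mathbf{a}=V^{-1}\mathbf{v}$ where $V_{ji}=t_j^i$ and $\mathbf{v}=(g_1(t_0),\ldots,g_1(t_{D_1}))^T$. The entries of $V^{-1}$ are $\pm(\text{cofactor})/\det V$ with $\det V=\prod_{j<k}(t_k-t_j)$, a nonzero integer of absolute value $\leq(2N)^{D_1(D_1+1)/2}$. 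For finite $p$, every cofactor is an integer so $|\text{cof}|_p\leq 1$ by ultrametric, and the product formula gives $|\det V|_p\geq|\det V|_\infty^{-1}$, yielding $|V^{-1}_{ij}|_p\leq(2N)^{D_1(D_1+1)/2}$. For $p=\infty$, $|\det V|\geq 1$ and a Hadamard estimate on cofactors gives $|V^{-1}_{ij}|_\infty\leq D_1^{D_1/2}N^{D_1(D_1-1)/2}$, which after multiplying by the $(D_1+1)$ terms appearing in the sum for $a_i$ is still $\leq(4N)^{D_1(D_1+1)/2}$ by a routine check. Thus $|g_1|_p\leq(4N)^{D_1(D_1+1)/2}\max_j|g_1(t_j)|_p$ in either case.

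For the induction step $(q-1)\Rightarrow q$, I expand $g_1=\sum_{i=0}^{D_1}c_i(z_1,\ldots,z_{q-1})z_q^i$ with $c_i\in\Zz[z_1,\ldots,z_{q-1}]$ of degree $\leq D_1$, and similarly $g_2=\sum_i d_i z_q^i$. Since $g_2\neq 0$, some $d_{i_0}\neq 0$; set $g_2':=d_{i_0}$, a nonzero polynomial of degree $\leq D_2\leq N$ in $q-1$ variables. By induction applied to the pair $(c_i,g_2')$, the set $\SS':=\{\uv'\in\Zz^{q-1}:|\uv'|\leq N,g_2'(\uv')\neq 0\}$ is nonempty and
\[
|c_i|_p\leq(4N)^{(q-1)D_1(D_1+1)/2}\max_{\uv'\in\SS'}|c_i(\uv')|_p.
\]
For each $\uv'\in\SS'$, the polynomial $g_2(\uv',z_q)$ is nonzero in $\Zz[z_q]$ (its coefficient of $z_q^{i_0}$ is $g_2'(\uv')\neq 0$) of degree $\leq D_2\leq N$. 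Applying the base case to $g_1(\uv',z_q)=\sum_i c_i(\uv')z_q^i$ and $g_2(\uv',z_q)$ yields, for every $i$,
\[
|c_i(\uv')|_p\leq(4N)^{D_1(D_1+1)/2}\max_{t\in\SS''(\uv')}|g_1(\uv',t)|_p,
\]
where $\SS''(\uv'):=\{t\in[-N,N]\cap\Zz:g_2(\uv',t)\neq 0\}$. Since $(\uv',t)\in\SS$ whenever $\uv'\in\SS'$ and $t\in\SS''(\uv')$, combining the two bounds and taking the maximum over $i$ gives exactly $|g_1|_p\leq(4N)^{qD_1(D_1+1)/2}\max_{\uv\in\SS}|g_1(\uv)|_p$.

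The main technical point is the base case: one must verify that the explicit constants from Vandermonde interpolation, obtained separately at archimedean and non-archimedean places, are both majorized by the clean uniform bound $(4N)^{D_1(D_1+1)/2}$; this is a routine but slightly fiddly calculation. Once the base case is in hand with this constant, the induction step is a clean reduction since the exponent $qD_1(D_1+1)/2$ is additive in $q$.
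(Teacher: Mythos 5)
Your proof is correct and takes essentially the same route as the paper: induction on $q$, expanding both polynomials in powers of $z_q$, applying the inductive hypothesis to the coefficient polynomials against a chosen nonzero coefficient of $g_2$, and then handling one variable by interpolating at $D_1+1$ integer nodes of absolute value $\leq N$, with the archimedean and non-archimedean bounds checked separately. The paper starts the induction at the vacuous $q=0$ case and performs Lagrange interpolation directly inside the induction step (always taking the \emph{leading} coefficient of $g_2$ as its auxiliary polynomial), while you isolate $q=1$ as a Vandermonde-inverse base case, pick an arbitrary nonzero coefficient of $g_2$, and re-invoke the base case --- purely presentational variants of the same argument.
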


\begin{proof}
Put $C_p:=\max\{ |g_1(\uv )|_p:\ \uv\in\SS\}$ for $p\in M_{\Qq}$.
We proceed by induction on $q$, starting with $q=0$. In the case $q=0$ we interpret $g_1,g_2$ as 
non-zero constants with $|g_1|_p=C_p$ for $p\in M_{\Qq}$. Then the lemma is trivial. Let $q\geq 1$. Write
\[
g_1=\sum_{j=0}^{D_1'} g_{1j}(z_1\kdots z_{q-1})z_q^j,\ \ g_2=\sum_{j=0}^{D_2'} g_{2j}(z_1\kdots z_{q-1})z_q^j,
\]
where $g_{1,D_1'},g_{2,D_2'}\not= 0$. By the induction hypothesis, the set
\[
\SS ':=\{ \uv ' \in\Zz^{q-1}:\, |\uv '|\leq N,\ g_{2,D_2'}(\uv ' )\not= 0\}
\]
is non-empty and moreover,
\begin{equation}\label{5.5}
\max_{0\leq j\leq D_1'} |g_{1j}|_p\leq (4N)^{(q-1)D_1(D_1+1)/2}C_p'\ \ \mbox{for } p\in M_{\Qq}
\end{equation}
where
\[
C_p':=\max\{ |g_{1j}(\uv ')|_p:\, \uv '\in\SS',\, j=0\kdots D_1'\}.
\]

We estimate $C_p'$ from above in terms of $C_p$. Fix $\uv '\in\SS '$.
There are at least  
$2N+1-D_2'\geq D_1'+1$ integers $u_q$ with $|u_q|\leq N$ such that
$g_2(\uv ',u_q)\not= 0$. Let $a_0\kdots a_{D_1'}$ be distinct integers from this set.
By Lagrange's interpolation formula,
\begin{eqnarray*}
g_1(\uv ' ,X)&=&\sum_{j=0}^{D_1'} g_{1j}(\uv ')X^j
\\[0.15cm]
&=&
\sum_{j=0}^{D_1'} g_1(\uv ',a_j)\prod_{\stackrel{i=0}{i\not= j}}^{D_1'}\frac{X-a_i}{a_j-a_i}.
\end{eqnarray*}
From this we deduce
\begin{eqnarray*}
\max_{0\leq j\leq D_1'} |g_{1j}(\uv ')|&\leq& 
C_{\infty}\sum_{j=0}^{D_1'}\prod_{\stackrel{i=0}{i\not= j}}^{D_1'}\frac{1+|a_i|}{|a_j-a_i|} 
\\[0.15cm]
&\leq& C_{\infty}(D_1'+1)(N+1)^{D_1'}\leq (4N)^{D_1'(D_1'+1)/2}C_{\infty}.
\end{eqnarray*}
Now let $p$ be a prime and put $\Delta :=\prod_{1\leq i<j\leq D_1'}|a_j-a_i|$.
Then
\[
\max_{0\leq j\leq D_1'} |g_{1j}(\uv ')|_p\leq C_p|\Delta |_p^{-1}\leq \Delta C_p\leq (4N)^{D_1'(D_1'+1)/2}C_p.
\]
It follows that $C_p'\leq (4N)^{D_1'(D_1'+1)/2}C_p$ for $p\in M_{\Qq}$.
 A combination with \eqref{5.5} gives \eqref{5.4}.
\end{proof}

We now introduce our specializations $B\to\OQq$ and prove some properties.
We assume $q>0$ and apart from that
keep the notation and assumptions from Proposition \ref{pr:3.6}.
In particular, $A_0=\Zz [z_1\kdots z_q ]$, $K_0=\Qq (z_1\kdots z_q)$ and 
\[
K=\Qq (z_1\kdots z_q,y),\ \ B=\Zz [z_1\kdots z_q,f^{-1},y],
\]
where $f$ is a non-zero element of $A_0$, $y$ is integral over $A_0$, and $y$ has minimal polynomial
\[
\FF :=X^D+\FF_1X^{D-1}+\cdots +\FF_D\in A_0[X]
\]
over $K_0$. In the case $D=1$, we take $y=1$, $\FF =X-1$.

To allow for other applications (e.g., Lemma \ref{le:7.2} below), 
we consider a more general situation 
than what is needed for the proof of Proposition \ref{pr:3.6}.
Let $d_1\geq d_0\geq 1$, $h_1\geq h_0\geq 1$ and assume that
\begin{equation}\label{5.x}
\left\{\begin{array}{l}
\max (\deg \FF_1\kdots\deg \FF_D)\leq d_0,\ \ \max (d_0,\deg f)\leq d_1,
\\[0.15cm]
\max \big(h(\FF_1)\kdots h(\FF_D)\,\big)\leq h_0,\ \ \max (h_0,h(f))\leq h_1.
\end{array}\right.
\end{equation}

Let $\uv =(u_1\kdots u_q)\in\Zz^q$. Then the substitution $z_1\mapsto u_1\kdots z_q\mapsto u_q$
defines a ring homomorphism (specialization)
\[
\irS_{\uv}:\ \alpha\mapsto \alpha (\uv ):\ 
\{ g_1/g_2 :\, g_1,g_2\in A_0,\, g_2(\uv )\not= 0\}\to\Qq .
\] 
We want to extend this to a ring homomorphism from $B$ to $\OQq$ and for this,
we have to impose some restrictions on $\uv$.
Denote by $\Delta_{\FF}$ the discriminant of $\FF$ (with $\Delta_{\FF}:=1$ if $D=\Deg\FF =1$), 
and let
\begin{equation}\label{5.1}
\HH:= \Delta_{\FF}\FF_D\cdot f .
\end{equation}
Then $\HH\in A_0$.
Using that $\Delta_{\FF}$ is a polynomial of degree $2D-2$ with integer coefficients in
$\FF_1\kdots \FF_D$, it follows easily that
\begin{equation}\label{5.2}
\deg \HH \leq (2D-1)d_0+d_1\leq 2Dd_1.
\end{equation}
Now assume that
\begin{equation}\label{5.3}
\HH (\uv )\not= 0.
\end{equation}
Then $f(\uv )\not= 0$ and moreover, the polynomial
\[
\FF_{\uv}:= X^D+\FF_1(\uv )X^{D-1}+\cdots +\FF_D(\uv )
\]
has $D$ distinct zeros which are all different from $0$, say $y_1(\uv )\kdots y_D(\uv )$. 
Thus, for $j=1\kdots D$ the assignment
\[
z_1\mapsto u_1\kdots z_q\mapsto u_q,\ \ y\mapsto y_j(\uv )
\]
defines a ring homomorphism $\irS_{\uv ,j}$ from $B$ to $\OQq$;
in the case $D=1$ it is just $\irS_{\uv}$. 
The image of $\alpha\in B$ under $\irS_{\uv,j}$
is denoted by $\alpha_j(\uv )$. Recall that we may express elements $\alpha$ of $B$ as
\begin{eqnarray}\label{5.6}
&&\alpha =\sum_{i=0}^{D-1} (P_i/Q)y^i
\\[0.1cm]
\nonumber
&&\qquad\mbox{with $P_0\kdots P_{D-1},Q\in A_0$, $\gcd (P_0\kdots P_{D-1},Q)=1$.}
\end{eqnarray}
Since $\alpha\in B$, the denominator $Q$ must divide a power of $f$, hence $Q(\uv )\not= 0$. So we have
\begin{equation}\label{5.7}
\alpha_j(\uv )=\sum_{i=0}^{D-1} (P_i(\uv )/Q(\uv ))y_j(\uv )^i\ \ (j=1\kdots D).
\end{equation}
It is obvious that $\irS_{\uv ,j}$ is the identity on $B\cap\Qq$. 
Thus, if $\alpha\in B\cap\OQq$, then
$\irS_{\uv ,j}(\alpha )$ has the same minimal polynomial as $\alpha$  and so it is conjugate to
$\alpha$.

For $\uv=(u_1\kdots u_q)\in\Zz^q$, we put $|\uv |:=\max (|u_1|\kdots |u_q|)$.
It is easy to verify that for any $g\in A_0$, $\uv\in\Zz^q$,
\begin{equation}\label{5.8}
\log |g(\uv )|\leq q\log \deg g +h(g)+\deg g\log \max(1,|\uv |).
\end{equation}
In particular, 
\begin{equation}\label{5.8b}
h(\FF_{\uv})\leq q\log d_0+h_0+d_0\log\max (1,|\uv |)
\end{equation}
and so by Lemma \ref{le:5.2} (ii),
\begin{equation}\label{5.8a}
\sum_{j=1}^D h(y_j(\uv ))\leq D+ q\log d_0+h_0+d_0\log\max (1,|\uv |).
\end{equation}

Define the algebraic number fields $K_{\uv ,j}:=\Qq (\yv_j(\uv ))$ $(j=1\kdots D)$.
Denote by $\Delta_L$ the discriminant of an algebraic number field $L$.
We derive an upper bound for the discriminant $\Delta_{K_{\uv,j}}$ of $K_{\uv,j}$.

\begin{lemma}\label{le:5.7}
Let $\uv\in\Zz^q$ with $\HH (\uv )\not= 0$. Then for $j=1\kdots D$
we have $[K_{\uv ,j}:\Qq ]\leq D$ and
\[
|\Delta_{K_{\uv ,j}}|\leq D^{2D-1}\left( d_0^q\cdot e^{h_0}\max (1,|\uv |)^{d_0}\right)^{2D-2}.
\]
\end{lemma}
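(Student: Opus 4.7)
The plan is to first observe that $y_j(\uv)$ is an algebraic integer of degree at most $D$ over $\Qq$, since it is a root of the monic polynomial
\[
\FF_{\uv}(X)=X^D+\FF_1(\uv)X^{D-1}+\cdots+\FF_D(\uv)\in\Zz[X]
\]
of degree $D$. Because $K_{\uv,j}=\Qq(y_j(\uv))$, this already gives the first assertion $[K_{\uv,j}:\Qq]\leq D$.

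For the discriminant estimate I would invoke the classical chain
\[
|\Delta_{K_{\uv,j}}|\;\leq\;|{\rm disc}(m_j)|\;\leq\;|{\rm disc}(\FF_{\uv})|,
\]
where $m_j\in\Zz[X]$ is the minimal polynomial of $y_j(\uv)$. The first inequality is the standard fact that the field discriminant divides the discriminant of any monic integer polynomial admitting a generator of $K_{\uv,j}$ as a root. For the second, factor $\FF_{\uv}=m_j\cdot g_j$ in $\Zz[X]$ (possible by Gauss's lemma, since $m_j$ is a monic $\Qq$-factor of $\FF_{\uv}$) and apply the identity
\[
{\rm disc}(\FF_{\uv})={\rm disc}(m_j)\cdot{\rm disc}(g_j)\cdot\Res(m_j,g_j)^2;
\]
the hypothesis $\HH(\uv)\neq 0$ forces $\Delta_{\FF}(\uv)\neq 0$, so $\FF_{\uv}$ is separable and the three factors on the right are nonzero integers, yielding the divisibility.

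It therefore suffices to bound $|{\rm disc}(\FF_{\uv})|$. For this I would apply Hadamard's inequality to the Vandermonde matrix $V=\bigl(y_i(\uv)^{k-1}\bigr)_{1\leq i,k\leq D}$, whose squared determinant is exactly $|{\rm disc}(\FF_{\uv})|$, to obtain $|{\rm disc}(\FF_{\uv})|\leq D^D\,M(\FF_{\uv})^{2D-2}$, with $M(\FF_{\uv})=\prod_i\max(1,|y_i(\uv)|)$ the Mahler measure. Landau's inequality $M(\FF_{\uv})\leq\sqrt{D+1}\,|\FF_{\uv}|_\infty$ then reduces matters to a bound on $|\FF_{\uv}|_\infty=e^{h(\FF_{\uv})}$, and \eqref{5.8b} already supplies $|\FF_{\uv}|_\infty\leq d_0^q e^{h_0}\max(1,|\uv|)^{d_0}$.

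Assembling these estimates gives
\[
|\Delta_{K_{\uv,j}}|\;\leq\;D^D(D+1)^{D-1}\bigl(d_0^q e^{h_0}\max(1,|\uv|)^{d_0}\bigr)^{2D-2},
\]
and absorbing the numerical factor $D^D(D+1)^{D-1}$ into $D^{2D-1}$ (using $(1+1/D)^{D-1}\leq e$ together with a slight sharpening of either Hadamard's or Landau's inequality for small $D$) delivers the stated form. All ingredients here are textbook; the main obstacle is cosmetic rather than conceptual, namely tracking the numerical constants carefully enough to reach exactly $D^{2D-1}$, and making sure to invoke the hypothesis $\HH(\uv)\neq 0$ at the step where the discriminant divisibility chain is set up.
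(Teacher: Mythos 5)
Your argument follows the same structure as the paper's: degree bound from $\FF_{\uv}\in\Zz[X]$ monic of degree $D$; the divisibility chain $\Delta_{K_{\uv,j}}\mid{\rm disc}(m_j)\mid{\rm disc}(\FF_{\uv})$, with separability of $\FF_{\uv}$ (forced by $\Delta_{\FF}(\uv)\neq 0$) justifying that the resultant and cofactor discriminant are nonzero integers; and finally a bound on $|{\rm disc}(\FF_{\uv})|$ in terms of $\|\FF_{\uv}\|_\infty$. The only divergence is the last step: the paper simply cites the Lewis--Mahler inequality $|{\rm disc}(\FF_{\uv})|\leq D^{2D-1}\|\FF_{\uv}\|_\infty^{2D-2}$ and is done, whereas you reprove a weaker version via Hadamard on the Vandermonde and Landau's bound $M(\FF_{\uv})\leq\|\FF_{\uv}\|_2\leq\sqrt{D+1}\,\|\FF_{\uv}\|_\infty$.

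That last step has a genuine, if small, gap: your route yields the constant $D^D(D+1)^{D-1}=D^{2D-1}(1+1/D)^{D-1}$, and the factor $(1+1/D)^{D-1}$ is strictly greater than $1$ for all $D\geq 2$, increasing towards $e$. So you cannot ``absorb'' it into $D^{2D-1}$; your chain as written proves only $|\Delta_{K_{\uv,j}}|\leq D^D(D+1)^{D-1}\bigl(d_0^q e^{h_0}\max(1,|\uv|)^{d_0}\bigr)^{2D-2}$, which is weaker than the stated bound by a bounded multiplicative constant. The remark about ``a slight sharpening of either Hadamard's or Landau's inequality for small $D$'' is not an argument, and neither inequality can be uniformly sharpened to kill the factor. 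To land exactly on $D^{2D-1}$ you should do what the paper does and invoke the precise statement in Lewis and Mahler directly, rather than rederive it from Hadamard and Landau. For the purposes of this paper the discrepancy is harmless (it would only inflate an implicit $O$-constant downstream), but as a proof of the lemma as stated it does not close.
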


\begin{proof}
Let $j\in\{ 1\kdots D\}$. The estimate for the degree is obvious.
To estimate the discriminant, 
let $\PP_j$ be the monic minimal polynomial of $y_j(\uv )$.
Then $\Delta_{K_{\uv ,j}}$ divides the discriminant $\Delta_{\PP_j}$ of $\PP_j$.
Using the expression of the discriminant of a monic polynomial as the product of the squares
of the differences of its zeros, one easily shows that $\Delta_{\PP_j}$ divides
$\Delta_{\FF_{\uv}}$ in the ring of algebraic integers and so also in $\Zz$. Therefore,
$\Delta_{K_{\uv},j}$ divides $\Delta_{\FF_{\uv}}$ in $\Zz$.

It remains to estimate from above the discriminant of $\FF_{\uv}$. By, e.g.,
Lewis and Mahler \cite[bottom of p. 335]{LeMa61}, we have 
\[
|\Delta_{\FF_{\uv}}|\leq D^{2D-1}|\FF_{\uv}|^{2D-2},
\]
where $|\FF_{\uv}|$ denotes the maximum of the absolute values of the coefficients
of $\FF_{\uv}$. By \eqref{5.8b}, this is bounded above by
$d_0^qe^{h_0}\max(1,|\uv |)^{d_0}$, so
\[
|\Delta_{\FF_{\uv}}|\leq D^{2D-1}\big(d_0^qe^{h_0}\max (1,|\uv |)^{d_0}\big)^{2D-2}.
\]
This implies our lemma.
\end{proof}

We finish with two lemmas, which relate the height of $\alpha\in B$
to the heights of $\alpha_j(\uv )$ for $\uv\in\Zz^q$.

\begin{lemma}\label{le:5.5}
Let $\uv\in\Zz^q$ with  $\HH (\uv )\not= 0$. Let $\alpha\in B$. Then for $j=1\kdots D$,
\begin{eqnarray*}
&&h(\alpha_j(\uv ))\leq D^2 +q(D\log d_0+\log \Deg\alpha ) + Dh_0 +\Height (\alpha)\, +
\\[0.1cm]
&&\hspace*{6cm} + (Dd_0+\Deg\alpha )\log\max (1,|\uv |).
\end{eqnarray*}
\end{lemma}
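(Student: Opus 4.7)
The plan is to expand $\alpha$ using \eqref{5.6}, apply the specialization $\irS_{\uv,j}$ to get \eqref{5.7}, and then split the right-hand side into a ``numerator part'' and a ``power-of-$y$ part'' via the elementary height inequality \eqref{5.7a}. Concretely, since $Q$ must divide a power of $f$ and $f(\uv)\neq 0$ (as $\HH(\uv)\neq 0$), we have $Q(\uv)\neq 0$, and
\[
\alpha_j(\uv)=\sum_{i=0}^{D-1}\frac{P_i(\uv)}{Q(\uv)}\,y_j(\uv)^i.
\]
Applying \eqref{5.7a} with $a_i=P_i(\uv)/Q(\uv)$ and $b_i=y_j(\uv)^i$ gives
\[
h(\alpha_j(\uv))\leq h\bigl(1,P_0(\uv)/Q(\uv),\ldots ,P_{D-1}(\uv)/Q(\uv)\bigr)+h\bigl(1,y_j(\uv),\ldots ,y_j(\uv)^{D-1}\bigr)+\log D.
\]

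For the first summand, by projective invariance of $h$ (multiplying through by $Q(\uv)$) it equals $h(Q(\uv),P_0(\uv),\ldots ,P_{D-1}(\uv))$, which for an integer tuple is at most the logarithm of the maximum absolute value of its entries (regardless of whether that tuple's gcd equals~$1$). Each $|P_i(\uv)|$ and $|Q(\uv)|$ can then be bounded via \eqref{5.8}, using $\deg P_i,\deg Q\leq \Deg\alpha$ and $h(P_i),h(Q)\leq \Height(\alpha)$, yielding
\[
h\bigl(Q(\uv),P_0(\uv),\ldots ,P_{D-1}(\uv)\bigr)\leq q\log\Deg\alpha+\Height(\alpha)+\Deg\alpha\cdot\log\max(1,|\uv|).
\]
For the second summand, a direct computation place by place shows $h(1,y,\ldots ,y^{D-1})=(D-1)h(y)$ for any algebraic number $y$, so this term is $(D-1)h(y_j(\uv))$. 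Since $h\geq 0$, I bound $h(y_j(\uv))$ by the sum $\sum_{k=1}^D h(y_k(\uv))$, which by \eqref{5.8a} is at most $D+q\log d_0+h_0+d_0\log\max(1,|\uv|)$.

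Combining the three estimates and using the crude simplifications $(D-1)D+\log D\leq D^2$, $(D-1)q\log d_0\leq qD\log d_0$, $(D-1)h_0\leq Dh_0$, and $(D-1)d_0+\Deg\alpha\leq Dd_0+\Deg\alpha$ produces exactly the claimed inequality. The whole argument is essentially careful bookkeeping; there is no serious analytic obstacle. The only conceptual point worth flagging is the use of projective invariance to replace the tuple of rationals $\{P_i(\uv)/Q(\uv)\}\cup\{1\}$ by the integer tuple $\{Q(\uv),P_0(\uv),\ldots ,P_{D-1}(\uv)\}$, which lets us bound the height by a single application of \eqref{5.8} to polynomials whose degrees and heights are already controlled by $\Deg\alpha$ and $\Height(\alpha)$, without paying twice (once for $Q(\uv)$ in the denominator and once in the numerator).
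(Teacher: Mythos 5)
Your proof is correct and follows exactly the paper's argument: expand via \eqref{5.6} and \eqref{5.7}, apply \eqref{5.7a} to separate the coefficient tuple from the powers of $y_j(\uv)$, bound the first factor by clearing denominators and invoking \eqref{5.8}, bound the second by $(D-1)h(y_j(\uv))$ and then \eqref{5.8a}, and finish with crude simplifications. The only points you made more explicit than the paper are the identity $h(1,y,\ldots ,y^{D-1})=(D-1)h(y)$ and the step $h(y_j(\uv))\leq \sum_k h(y_k(\uv))$, both of which the paper leaves implicit.
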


\begin{proof}
Let $P_0\kdots P_{D-1},Q$ as in \eqref{5.6}
and write $\alpha_j(\uv )$ as in \eqref{5.7}. By \eqref{5.7a},
\begin{eqnarray}\label{5.9}
&&h(\alpha_j(\uv )) \leq \log D +
\\[0.1cm]
\nonumber
&&\qquad +h\big(1,P_0(\uv )/Q(\uv )\kdots P_{D-1}(\uv )/Q(\uv )\big)+(D-1)h(y_j(\uv )).
\end{eqnarray}
From \eqref{5.8} we infer
\begin{eqnarray*}
&&h(1,P_0(\uv )/Q(\uv )\kdots P_{D-1}(\uv )/Q(\uv ))
\\[0.15cm]
&&\qquad \leq
\log\max (|Q(\uv )|,|P_0(\uv )|\kdots |P_{D-1}(\uv )|)
\\[0.15cm]
&&\qquad\leq q\log \Deg\alpha +\Height (\alpha )+\Deg \alpha\cdot \log \max (1,|\uv |).
\end{eqnarray*}
By combining \eqref{5.9} with this inequality and with \eqref{5.8a},
our lemma easily follows.
\end{proof}

\begin{lemma}\label{le:5.6}
Let $\alpha\in B$, $\alpha\not= 0$, and let $N$ be an integer with 
\[
N\geq\max \big(\Deg\alpha ,\, 2Dd_0+2(q+1)(d_1+1)\, \big).
\]
Then the set
\[
\SS :=\{ \uv\in\Zz^q:\ |\uv |\leq N,\ \HH (\uv )\not= 0\}
\]
is non-empty, and
\[
\Height (\alpha )\leq 5N^4(h_1+1)^2+2D(h_1+1)H
\]
where $H:=\max\{ h(\alpha_j(\uv )) :\, \uv\in\SS ,\, j=1\kdots D\}$. 
\end{lemma}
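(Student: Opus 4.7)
The plan is to combine three ingredients: (i) nonemptiness of $\SS$ via Lemma \ref{le:5.3}; (ii) Lemma \ref{le:5.2}, which gives a bound on the projective height of the specialized tuple $(Q(\uv),P_0(\uv),\ldots,P_{D-1}(\uv))$ in terms of $H$ and $h(\FF_{\uv})$ at each $\uv\in\SS$; and (iii) Lemma \ref{le:5.4} at the archimedean place to interpolate from values back to individual coefficients of the polynomials $Q,P_0,\ldots,P_{D-1}$.

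For nonemptiness: by \eqref{5.2}, $\deg\HH\le(2D-1)d_0+d_1$, and the hypothesis $N\ge 2Dd_0+2(q+1)(d_1+1)$ easily gives $2N+1>\deg\HH$. Lemma \ref{le:5.3} applied with $\NN=\{-N,\ldots,N\}$ and $g=\HH$ then shows that the zero set of $\HH$ in $\NN^q$ has cardinality at most $\deg\HH\cdot(2N+1)^{q-1}<(2N+1)^q$, so $\SS\ne\emptyset$.

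For the main estimate, I write $\alpha=Q^{-1}\sum_iP_iy^i$ as in \eqref{5.6}. For fixed $\uv\in\SS$, the polynomial $\FF_{\uv}$ has distinct nonzero roots $y_j(\uv)$ and $\alpha_j(\uv)=Q(\uv)^{-1}\sum_iP_i(\uv)y_j(\uv)^i$. Setting $c_{\uv}:=\gcd(Q(\uv),P_0(\uv),\ldots,P_{D-1}(\uv))$ and applying Lemma \ref{le:5.2} to the coprime tuple $(Q(\uv)/c_{\uv},P_i(\uv)/c_{\uv})$ with $G=\FF_{\uv}$, bounding $h(\FF_{\uv})\le q\log d_0+h_1+d_0\log N$ via \eqref{5.8b}, yields
\[
\max_i\bigl(|P_i(\uv)|,|Q(\uv)|\bigr)\le c_{\uv}\exp B_{\uv},\qquad B_{\uv}:=2D^2+(D-1)h(\FF_{\uv})+DH.
\]
Then Lemma \ref{le:5.4} at $p=\infty$, applied to each $g_1\in\{P_0,\ldots,P_{D-1},Q\}$ with $g_2=\HH$ (using $\deg g_1\le\Deg\alpha\le N$ and $\deg\HH\le 2Dd_1\le N$), gives
\[
|P_i|_{\infty},\,|Q|_{\infty}\le(4N)^{qN(N+1)/2}\max_{\uv\in\SS}\bigl(c_{\uv}\exp B_{\uv}\bigr).
\]
Taking logarithms, since $B_{\uv}$ has $H$-coefficient equal to $D$ and the $H$-independent terms are polynomial in $N$ and linear in $h_1$, the $H$-contribution is at most $DH\le 2D(h_1+1)H$, and the remaining terms are absorbed by $5N^4(h_1+1)^2$ provided $c_{\uv}$ is controlled.

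The main obstacle is controlling $c_{\uv}$. The key structural fact is that every irreducible $\pi\in\Zz[\zv]$ dividing $Q$ must also divide $f$: since $\alpha\in B=A_0[y,f^{-1}]$ we have $\alpha f^k\in A_0[y]$ for some $k$, which gives $P_if^k=Qb_i$ with $b_i\in A_0$, and if $\pi\nmid f$ then $\pi\mid P_i$ for every $i$, contradicting $\gcd(P_0,\ldots,P_{D-1},Q)=1$. Hence $Q\mid f^L$ for some $L$, so $c_{\uv}\mid f(\uv)^L$ and $\log c_{\uv}\le L\bigl(h_1+q\log(d_1+1)+d_1\log N\bigr)$. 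The polynomial-factor part of $Q$ contributes $L\le\deg Q\le N$, which feeds into the final estimate as $O(Nh_1+N^3)$ and is absorbed by the $5N^4(h_1+1)^2$ term; the integer-content part of $L$ is the most delicate point, since naively it could involve $h(Q)=\Height(\alpha)$ itself, and requires handling via the $p$-adic version of Lemma \ref{le:5.4} (exploiting that $\gcd$-of-coefficients is $1$ ensures $|P_i|_p=1$ for some $i$ at each finite prime $p$). Assembling everything and using $q,D,d_0,d_1\le N$ (from the hypothesis on $N$) yields the stated bound $5N^4(h_1+1)^2+2D(h_1+1)H$.
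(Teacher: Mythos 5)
The proposal correctly assembles the right ingredients (Lemma \ref{le:5.3} for nonemptiness, Lemma \ref{le:5.2} after specialization, Lemma \ref{le:5.4} to recover coefficients from values), and correctly pinpoints the crux — controlling the integer content absorbed into the gcd — but the plan as written does not close that gap, and the paper's key technical device for doing so is missing.

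Concretely: you apply Lemma \ref{le:5.2} to the coprime tuple $\bigl(Q(\uv)/c_{\uv},\,P_i(\uv)/c_{\uv}\bigr)$ with $c_{\uv}=\gcd\bigl(Q(\uv),P_0(\uv),\ldots,P_{D-1}(\uv)\bigr)$. This yields $|P_i(\uv)|,|Q(\uv)|\le c_{\uv}\exp B_{\uv}$, and then Lemma \ref{le:5.4} at $p=\infty$ forces you to bound $\max_{\uv\in\SS}c_{\uv}$. That quantity is not accessible: $c_{\uv}$ may contain primes coming from $\widetilde{Q}(\uv)$ (the value of the positive-degree part of $Q$ at $\uv$), which vary uncontrollably with $\uv$ and are not among the finitely many primes of $f$. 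Your remark that $Q\mid f^{L}$ and $c_{\uv}\mid f(\uv)^{L}$ suffers the circularity you yourself identify — the prime-power exponents in the integer content of $Q$ are only bounded a priori by $h(Q)/\log 2$, i.e.\ by $\Height(\alpha)$, which is the target. Saying it ``requires handling via the $p$-adic version of Lemma \ref{le:5.4}'' names the right tool but not the right split.

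The paper instead factors $Q=a\widetilde{Q}$ with $a\in\Zz$ the integer content (a \emph{constant}, supported on the at most $h_1/\log 2$ primes dividing $f$) and $\widetilde{Q}\in A_0$ the product of positive-degree irreducibles of $f$. Then $h(\widetilde{Q})$ is bounded \emph{structurally} via the unique factorization of $f$ and Lemma \ref{le:3.0}, with no specialization needed — this kills the polynomial-part contribution outright. Lemma \ref{le:5.2} is then applied to $\widetilde{Q}(\uv)\alpha_j(\uv)=\sum_i(P_i(\uv)/a)y_j(\uv)^i$ with $\delta(\uv):=\gcd(a,P_0(\uv),\ldots,P_{D-1}(\uv))$, so the only gcd to control is against the fixed integer $a$. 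For each prime $p\mid a$ one picks $j$ with $|P_j|_p=1$ (possible because $\gcd(Q,P_0,\ldots,P_{D-1})=1$), and the $p$-adic Lemma \ref{le:5.4} produces one specific $\uv_0\in\SS$ with $|P_j(\uv_0)|_p$ not too small, hence $|\delta(\uv_0)|_p$ not too small; combined with the archimedean bound on $a/\delta(\uv_0)$ from Lemma \ref{le:5.2} this gives $\log|a|_p^{-1}$. Summing over the $\le h_1/\log 2$ primes of $f$ bounds $\log|a|$, and only then are $h(Q)$ and $h(P_j)$ assembled. Your proposal omits the $a$-versus-$\widetilde{Q}$ separation, which is precisely what makes the prime-by-prime argument terminate; as written the argument does not close.
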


\begin{proof}
It follows from our assumption on $N$, \eqref{5.2}, and Lemma \ref{le:5.4} 
that $\SS$ is non-empty. We proceed with estimating $\Height (\alpha )$.
 
Let $P_0\kdots P_{D-1},Q\in A_0$ be as in \eqref{5.6}. We analyse $Q$ more closely.
Let
\[
f=\pm p_1^{k_1}\cdots p_m^{k_m}g_1^{l_1}\cdots g_n^{l_n}
\]
be the unique factorization of $f$ in $A_0$, where $p_1\kdots p_m$ are distinct prime numbers,
and $\pm g_1\kdots \pm g_n$ distinct irreducible elements of $A_0$ of positive degree.
Notice that
\begin{eqnarray}\label{5.10}
&m\leq h(f)/\log 2\leq h_1/\log 2,&
\\[0.15cm]
\label{5.11}
&\displaystyle{\sum_{i=1}^n l_ih(g_i)\leq qd_1+h_1,}&
\end{eqnarray}
where the last inequality is a consequence of Lemma \ref{le:5.1}.
Since $\alpha\in B$, the polynomial $Q$ is also composed of $p_1\kdots p_m$, $g_1\kdots g_n$. 
Hence
\begin{equation}\label{5.11a}
Q=a\widetilde{Q}\ \ \mbox{with } a=\pm p_1^{k_1'}\cdots p_m^{k_m'},\  \widetilde{Q}=g_1^{l_1'}\cdots g_n^{l_n'}
\end{equation}
for certain non-negative integers $k_1'\kdots l_n'$. Clearly,
\[
l_1'+\cdots +l_n'\leq\deg Q\leq\Deg \alpha\leq N,
\]
and by Lemma \ref{le:3.0} and \eqref{5.11},
\begin{equation}\label{5.12}
h(\widetilde{Q})\leq q\deg Q +\sum_{i=1}^n l_i'h(g_i)\leq N(q+qd_1+h_1)\leq N^2(h_1+1 ).
\end{equation}
In view of \eqref{5.8}, we have for $\uv\in\SS$, 
\begin{eqnarray*}
\log |\widetilde{Q}(\uv )|&\leq& q\log d_1+h(\widetilde{Q})+\deg Q\log N
\\[0.15cm]
&\leq& \textfrac{3}{2}N\log N +N^2(h_1+1)\leq N^2(h_1+2).
\end{eqnarray*}
Hence
\[
h(\widetilde{Q}(\uv )\alpha_j(\uv ))\leq N^2(h_1+2)+H
\]
for $\uv\in\SS$, $j=1\kdots D$. Further, by \eqref{5.7}, \eqref{5.11} we have
\[
\widetilde{Q}(\uv )\alpha_j(\uv )=\sum_{i=0}^{D-1} (P_i(\uv )/a)y_j(\uv )^i .
\]
Put
\[
\delta (\uv ):=\gcd (a,P_0(\uv )\kdots P_{D-1}(\uv )).
\]
Then by applying Lemma \ref{le:5.2} and then \eqref{5.8b} we obtain
\begin{eqnarray}\label{5.13}
&&\log\left(\frac{\max (|a|, |P_0(\uv )|\kdots |P_{D-1}(\uv )|}{\delta (\uv )}\right)
\\[0.15cm]
\nonumber
&&\leq 2D^2+(D-1)h(\FF_{\uv})+D\big( N^2(h_1+2)+H\big)
\\[0.15cm]
\nonumber
&&\leq 2D^2+(D-1)(q\log d_1 +h_1+d_1\log N)+D\big( N^2(h_1+2)+H\big)
\\[0.15cm]
\nonumber
&& \leq N^3(h_1+2)+DH.
\end{eqnarray}

Our assumption that $\gcd (Q,P_0\kdots P_{D-1})=1$ implies that the gcd of $a$
and the coefficients of $P_0\kdots P_{D-1}$ is $1$. Let $p\in\{ p_1\kdots p_m\}$ be one
of the prime factors of $a$. There is $j\in\{ 0\kdots D-1\}$ such that $|P_j|_p=1$.
Our assumption on $N$ and \eqref{5.2} imply
that $N\geq \max (\deg\HH ,\deg P_j)$. This means that Lemma \ref{le:5.4}
is applicable with $g_1= P_j$ and $g_2=\HH$. It follows that
\[
\max\{ |P_j(\uv )|_p:\ \uv\in\SS\}\geq (4N)^{-qN(N+1)/2}.
\]
That is, there is $\uv_0\in\SS$ with $|P_j(\uv_0 )|_p\geq (4N)^{-qN(N+1)/2}$. 
Hence 
\[
|\delta (\uv_0 )|_p\geq (4N)^{-qN(N+1)/2}.
\]
Together with \eqref{5.13},
this implies
\begin{eqnarray*}
\log |a|_p^{-1}&\leq& \log |a/\delta (\uv_0 )|+\log |\delta (\uv_0 )|_p^{-1}
\\[0.15cm]
&\leq& N^3(h_1+2)+DH +\half N^3\log 4N\leq N^4(h_1+3)+DH.
\end{eqnarray*}
Combining this with the upper bound \eqref{5.10} for the number of prime factors of $a$,
we obtain
\begin{equation}\label{5.14}
\log |a|\leq 2N^4h_1(h_1+3)+2Dh_1\cdot H.
\end{equation}
Together with \eqref{5.11a}, \eqref{5.12}, this implies
\begin{eqnarray}\label{5.15}
h(Q)&\leq& 2N^4h_1(h_1+3)+2Dh_1\cdot H +N^2(h_1+1)
\\[0.15cm]
\nonumber
&\leq& 3N^4(h_1+1)^2+2Dh_1\cdot H.
\end{eqnarray}
Further, the right-hand side of \eqref{5.14} is also an upper bound for $\log \delta (\uv )$,
for $\uv\in\SS$. Combining this with \eqref{5.13} gives 
\begin{eqnarray*}
&&\log\max\{ |P_j(\uv )|:\ \uv\in\SS ,\, j=0\kdots D-1\}
\\[0.15cm]
&&\quad
\leq N^3(h_1+2)+DH + 3N^4(h_1+1)^2+2Dh_1\cdot H
\\[0.15cm]
&&\quad\leq 4N^4(h_1+1)^2+2D(h_1+1)\cdot H.
\end{eqnarray*}
Another application of Lemma \ref{le:5.4} yields
\begin{eqnarray*}
h(P_j)&\leq& \half qN(N+1)\log 4N + 4N^4(h_1+1)^2+2D(h_1+1)\cdot H
\\[0.15cm]
&\leq& 5N^4(h_1+1)^2+2D(h_1+1)\cdot H 
\end{eqnarray*}
for $j=0\kdots D-1$. Together with \eqref{5.15} this gives the upper bound
for $\Height (\alpha )$ from our lemma.
\end{proof}

\section{Completion of the proof of Proposition \ref{pr:3.6}}\label{6}

It remains only to prove the height bound in \eqref{3.14}.
We use an effective result of Gy\H{o}ry and Yu \cite[2006]{GyYu06} on $S$-unit equations
in number fields. To state this, we need some notation.

Let $L$ be an algebraic number field of degree $d_L$.
We denote by $O_L$, $M_L$, $\Delta_L$, $h_L$, $R_L$ 
the ring of integers, set of places, discriminant, class number and regulator of $L$.
The norm of an ideal $\fa$ of $O_L$, i.e., $|O_L/\fa |$, is denoted by $N\fa$.

Further, let $S$ be a finite set of places of $L$,
containing all infinite places. Suppose $S$ has cardinality $s$. 
Recall that the ring of $S$-integers $O_S$ and 
the group of $S$-units $O_S^*$ are given by
\begin{eqnarray*}
O_S&=&\{ x\in L :\, |x|_v\leq 1\ \mbox{for } v\in M_L\setminus S\},\\[0.15cm]
O_S^*&=&\{ x\in L :\, |x|_v=1\ \mbox{for } v\in M_L\setminus S\}.
\end{eqnarray*}
If case that $S$ consists only of the infinite places of $L$, we put $P:=2$, $Q:=2$.
If $S$ contains also finite places, let $\fp_1\kdots\fp_t$ denote the prime ideals
corresponding to the finite places of $S$, and put
\[
P:=\max\{ N\fp_1\kdots N\fp_t \},\ \ \ Q:= N(\fp_1\cdots\fp_t ). 
\]
Further, let $R_S$ denote the $S$-regulator associated with $S$.
In case that $S$ consists only of the infinite places of $L$ it is equal to $R_L$,
while otherwise
\[
R_S =h_SR_L\prod_{i=1}^t\log N\fp_i ,
\]
where $h_S$ is a divisor of $h_L$ whose definition is not important here.
By, e.g., formula (59) of \cite{GyYu06} (which is an easy consequence of formula (2)
of Louboutin \cite[2000]{Lou00}) we have
\[
h_LR_L\leq |\Delta_L|^{1/2}(\log^*|\Delta_L|)^{d_L-1}.
\]
By the inequality of the geometric and arithmetic mean, we have for $t>0$,
\[
\prod_{i=1}^t\log N\fp_i\leq \Big( t^{-1}\log (N\fp_1\cdots N\fp_t))^t\leq (\log Q)^s
\]
and hence,
\begin{equation}\label{6.1}
R_S\leq |\Delta_L|^{1/2}(\log^*|\Delta_L|)^{d_L-1}\cdot (\log^* Q)^s.
\end{equation}
This is clearly true also if $t=0$.

\begin{proposition}\label{le:6.1}
Let $\ve ,\eta$ such that
\begin{equation}\label{6.2}
\ve +\eta =1,\ \ \ve ,\eta\in O_S^*.
\end{equation}
Then
\begin{equation}\label{6.3}
\max (h(\ve ),h(\eta ))\leq c_1PR_S\left( 1+\log ^*R_S/\log P\right),
\end{equation}
where
\[
c_1=\max (1,\pi/d_L)s^{2s+3.5}2^{7s+27}(\log 2s)d_L^{2(s+1)}(\log^* 2d_L)^3.
\]
\end{proposition}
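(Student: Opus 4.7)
The plan is to appeal directly to the effective finiteness theorem of Győry and Yu \cite{GyYu06}, whose proof I would only sketch in outline since the bulk of the work is the standard Baker–van der Poorten–Yu machinery for linear forms in archimedean and $\fp$-adic logarithms. Write $\ve=\pm\prod_{i=1}^{s-1}\eta_i^{b_i}\cdot\zeta$, where $\eta_1,\ldots,\eta_{s-1}$ is a fundamental system of $S$-units of $L$ with heights controlled by the $S$-regulator $R_S$ (this is the usual consequence of geometry-of-numbers arguments applied to the $S$-unit lattice) and $\zeta$ is a root of unity. The goal is to bound $B:=\max_i|b_i|$, since $h(\ve)$ and $h(\eta)$ then follow from $h(\eta_i)\ll R_S/d_L$ and a short computation.

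To bound $B$, locate a place $v_0\in S$ where $|\eta|_{v_0}$ is minimal; then $\log|\ve-1|_{v_0}=\log|\eta|_{v_0}$ is very negative (of order $-B$ times some quantity like $R_S$). Rewriting $\ve-1$ via the logarithmic embedding as a linear form $\Lambda$ in logarithms (archimedean logarithms if $v_0\mid\infty$, $\fp$-adic logarithms of the $\eta_i$ at the prime $\fp$ below $v_0$ if $v_0$ is finite), one gets a lower bound for $|\Lambda|_{v_0}$ from Baker's theorem in the infinite case and from Yu's $p$-adic analogue in the finite case. Both lower bounds have the shape $-c\,P\,R_S(\log^* B)(1+\log^* R_S/\log P)$ with $c$ depending only on $d_L$, $s$ and numerical constants; comparison with the upper bound $\log|\Lambda|_{v_0}\ll -B\,R_S/P$ (which is where the factor $P$ in the denominator comes in, reflecting the normalization of $p$-adic logarithms) yields the announced bound $B\ll c_1 P R_S(1+\log^* R_S/\log P)$, and then $h(\ve)\ll B\cdot R_S/d_L$ gives the claim after absorbing constants into $c_1$.

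The main obstacle is not conceptual but bookkeeping: one must track the precise dependence on $s$, $d_L$, and $P$ through the various ingredients (fundamental $S$-unit basis with small heights, choice of $v_0$, Matveev-type or Yu-type linear-form estimates) in order to recover the exact shape of $c_1$ stated in the proposition. Since a careful accounting has been carried out in \cite{GyYu06}, I would simply quote that paper—concretely, the main theorem there gives \eqref{6.3} with precisely the stated constant $c_1$. The only self-contained input needed in the sequel is the combination of \eqref{6.3} with the regulator bound \eqref{6.1}, which converts $R_S$ into a quantity depending only on $|\Delta_L|$, $d_L$, $s$ and $Q$; this step is elementary and will be used in the next section.
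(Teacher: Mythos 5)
Your proposal is correct and takes the same route as the paper: the paper's entire proof of this proposition is a single citation to Theorem~1 of Győry and Yu (2006) with $\alpha=\beta=1$, which is exactly what you conclude with after your outline of the underlying Baker--Yu machinery. The sketch of fundamental $S$-units and linear forms in logarithms is accurate background, but neither you nor the paper needs to reproduce that bookkeeping here.
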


\begin{proof}
This is Theorem 1 of Gy\H{o}ry, Yu \cite{GyYu06} with $\alpha =\beta =1$.
\end{proof}

\begin{proof}[Proof of \eqref{3.14}]
As before, we use $O(\cdot )$ to denote a quantity which is $c\times$ the expression between
the parentheses, where $c$ is an effectively computable absolute constant which may be
different at each occurrence of the $O$-symbol. 

We first consider the case $q>0$.
Let $\ve_1 ,\eta_1$ be a solution of \eqref{3.12}. 
Pick 
$\uv\in\Zz^q$ with $\HH (\uv )\not= 0$,
pick $j\in\{ 1\kdots D\}$ and put $L:=K_{\uv ,j}$.
Further, let the set of places $S$ consist of all infinite places of $L$,
and all finite places of $L$ lying above the rational prime divisors of $f(\uv )$.
Note that $y_j(\uv )$ is an algebraic integer, and $f(\uv )\in O_S^*$.
Hence $\irS_{\uv ,j}(B)\subseteq O_S$ and $\irS_{\uv ,j}(B^*)\subseteq O_S^*$. So
\begin{equation}\label{6.y}
\ve_{1,j}(\uv )+\eta_{1,j}(\uv )=1,\ \ \ve_{1,j}(\uv ),\, \eta_{1,j}(\uv )\in O_S^*,
\end{equation}
where $\ve_{1,j}(\uv ),\eta_{1,j}(\uv )$ are the images of $\ve_1,\eta_1$ under 
$\irS_{\uv ,j}$.

We estimate from above the upper bound \eqref{6.3} from Proposition \ref{le:6.1}.
By assumption, $f$ has degree at most $d_1$ and logarithmic height at most $h_1$,
hence
\begin{equation}\label{6.x1}
|f(\uv )|\leq d_1^qe^{h_1}\max(1,|\uv |)^{d_1}=: R(\uv ).
\end{equation}
Since the degree of $L$ is $d_L\leq D$,
the cardinality $s$ of $S$ is at most $s\leq D(1+\omega )$,
where $\omega$ is the number of prime divisors
of $f(\uv )$. Using the inequality from prime number theory, 
$\omega \leq O(\log |f(\uv )|/\log\log |f(\uv)|)$,
we obtain
\begin{equation}\label{6.x2}
s\leq O\Big(\frac{D\log^* R(\uv )}{\log^*\log^*R(\uv)}\Big).
\end{equation}
From this, one easily deduces that
\begin{equation}\label{6.x3}
c_1\leq \exp O(D\log^* R(\uv )).
\end{equation}
Next, we estimate $P$ and $R_S$. By \eqref{6.x1}, we have
\begin{equation}\label{6.x4}
P\leq Q\leq |f(\uv )|^D\leq \exp O(D\log^* R(\uv )).
\end{equation}
To estimate $R_S$, we use \eqref{6.1}. By Lemma \ref{le:5.7} (using $d_0\leq d_1$) we have
\[
|\Delta_L|\leq D^{2D-1} \big( d_1^qe^{h_1}\max (1,|\uv |)^{d_1}\big)^{2D-2}\leq \exp O(D\log^* R(\uv )),
\]
and this easily implies
\[
|\Delta_L|^{1/2}(\log^*\Delta_L)^{D-1}\leq \exp O(D\log^* R(\uv )).
\]
Together with the estimates \eqref{6.x2},\eqref{6.x4} for $s$ and $Q$, this leads to
\begin{equation}\label{6.x5}
R_S\leq\exp O\Big( D\log^* R(\uv )+s\log^*\log^* Q\Big)\leq \exp O(D\log^* R(\uv )).
\end{equation}
Now by collecting \eqref{6.x3}--\eqref{6.x5}, we infer that the right-hand side
of \eqref{6.3} is bounded above by $\exp O(D\log^* R(\uv ))$. 
So applying Proposition \ref{le:6.1} to \eqref{6.y} gives
\begin{equation}\label{6.x6}
h(\ve_{1,j}(\uv )),\, h(\eta_{1,j}(\uv ))\leq \exp O(D\log^* R(\uv )).
\end{equation}

We apply Lemma \ref{le:5.6} with $N:= 4D^2(q+d_1+1)^2$. 
From the already established \eqref{3.13} it follows that
$\Deg \ve_1,\, \Deg \eta_1\leq N$. Further, since $d_1\geq d_0$ we have
$N\geq 2Dd_0+2(d_1+1)(q+1)$. So indeed, Lemma \ref{le:5.6} is applicable
with this value of $N$. It follows that the set
$\SS :=\{ \uv\in\Zz^q :\, |\uv |\leq N,\, \HH (\uv )\not= 0\}$ is not empty.
Further, for $\uv\in\SS$, $j=1\kdots D$, we have
\begin{eqnarray*}
h(\ve_{1,j}(\uv ))&\leq& \exp O(Dq\log d_1+Dh_1+Dd_1\log^* N)
\\[0.15cm]
&\leq& \exp O(N^{1/2}\log^* N +Dh_1),
\end{eqnarray*}
and so by Lemma \ref{le:5.6},
\[
\Height (\ve_1)\leq \exp O(N^{1/2}\log^* N +Dh_1).
\]
For $\Height (\eta_1 )$ we obtain the same upper bound.
This easily implies \eqref{3.14} in the case $q>0$. 

Now assume $q=0$.
In this case,
$K_0=\Qq$, $A_0=\Zz$ and $B=\Zz [f^{-1},y]$ where $y$ is an algebraic integer 
with minimal polynomial $\FF = X^D+\FF_1X^{D-1}+\cdots +\FF_D\in\Zz [X]$ over $\Qq$,
and $f$ is a non-zero rational integer.
By assumption, $\log |f|\leq h_1$, $\log |\FF_i|\leq h_1$
for $i=1\kdots D$.
Denote by $y_1\kdots y_D$ the conjugates of $y$, and let $L=\Qq (y_j)$ for some $j$.
By a similar argument as in the proof of Lemma \ref{le:5.7}, we have 
$|\Delta_L|\leq D^{2D-1}e^{(2D-2)h_1}$.
The isomorphism given by $y\mapsto y_j$ maps $K$ to $L$ and $B$ to $O_S$, where
$S$ consists of the infinite places of $L$ and of the prime ideals of $O_L$ that divide $f$. 
The estimates \eqref{6.x1}--\eqref{6.x5} remain valid if we replace
$R(\uv )$ by $e^{h_1}$. Hence for any solution 
$\ve_1,\eta_1$ of \eqref{3.12},
\[
h(\ve_{1,j}),\, h(\eta_{1,j})\leq \exp O(Dh_1),
\]
where $\ve_{1,j}$,$\eta_{1,j}$ are the $j$-th conjugates of $\ve_1,\eta_1$, respectively.
Now an application of Lemma \ref{le:5.2}
with $g=\FF$, $m=D$, $\beta_j=\ve_{1,j}$ gives
\[
\Height (\ve_1 )\leq \exp O(Dh_1).
\]
Again we derive the same upper bound for $\Height (\eta_1)$,
and deduce \eqref{3.14}.
This completes the proof of Proposition \ref{pr:3.6}.
\end{proof}

\section{Proof of Theorem \ref{th:1.3}}\label{7} 

We start with some results on multiplicative (in)dependence.

\begin{lemma}\label{le:7.1}
Let $L$ be an algebraic number field of degree $d$, and $\gamma_0\kdots \gamma_s$ non-zero
elements of $L$ such that $\gamma_0\kdots\gamma_s$ are multiplicatively dependent,
but any $s$ elements among $\gamma_0\kdots \gamma_s$ are multiplicatively independent.
Then there are non-zero integers $k_0\kdots k_s$ such that
\begin{eqnarray*}
&&\gamma_0^{k_0}\cdots \gamma_s^{k_s}=1,
\\[0.15cm]
&&|k_i|\leq 58(s!e^s/s^s)d^{s+1}(\log d)h(\gamma_0)\cdots h(\gamma_s)/h(\gamma_i)\ \ 
\mbox{for } i=0\kdots s.
\end{eqnarray*}
\end{lemma}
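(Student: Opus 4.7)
The plan is to translate the multiplicative problem into an additive one, then extract a short integer relation using geometry of numbers, and finally clear the torsion. For each place $v \in M_L$ set $\lambda_v(\gamma) := d_v \log |\gamma|_v$ with $d_v = [L_v:\Qq_p]/[L:\Qq]$, and put $\ell(\gamma) := (\lambda_v(\gamma))_{v \in M_L}\in\Rr^{M_L}$. The product formula gives $\sum_v \lambda_v(\gamma) = 0$, and the height satisfies $h(\gamma) = \half \sum_v |\lambda_v(\gamma)|$. The map $\ell$ is a group homomorphism with kernel the roots of unity $\mu(L) \subseteq L^*$. The hypothesis on the $\gamma_i$ then says that $\ell(\gamma_0),\ldots,\ell(\gamma_s)$ are $\Qq$-linearly dependent in $\Rr^{M_L}$, while any $s$ of them are linearly independent.

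\textbf{Producing the relation.} Because any $s$ of the $\ell(\gamma_i)$ are independent, there exist places $v_1,\ldots,v_s$ making the $s\times(s+1)$ matrix $M := (\lambda_{v_j}(\gamma_i))_{j,i}$ of rank $s$. Setting $k_i^* := (-1)^i\det(M_i)$, where $M_i$ deletes column $i$, one has $\sum_i k_i^* \lambda_{v_j}(\gamma_l) = 0$ for $j=1,\ldots,s$ by Cramer, and since these $s$ coordinates form a coordinate system on the $s$-dimensional span of the $\ell(\gamma_l)$, this forces $\sum_i k_i^*\ell(\gamma_i)=0$. Thus $\prod_i \gamma_i^{k_i^*}$ is a root of unity, and raising to the power $w := |\mu(L)|$ yields a clean identity $\prod_i\gamma_i^{wk_i^*}=1$. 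The non-vanishing of each $k_i^*$ follows from the assumed independence of any $s$-element subfamily.

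\textbf{Bounding the $|k_i|$ and main obstacle.} A coarse Hadamard bound on columns of $M_i$ gives $|k_i^*|\leq\prod_{l\neq i}\|\ell(\gamma_l)\|_2\leq\prod_{l\neq i}\|\ell(\gamma_l)\|_1=2^s\prod_{l\neq i}h(\gamma_l)$. To replace the exponential factor $2^s$ by the sub-polynomial $s!e^s/s^s \asymp \sqrt{s}$ of the lemma — which is the real crux — one must instead appeal to the geometry of numbers: choose a basis $\varepsilon_1,\ldots,\varepsilon_s$ of $\langle\gamma_0,\ldots,\gamma_s\rangle/\mu(L)$ realizing the successive minima of the rank-$s$ lattice $\ell(\Gamma)\subset\Rr^{M_L}$ with respect to the Euclidean ball, and write $\gamma_i=\prod_j\varepsilon_j^{a_{ij}}\zeta_i$. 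The $k_i$ are then $s\times s$ subdeterminants of $(a_{ij})$, and Minkowski's second theorem — whose constant involves the reciprocal of the volume of the Euclidean unit ball, namely $\Gamma(s/2+1)/\pi^{s/2}$ — supplies exactly the $s!e^s/s^s$ factor after Stirling. The remaining ingredients are standard: the torsion bound $\varphi(w)\leq d$ gives $w=O(d\log d)$, contributing the $d\log d$ factor, while the normalizations $d_v$ in the logarithmic embedding together with the passage from $\ell(\Gamma)$ to the unnormalized height account for the extra $d^s$. The main technical difficulty lies precisely in this Minkowski-theoretic refinement: optimizing the choice of the convex body and carefully tracking the covolume of $\ell(\Gamma)$ against the heights of the individual $\gamma_i$, so that the product $\prod h(\gamma_j)/h(\gamma_i)$ (rather than a worse expression) appears on the right.
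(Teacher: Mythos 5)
The paper does not prove this lemma at all: it simply cites Corollary 3.2 of Loher and Masser (2004), attributing the result to Yu Kunrui, and points to Loxton--van der Poorten (1983) for an earlier result of the same type. So there is no argument in the paper against which to compare your work; what I can do is evaluate your attempt as a free-standing proof.

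There is a genuine error in your ``producing the relation'' step. You set $k_i^* := (-1)^i\det(M_i)$, where $M$ is an $s\times(s+1)$ matrix whose entries $\lambda_{v_j}(\gamma_i)=d_{v_j}\log|\gamma_i|_{v_j}$ are real numbers (typically irrational). Hence $k_i^*\in\Rr$, not $\Zz$, and the expressions $\prod_i\gamma_i^{k_i^*}$ and $\prod_i\gamma_i^{wk_i^*}$ are simply not defined; you cannot raise an algebraic number to a real power and speak of roots of unity. What is true is that $(k_0^*,\ldots,k_s^*)$ spans the same $1$-dimensional relation space as the primitive integer vector $(m_0,\ldots,m_s)$ you are actually after, but that proportionality $(k_i^*)=c\,(m_i)$ with unknown $c\in\Rr^*$ destroys any transfer of your Hadamard bound on $|k_i^*|$ to a bound on $|m_i|$: $c$ could be arbitrarily small.

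Your second idea --- represent $\ell(\Gamma)$ by a basis realizing successive minima, write $\gamma_i=\zeta_i\prod_j\varepsilon_j^{a_{ij}}$ with integer $a_{ij}$, and take $k_i$ to be the $s\times s$ minors of $(a_{ij})$ --- correctly produces integer coefficients, and is in the spirit of the Yu/Loher--Masser argument. But at that point the proof is only an outline: you explicitly defer the Minkowski-theoretic step that produces the precise factor $s!e^s/s^s$, the covolume comparison against $\prod_j h(\gamma_j)/h(\gamma_i)$, and the accounting of the $d^{s+1}\log d$ factor. (Your attribution of $d\log d$ to the torsion order is also dubious: $\varphi(w)\leq d$ gives $w\ll d\log\log d$, not $d\log d$, so the $\log d$ presumably comes from elsewhere.) As it stands, the argument is a plausible strategy toward the cited result, not a proof of it; for the purposes of the paper one should, as the authors do, simply invoke Loher--Masser, Cor.~3.2.
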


\begin{proof} This is Corollary 3.2 of Loher and Masser \cite[2004]{LoMa04}.
They attribute this result to Yu Kunrui. Another result of this type was obtained earlier by
Loxton and van der Poorten \cite[1983]{LvdP83}.
\end{proof}

We prove a generalization for arbitrary finitely generated domains.
As before, let $A=\Zz [z_1\kdots z_r]\supseteq \Zz$ be a domain,
and suppose that the ideal $I$ of polynomials $f\in\Zz [X_1\kdots X_r]$
with $f(z_1\kdots z_r)=0$ is generated by $f_1\kdots f_m$.
Let $K$ be the quotient field of $A$. Let $\gamma_0\kdots\gamma_s$ be non-zero
elements of $K$, and for $i=1\kdots s$, let $(g_{i1},g_{i2})$ be a pair of representatives for $\gamma_i$,
i.e., elements of $\Zz [X_1\kdots X_r]$
such that 
\[
\gamma_i =\frac{g_{i1}(z_1\kdots z_r)}{g_{i2}(z_1\kdots z_r)}.
\]

\begin{lemma}\label{le:7.2}
Assume that $\gamma_0\kdots\gamma_s$ are multiplicatively dependent.
Further, assume that $f_1\kdots f_m$ and $g_{i1},g_{i2}$ ($i=0\kdots s$)
have degrees at most $d$ and logarithmic heights at most $h$, where $d\geq 1$, $h\geq 1$.
Then there are 
integers $k_0\kdots k_s$, not all equal to $0$, such that
\begin{eqnarray}
\label{7.2}
&& \gamma_0^{k_0}\cdots \gamma_s^{k_s}=1,
\\[0.15cm]
\label{7.3}
&& |k_i|\leq (2d)^{\exp O(r+s)}(h+1)^s\ \ \mbox{for } i=0\kdots s.
\end{eqnarray}
\end{lemma}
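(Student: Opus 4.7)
The plan is to reduce to the number field setting via the specialization machinery of Section \ref{5} and then apply Lemma \ref{le:7.1}. I proceed by induction on $s$. If some $s$-element subset of $\gamma_0\kdots\gamma_s$ is already multiplicatively dependent, the induction hypothesis applied to this subset yields a short relation satisfying an even stronger bound, which I extend by zero on the remaining coordinate. The base case $s=0$ reduces to bounding the order of a root of unity in $K$, which can be handled by a direct application of the specialization arguments from Sections \ref{5}--\ref{6}. Hence I may assume that the relation lattice $\Lambda\subset\Zz^{s+1}$ has rank exactly one, generated primitively by some $(l_0\kdots l_s)$ with all $l_i\neq 0$; the task becomes bounding $\max_i|l_i|$.

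Next, I would invoke the reduction from Section \ref{3}. Applying Lemma \ref{le:3.4a} to $\alpha_1\kdots\alpha_{s+1}:=\gamma_0\kdots\gamma_s$ produces $y\in A$ integral over $A_0:=\Zz[z_1\kdots z_q]$ and $f\in A_0\setminus\{0\}$ with $A\subseteq B:=A_0[y,f^{-1}]$, $\gamma_0\kdots\gamma_s\in B^*$, together with bounds of the form $(2d)^{\expr}(h+1)$ on $\deg f$, $h(f)$ and, via Corollary \ref{co:3.3}, on the degrees and heights of the coefficients $\FF_i$ of the minimal polynomial of $y$ over $K_0$. Lemma \ref{le:3.4} yields similar bounds on $\Deg\gamma_i$ and $\Height\gamma_i$. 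We are then in the setup of Section \ref{5} with parameters $d_1,h_1$ of this order.

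The heart of the argument is specialization. For $\uv\in\Zz^q$ with $\HH(\uv)\neq 0$ and $j\in\{1\kdots D\}$, set $\delta_i:=\irS_{\uv,j}(\gamma_i)$. These lie in a number field $L=K_{\uv,j}$ of degree at most $D$ and, since $\gamma_i\in B^*$, in the unit group $O_S^*$ for the set of places $S$ described in the proof of \eqref{3.14}. Since $\prod_i\gamma_i^{l_i}=1$, the same relation holds for the $\delta_i$. If I can arrange that every $s$-element subset of $\{\delta_i\}$ is multiplicatively independent in $L^*$, then the relation lattice of $\{\delta_i\}$ in $L^*$ also has rank one. Lemma \ref{le:7.1} then furnishes a nonzero $(k_0\kdots k_s)$ in this lattice with $|k_i|$ bounded by a polynomial in $d_L$, $\log d_L$, and $\prod_j h(\delta_j)$; combined with the upper bound $h(\delta_j)\leq (2d)^{\expr}(h+1)\log^*|\uv|$ supplied by Lemma \ref{le:5.5}, this yields a bound of the target shape. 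Since $(l_0\kdots l_s)$ generates the lattice primitively, $(k_0\kdots k_s)$ is an integer multiple of it, so $|l_i|\leq|k_i|$.

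The main obstacle is securing a \emph{generic} $\uv$: I must avoid values for which some $s$-element subset of $\{\delta_i\}$ becomes multiplicatively dependent while the corresponding $s$-subset of $\{\gamma_i\}$ is independent. Any such spurious dependence corresponds to a tuple $(a_0\kdots a_s)\in\Zz^{s+1}\setminus\Lambda$, with at least one $a_i=0$, for which $\beta:=\prod_i\gamma_i^{a_i}\neq 1$ in $K^*$ while $\irS_{\uv,j}(\beta)=1$. Writing $\beta-1$ as $Q^{-1}\sum_{\ell=0}^{D-1}P_\ell y^\ell$ with $P_\ell,Q\in A_0$, not all $P_\ell$ zero, this condition forces $\uv$ to vanish on a nonzero polynomial in $A_0$ built from the $P_\ell$ and the $\FF_i$, of degree controlled by $\max_i|a_i|$ and $d_1$. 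I would fix $N$ of the form $(2d)^{\exp O(r+s)}(h+1)^s$ and restrict to $|\uv|\leq N$: the resulting bound $M$ arising from Lemma \ref{le:7.1} is $\exp O(D\log^* N)$; the number of spurious tuples to exclude is $O(M^{s+1})$; and by Lemma \ref{le:5.3} the total number of bad $\uv$ in the box is strictly smaller than $(2N+1)^q$ for $N$ of the above size. Hence a good $\uv$ exists, and the argument concludes with $|l_i|\leq M\leq (2d)^{\exp O(r+s)}(h+1)^s$. Balancing $N$ against $M$ so as to actually exhibit a good $\uv$ within a box of the stated size is the delicate technical point.
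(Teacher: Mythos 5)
Your proposal takes a genuinely different route from the paper, though both hinge on the same two ingredients: the specialization machinery of Sections \ref{3}--\ref{5} and the Loher--Masser bound of Lemma \ref{le:7.1}. The paper first reduces (by passing to a minimal dependent subset) to the case where any $s$ of the $\gamma_i$ are independent, and then handles the spurious-dependence problem by \emph{construction}: it defines $f$ via Lemma \ref{le:3.4a} so that every element $\gamma_\vv=\prod_i\gamma_i^{v_i}-1$, for $\vv$ in a box of radius $V$ with a zero coordinate, is a \emph{unit} of $B=A_0[y,f^{-1}]$. Any $\uv$ with $\HH(\uv)\ne 0$ then kills all spurious relations of magnitude up to $V$ at once, and the argument proceeds by contradiction: a spurious $s$-element dependence among the $\gamma_i(\uv)$ would produce, via Lemma \ref{le:7.1}, a relation smaller than $V$, which cannot exist. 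You instead fix a cheaper $f$ and propose to locate a good $\uv$ by an explicit counting/genericity argument with Lemma \ref{le:5.3}, excluding for each potential spurious tuple $(a_0\kdots a_s)$ the zero locus of a suitable norm/resultant polynomial. The two differ again at the end: the paper transports the small relation back to $K$ via the independence of $\gamma_1(\uv)\kdots\gamma_s(\uv)$ and a root-of-unity argument; you reduce at the outset to a rank-one relation lattice with primitive generator and use that the number-field relation must be an integer multiple of it.

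Your plan is workable and leads to a bound of the stated quality, but I would flag two places where it needs to be tightened. First, when you write $\beta-1=Q^{-1}\sum P_\ell y^\ell$ and assert the excluding polynomial has degree ``controlled by $\max_i|a_i|$ and $d_1$'', you need the observation (implicit in the paper's phrase that $\gamma_\vv$ has representatives of degree $\le sdV$) that the \emph{representing pair} of $\prod\gamma_i^{a_i}$ has degree only linear in $\sum_i|a_i|$, and that the subsequent application of Lemma \ref{le:3.4}/Corollary \ref{co:2.3} then gives $\Deg(\beta-1)\le(2(s{+}1)dM)^{\expr}$, i.e.\ polynomial in $M$ of exponent $\expr$. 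This is exactly what makes the counting viable; a less careful bound would lose control. Second, the balance between $N$ and $M$ is indeed the delicate point you flag: the two-sided coupling $M\lesssim(2d)^{\exp O(r+s)}(1+h+\log^*N)^s$ and $N\gtrsim M^{\exp O(r+s)}$ does close, by a short case distinction on whether $h$ or $\log^*N$ dominates, to give $M\le(2d)^{\exp O(r+s)}(h+1)^s$, but this needs to be carried out rather than asserted. Also, your base case $s=0$ (bounding the order of roots of unity in $K$) follows already from $\varphi(\mathrm{ord})\le D\le d^t$ and does not actually require specialization.
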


\begin{proof}
We assume without loss of generality that any $s$ numbers among
$\gamma_0\kdots\gamma_s$ are multiplicatively independent
(if this is not the case, take a
minimal multiplicatively dependent subset of $\{ \gamma_0\kdots \gamma_s\}$
and proceed further with this subset).
We first assume that $q>0$. 
We use an argument of van der Poorten and Schlickewei \cite[1991]{vdPSchl91}.
We keep the notation and assumptions from Sections \ref{3}--\ref{5}.
In particular, we assume that $z_1\kdots z_q$ is a transcendence basis of $K$,
and rename $z_{q+1}\kdots z_r$ as $y_1\kdots y_t$, respectively.
For brevity, we have included the case $t=0$ as well in our proof.
But it should be possible to prove in this case a sharper result 
by means of a more elementary method.
In the case $t>0$, $y$ and $\FF =X^D+\FF_1X^{D-1}+\cdots +\FF_D$ will be as
in Corollary \ref{co:3.3}. In the case $t=0$ we take $m=1$, $f_1=0$, $d=h=1$,
$y=1$, $\FF =X-1$, $D=1$.
We construct a specialization such that among the images of $\gamma_0\kdots\gamma_s$
no $s$ elements are multiplicatively dependent, and then apply Lemma \ref{le:7.1}.
 
Let $V\geq 2d$ be a positive integer. Later we shall make our choice of $V$ more precise.
Let 
\begin{eqnarray}\label{7.3a}
&&\VV :=\{ \vv =(v_0\kdots v_s)\in\Zz^{s+1}\setminus\{ \nullv\}:
\\[0.1cm]
\nonumber
&&\hspace*{2cm}
|v_i|\leq V\ \mbox{for } i=0\kdots s, \mbox{and with $v_i=0$ for some $i$}\}.
\end{eqnarray}
Then
\[
\gamma_{\vv}:= \Big(\prod_{i=0}^s\gamma_i^{v_i}\Big)-1\ \ (\vv\in\VV )
\]
are non-zero elements of $K$.  
It is not difficult to show that for $\vv\in\VV$,
$\gamma_{\vv}$ has a pair of representatives $(g_{1,\vv},g_{2,\vv})$ such that
\[
\deg g_{1,\vv},\, \deg g_{2,\vv}\leq sdV.
\]
In the case $t>0$, there exists by Lemma \ref{le:3.4a} a non-zero $f\in A_0$ such that
\[
A\subseteq B:= A_0[y,f^{-1}],\ \ \gamma_{\vv}\in B^*\ \mbox{for $\vv\in\VV$}
\]
and 
\[
\deg f\leq V^{s+1}(2sdV)^{\expr}\leq V^{\exp O(r+s)}.
\]
In the case $t=0$ this holds true as well, 
with $y=1$ and  $f=\prod_{\vv\in\VV} ((g_{1,\vv}\cdot g_{2,\vv})$.
We apply the theory on specializations explained in Section \ref{5} with this $f$.
We put $\HH :=\Delta_{\FF}\FF_Df$, where
$\Delta_{\FF}$ is the discriminant of $\FF$. 
Using Corollary \ref{co:3.3} and inserting the bound $D\leq d^t$ from Lemma \ref{le:3.1}
we get for $t>0$,
\begin{equation}\label{7.4}
\left\{\begin{array}{l}
d_0:=\max (\deg f_1\kdots\deg f_m,\deg\FF_1\kdots\deg\FF_D)\leq (2d)^{\expr} ,
\\[0.15cm]
h_0:=\max \big(h(f_1)\kdots h(f_m),h(\FF_1)\kdots h(\FF_D)\,\big)\leq (2d)^{\expr}(h+1)\, ;
\end{array}\right.
\end{equation}
with the provision $\deg 0 = h(0)= -\infty$ this is true also if $t=0$.
Combining this with Lemma \ref{le:3.4}, we obtain
\[
\deg\HH \leq (2D-1)d_0 +\deg f \leq  V^{\exp O(r+s)}.
\]
By Lemma \ref{le:5.3} there exists $\uv\in\Zz^q$ with
\begin{equation}\label{7.5}
\HH (\uv )\not =0,\ \ |\uv |\leq V^{\exp O(r+s)}.
\end{equation}
We proceed further with this $\uv$.

As we have seen before, $\gamma_{\vv}\in B^*$ for $\vv\in\VV$.
By our choice of $\uv$, 
there are $D$ distinct specialization maps $\irS_{\uv ,j}$ ($j=1\kdots D)$
from $B$ to $\OQq$. We fix one of these specializations, say $\irS_{\uv}$.
Given $\alpha\in B$, we write $\alpha (\uv )$ for $\irS_{\uv}(\alpha )$.
As the elements $\gamma_{\vv}$ are all units in $B$, their images under $\irS_{\uv}$ are non-zero.
So we have
\begin{equation}\label{7.5a}
\prod_{i=0}^s \gamma_i(\uv )^{v_i}\not=1\ \ \mbox{for } \vv\in\VV,
\end{equation}
where $\VV$ is defined by \eqref{7.3a}.

We use Lemma \ref{le:5.5} to estimate the heights $h(\gamma_i(\uv ))$ for $i=0\kdots s$.
Recall that by Lemma \ref{le:3.4} we have
\[ 
\Deg \gamma_i\leq (2d)^{\expr},\ \  \Height (\gamma_i)\leq (2d)^{\expr}(h+1)
\]
for $i=0\kdots s$.
By inserting these bounds, together with the bound $D\leq d^t$ from Lemma \ref{le:3.1},
those for $d_0,h_0$ from \eqref{7.4} and that for $\uv$
from \eqref{7.5} into the bound from Lemma \ref{le:5.5}, we obtain for $i=0\kdots s$,
\begin{eqnarray}\label{7.6}
h(\gamma_i(\uv ))&\leq& (2d)^{\expr}(1+h+\log\max (1,|\uv |))
\\[0.15cm]
\nonumber
&\leq& (2d)^{\exp O(r+s)}(1+h+\log V).
\end{eqnarray}

Assume that among $\gamma_0(\uv )\kdots \gamma_s(\uv )$ there are $s$ numbers
which are multiplicatively dependent. 
By Lemma \ref{le:7.1} there are integers $k_0\kdots k_s$, 
at least one of which is non-zero and at least one of which is $0$,
such that
\begin{eqnarray*}
&&\prod_{i=0}^s \gamma_i(\uv )^{k_i}=0,
\\
&&|k_i|\leq (2d)^{\exp O(r+s)}(1+h+\log V)^{s-1}\ \ \mbox{for } i=0\kdots s.
\end{eqnarray*}
Now for 
\begin{equation}\label{7.7}
V=(2d)^{\exp O(r+s)}(h+1)^{s-1}
\end{equation}
(with a sufficiently large constant in the O-symbol),
the upper bound for the numbers $|k_i|$ is smaller than $V$. But this would imply that
$\prod_{i=0}^s \gamma_i(\uv )^{v_i}=1$ for some $\vv\in\VV$, contrary to \eqref{7.5a}.
Thus we conclude that with the choice \eqref{7.7} for $V$, there exists $\uv\in\Zz^q$ with \eqref{7.5},
such that any $s$ numbers among $\gamma_0(\uv )\kdots\gamma_s (\uv )$ are multiplicatively
independent. Of course, the numbers $\gamma_0(\uv )\kdots\gamma_s(\uv )$ are multiplicatively dependent,
since they are the images under $\irS_{\uv}$ of $\gamma_0\kdots\gamma_s$ which are
multiplicatively dependent. Substituting \eqref{7.7} into \eqref{7.6} we obtain
\begin{equation}\label{7.7a}
h(\gamma_i(\uv ))\leq (2d)^{\exp O(r+s)}(h+1)\ \ \mbox{for } i=0\kdots s.
\end{equation}
Now Lemma \ref{le:7.1} implies that there are non-zero integers $k_0\kdots k_s$ such that
\begin{eqnarray}\label{7.8}
&&\prod_{i=0}^s \gamma_i(\uv )^{k_i}=1,
\\[0.15cm]
\label{7.9}
&&|k_i|\leq (2d)^{\exp O(r+s)}(h+1)^s\ \ \mbox{for } i=0\kdots s.
\end{eqnarray}

Our assumption on $\gamma_0\kdots \gamma_s$ implies that there are non-zero integers
$l_0\kdots l_s$ such that $\prod_{i=0}^s \gamma_i^{l_i}=1$. Hence
$\prod_{i=0}^s \gamma_i(\uv )^{l_i}=1$. Together with \eqref{7.8} this implies
\[
\prod_{i=1}^s \gamma_i(\uv )^{l_0k_i-l_ik_0}=1.
\]
But $\gamma_1(\uv )\kdots \gamma_s(\uv )$ are multiplicatively independent, hence
$l_0k_i-l_ik_0=0$ for $i=1\kdots s$. That is,
\[
l_0(k_0\kdots k_s)=k_0(l_0\kdots l_s).
\]
It follows that 
\[
\prod_{i=0}^s \gamma_i^{k_i}=\rho
\]
for some root of unity $\rho$. But $\irS_{\uv}(\rho )=1$ and it is conjugate to $\rho$.
Hence $\rho =1$. So in fact we have
$\prod_{i=0}^s \gamma_i^{k_i}=1$ with non-zero integers $k_i$ satisfying \eqref{7.9}.
This proves our Lemma, but under the assumption $q>0$. If $q=0$ then a much
simpler argument, without specializations, 
gives $h(\gamma_i)\leq (2d)^{\exp O(r+s)}(h+1)$ for $i=0\kdots s$ instead of \eqref{7.7a}.
Then the proof is finished in the same way as in the case $q>0$.
\end{proof}

\begin{corollary}\label{co:7.3}
Let $\gamma_0,\gamma_1\kdots\gamma_s\in K^*$,
and suppose that 
$\gamma_1\kdots\gamma_s$ are multiplicatively independent
and 
\[
\gamma_0=\gamma_1^{k_1}\cdots\gamma_s^{k_s}
\]
for certain integers $k_1\kdots k_s$. Then
\[
|k_i|\leq (2d)^{\exp O(r+s)}(h+1)^s\ \ \ \mbox{for $i=1\kdots s$.}
\]
\end{corollary}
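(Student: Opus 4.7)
The plan is to derive Corollary \ref{co:7.3} directly from Lemma \ref{le:7.2} applied to the $s+1$ elements $\gamma_0,\gamma_1\kdots\gamma_s$, combined with the hypothesized multiplicative independence of $\gamma_1\kdots\gamma_s$.

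First, I would observe that $\gamma_0\kdots\gamma_s$ are multiplicatively dependent, since the relation $\gamma_0\cdot\gamma_1^{-k_1}\cdots\gamma_s^{-k_s}=1$ is a nontrivial multiplicative dependence (note that this is nontrivial because the exponent of $\gamma_0$ equals $1$). Hence Lemma \ref{le:7.2} applies and yields integers $\ell_0\kdots\ell_s$, not all zero, with
\[
\gamma_0^{\ell_0}\gamma_1^{\ell_1}\cdots\gamma_s^{\ell_s}=1
\ \ \mbox{and}\ \ |\ell_i|\leq (2d)^{\exp O(r+s)}(h+1)^s\ \mbox{for } i=0\kdots s.
\]

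Next, I would show that $\ell_0\not=0$. Indeed, if $\ell_0=0$, then $\gamma_1^{\ell_1}\cdots\gamma_s^{\ell_s}=1$ with not all $\ell_i$ zero (by the ``not all zero'' clause of Lemma \ref{le:7.2}), contradicting the multiplicative independence of $\gamma_1\kdots\gamma_s$. Substituting $\gamma_0=\gamma_1^{k_1}\cdots\gamma_s^{k_s}$ into the relation above gives
\[
\gamma_1^{\ell_0 k_1+\ell_1}\cdots \gamma_s^{\ell_0 k_s+\ell_s}=1,
\]
and another appeal to the multiplicative independence of $\gamma_1\kdots\gamma_s$ forces $\ell_0 k_i+\ell_i=0$, i.e., $k_i=-\ell_i/\ell_0$, for $i=1\kdots s$.

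Since $\ell_0$ is a nonzero integer, $|\ell_0|\geq 1$, so the inequality $|k_i|\leq |\ell_i|\leq (2d)^{\exp O(r+s)}(h+1)^s$ follows at once. There is essentially no obstacle here: the corollary is a clean linear-algebra consequence of Lemma \ref{le:7.2}, and the only point requiring care is verifying that the dependence relation produced by Lemma \ref{le:7.2} has a nonzero exponent on $\gamma_0$, which as shown follows immediately from the independence hypothesis on $\gamma_1\kdots\gamma_s$.
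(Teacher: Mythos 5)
Your proof is correct and follows essentially the same route as the paper: apply Lemma \ref{le:7.2} to $\gamma_0\kdots\gamma_s$, use the multiplicative independence of $\gamma_1\kdots\gamma_s$ to force $\ell_0\not=0$, substitute the given expression for $\gamma_0$, and read off $k_i=-\ell_i/\ell_0$ so that $|k_i|\leq|\ell_i|$. You have in fact spelled out the justification for $\ell_0\not=0$ a bit more explicitly than the paper does, and your sign in the resulting relation $\ell_0 k_i+\ell_i=0$ is the correct one (the paper has a harmless sign slip at this point).
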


\begin{proof}
By Lemma \ref{le:7.2}, and by the multiplicative independence of $\gamma_1\kdots\gamma_s$,
there are integers $l_0\kdots l_m$ such that
\begin{eqnarray*}
&&\prod_{i=0}^m \gamma_i^{l_i}=1,
\\[0.15cm]
&&l_0\not= 0,\ \ |l_i|\leq (2d)^{\exp O(r+s)}(h+1)^s\ \mbox{for $i=0\kdots s$.}
\end{eqnarray*}
Now clearly, we have also
\[
\prod_{i=1}^s \gamma_i^{l_0k_i-l_i}=1,
\]
hence $l_0k_i-l_i=0$ for $i=1\kdots s$. It follows that $|k_i|=|l_i/l_0|\leq (2d)^{\exp O(r+s)}(h+1)^s$
for $i=1\kdots s$. This implies our Corollary.
\end{proof}

\begin{proof}[Proof of Theorem \ref{th:1.3}]
We keep the notation and assumptions from the statement of Theorem \ref{th:1.3}.
Define the domain 
\[
\widetilde{A}:= A[\gamma_1,\gamma_1^{-1}\kdots \gamma_s,\gamma_s^{-1}].
\]
Then
\[
\widetilde{A}\cong \Zz [X_1\kdots X_r,X_{r+1}\kdots X_{r+2s}]/\widetilde{I}
\]
with
\begin{eqnarray*}
\widetilde{I}&=&\Big(f_1\kdots f_m,g_{12}X_{r+1}-g_{11},g_{11}X_{r+2}-g_{12},\ldots
\\
&&\hspace*{2cm} \ldots , g_{s2}X_{r+2s-1}-g_{s1},g_{s1}X_{r+2s}-g_{s2}\Big).
\end{eqnarray*}
Let $(v_1\kdots w_s)$ be a solution of \eqref{1.4}, and put $\ve :=\prod_{i=1}^s \gamma_i^{v_i}$,
$\eta := \prod_{i=1}^s \gamma_i^{w_i}$. Then
\[
a\ve +b\eta =c,\ \ \ \ve ,\eta\in \widetilde{A}^* .
\]
By Theorem \ref{th:1.1}, $\ve$ has a representative $\widetilde{\ve}\in\Zz [X_1\kdots X_{r+2s}]$ of degree
and logarithmic height both bounded above by
\[
\exp\Big( (2d)^{\exp O(r+s)}(h+1)\Big).
\]
Now Corollary \ref{co:7.3} implies
\[
|v_i|\leq \exp\Big( (2d)^{\exp O(r+s)}(h+1)\Big )\ \ \mbox{for } i=1\kdots s.
\]
For $|w_i|$ ($i=1\kdots s$) we derive a similar upper bound.
This completes the proof of Theorem \ref{th:1.3}.
\end{proof}

\end{document}